\documentclass{article}

% Language setting
% Replace `english' with e.g. `spanish' to change the document language
\usepackage[english]{babel}

% Set page size and margins
% Replace `letterpaper' with`a4paper' for UK/EU standard size
%\usepackage[letterpaper,top=2cm,bottom=2cm,left=3cm,right=3cm,marginparwidth=1.%75cm]{geometry}

% Useful packages
\usepackage{amsmath}
\usepackage{graphicx}
\usepackage[colorlinks=true, allcolors=blue]{hyperref}

%\title{Your Paper}
%\author{You}
%
%\begin{document}
%\maketitle
%
%\begin{abstract}
%Your abstract.
%\end{abstract}
\usepackage{xcolor}

\newtheorem{theorem}{Theorem}
\newtheorem{lemma}{Lemma}
\newtheorem{definition}{Definition} 
\newtheorem{corollary}{Corollary}

\newtheorem{alg}{Algorithm}
\newtheorem{remark}{Remark} 
\newtheorem{proposition}{Proposition} 
\newtheorem{example}{Example}
\newenvironment{proof}{\paragraph{Proof:}}{\hfill$\square$}
\usepackage[colorinlistoftodos]{todonotes}
\usepackage[latin9]{inputenc}
\usepackage{graphicx}
\usepackage{babel}
\usepackage[a4paper]{geometry}

\usepackage[matrix,arrow,curve,cmtip]{xy}

\usepackage{txfonts}
\DeclareMathAlphabet{\mathcal}{OMS}{cmsy}{m}{n} % but do not change mathcal symbols

\usepackage{
  amsmath,  % Contains many useful mathematical typesetting tools.
  amssymb,  % Contains many useful mathematical symbols.
%  amsthm,   % Contains tools for theorem-like environments
  tikz,     % A versatile tool to draw diagrams
%  fullpage, % More reasonable margins
  youngtab, % To draw young tableaux.
  ytableau, % this is the best young tableaux packege,
%  thmtools, % Lets us define theorem-like environments easily.
  hyperref, % Better references.
%  cite,     % Better citations.
  url       % URL handling
}

%packesges
\usepackage{amscd}
\usepackage{euscript}
\usepackage{latexsym}
\usepackage{keyval}
\usepackage{mathtools}
\usepackage{tikz}
\usetikzlibrary{arrows,shapes,snakes,automata,backgrounds,petri,through,positioning}
\usetikzlibrary{intersections}
\usepackage[symbol*]{footmisc}
\usepackage{verbatim}

\usepackage{mathrsfs}

\usepackage{BOONDOX-frak} % to make mathfrak font more evilly
%Theorem Environments
\usepackage{graphicx}
\newtheorem{construction}[theorem]{Construction}

%====== THE FOLLOWING PACKAGE AND CODE ARE FOR TIKZ PICTURES ONLY!!! DO NOT DELETE!!! =========================================
\usepackage{tikz}
\usetikzlibrary{intersections,through,backgrounds}
\usetikzlibrary{patterns}

    \pgfdeclarepatternformonly{south west lines}{\pgfqpoint{-0pt}{-0pt}}{\pgfqpoint{3pt}{3pt}}{\pgfqpoint{3pt}{3pt}}{
        \pgfsetlinewidth{0.4pt}
        \pgfpathmoveto{\pgfqpoint{0pt}{0pt}}
        \pgfpathlineto{\pgfqpoint{3pt}{3pt}}
        \pgfpathmoveto{\pgfqpoint{2.8pt}{-.2pt}}
        \pgfpathlineto{\pgfqpoint{3.2pt}{.2pt}}
        \pgfpathmoveto{\pgfqpoint{-.2pt}{2.8pt}}
        \pgfpathlineto{\pgfqpoint{.2pt}{3.2pt}}
        \pgfusepath{stroke}}
        
    \pgfdeclarepatternformonly{south east lines}{\pgfqpoint{-0pt}{-0pt}}{\pgfqpoint{3pt}{3pt}}{\pgfqpoint{3pt}{3pt}}{
    \pgfsetlinewidth{0.4pt}
    \pgfpathmoveto{\pgfqpoint{0pt}{3pt}}
    \pgfpathlineto{\pgfqpoint{3pt}{0pt}}
    \pgfpathmoveto{\pgfqpoint{.2pt}{-.2pt}}
    \pgfpathlineto{\pgfqpoint{-.2pt}{.2pt}}
    \pgfpathmoveto{\pgfqpoint{3.2pt}{2.8pt}}
    \pgfpathlineto{\pgfqpoint{2.8pt}{3.2pt}}
    \pgfusepath{stroke}}
%===============================================================================================================================

\begin{document}

\markboth{Authors' Names}{Instructions for Typesetting Manuscripts using \LaTeX}

%%%%%%%%%%%%%%%%%%%%% Publisher's Area please ignore %%%%%%%%%%%%%%
%\catchline{}{}{}{}{}
%%%%%%%%%%%%%%%%%%%%%%%%%%%%%%%%%%%%%%%%%%%%%%%%%%%%%%%%%%%%%%%%%%%

\title{Circle graphs (chord interlacement graphs) of Gauss diagrams: Descriptions of realizable Gauss diagrams, algorithms, enumeration
}

\author{Abdullah~Khan \\
%Department of Mathematical Sciences\\
University of Essex, UK \\ak20749@essex.ac.uk  
\and 
Alexei~Lisitsa\\
%\footnote{corresponding author}\\
%Department of Computer Sciences \\
University of Liverpool, UK\\ a.lisitsa@liverpool.ac.uk  
\and  
Viktor~Lopatkin\\
%Laboratory of Modern Algebra and Applications\\ 
HSE University, Moscow, Russia \\
wickktor@gmail.com
\and  
Alexei~Vernitski\\ 
%Department of Mathematical Sciences\\
University of Essex, UK\\asvern@essex.ac.uk}

%\address{Full affiliations\footnote{Typeset
%affiliation and mailing addresses in 8pt italic.} \\
%and mailing addresses}

%\author{Other D. Author}
%
%\address{Full affiliations \\
%and mailing addresses}

\maketitle
\begin{abstract}
Chord diagrams, under the name of Gauss diagrams, are used in low-dimensional topology as an important tool for studying curves or knots. Those Gauss diagrams that correspond to curves or knots are called realizable. The theme of our paper is the fact that realizability of a Gauss diagram can be expressed via its circle graph. Accordingly, one can define and study realizable circle graphs (with realizability of a circle graph understood as realizability of any one of chord diagrams corresponding to the graph). Several studies contain theorems purporting to prove the fact. We check several of these descriptions experimentally and find counterexamples to the descriptions of realizable Gauss diagrams in some of these publications. We formulate new descriptions of realizable circle graphs and present an elegant algorithm for checking if a circle graph is realizable. We enumerate realizable circle graphs for small sizes and comment on these numbers. Then we concentrate on one type of curves, called meanders, and study the circle graphs of their Gauss diagrams.

%A Gauss diagram (or, more generally, a chord diagram) consists of a circle and some chords inside it. 
%Gauss diagrams are a well-established tool in the study of topology of knots and of planar and spherical curves. Not every Gauss diagram corresponds to a knot (or an immersed curve); if it does, it is called realizable. A classical question of computational topology asked by Gauss himself is which chords diagrams are realizable. An answer was first discovered  in  the 1930s by Dehn, and since then many efficient algorithms for checking realizability of Gauss diagrams have been developed. Recent studies in \cite{grinblat2018realizability,doi:10.1142/S0218216520500315} and \cite{doi:10.1142/S0218216519500159} formulated especially simple conditions related to realizability which are expressible in terms of parity of chords intersections. The simple form of these conditions opens an opportunity for experimental investigation of Gauss diagrams  using constraint satisfaction and related techniques. In this paper  we report on our experiments with Gauss diagrams of small sizes (up to 11 chords) using implementations of these conditions and other algorithms 
%in constraint satisfaction language MiniZinc and in logic programming language Prolog. In particular, we found a series of counterexamples showing that that realizability criteria 
%from [Biryukov, JKTR, Vol 28, No 1, 2019] in \cite{grinblat2018realizability,doi:10.1142/S0218216520500315} and \cite{doi:10.1142/S0218216519500159} are  not completely correct. 
\end{abstract}

{\small \;\;\;\; {\bf keywords:} Gauss diagrams, realiziability criteria, circle graphs, interlacement graphs}

\section{Introduction}

Consider a closed planar curve which, possibly, crosses itself at some points; see Figure \ref{fig:gauss-ex}(a) for an example. Build a graph in which crossings are vertices, and there is [there isn't] an edge between two crossings $c$, $d$ if respectively it isn't [it is] possible to travel along the curve from $c$ to $c$ without passing through $d$; the graph corresponding to the planar curve in Figure \ref{fig:gauss-ex}(a) is shown in Figure \ref{fig:gauss-ex}(c). If a graph can be produced in this way on the basis of some closed planar curve, we will say that the graph is \emph{realizable}. 
The standard objects which are widely studied and which are related to realizable graphs are realizable Gauss words and realizable Gauss diagrams; let us define these concepts. Consider again a closed planar curve. Choose a point on the curve to start from, and then travel along the curve all the way until you come back to where you started, recording the crossings you encounter. For example, if you start at the top of the curve in Figure \ref{fig:gauss-ex}(a) and travel along the curve to the right initially, you encounter the crossings in the order $34124123$; this is a word corresponding to this curve. Obviously, there are many words corresponding to a curve, depending on the point where you start and the direction in which you travel along the curve. 

It is easy to notice that a word corresponding to a curve contains each letter exactly twice. This is why a \emph{Gauss word} is defined as a word which contains each letter exactly twice. A Gauss word is called \emph{realizable} if it corresponds to some planar closed curve.

A \emph{chord diagram} is a convenient visual representation of a Gauss word; it consists of a Gauss word written around a circle, with chords inside the circle connecting the two occurrences of each of the letters. For example, the chord diagram of the Gauss word $34124123$ is shown in Figure \ref{fig:gauss-ex}(b). It is easy to see that, conveniently, all Gauss words corresponding to the same curve share the same chord diagram. A chord diagram is called \emph{realizable} if it corresponds to some planar closed curve. 

A \emph{Gauss diagram} is defined in literature either as a chord diagram or as a chord diagram having one additional property (the first parity condition, introduced below in Proposition \ref{prop:first-parity}). It should not cause confusion, but where it might, we will state explicitly whether the first parity condition is assumed or not.

One can define a graph corresponding to a chord diagram; this graph is called a \emph{circle graph} by graph theorists or a \emph{chord interlacement graph} by knot theorists. Build a graph in which chords are vertices, and there is an edge between two chords $c$, $d$ if $c$ and $d$ intersect in the chord diagram. For example, the graph shown in Figure \ref{fig:gauss-ex}(c) happens to be the graph corresponding to the Gauss diagram in Figure \ref{fig:gauss-ex}(b). In general, it is easy to see that the graph corresponding to a curve coincides with the graph corresponding to the Gauss diagram of the curve.

An important question which immediately arises is whether the concepts of realizability of graphs and of Gauss diagrams are consistent with each other. The answer is positive, as the following statement shows.

\begin{theorem}
 A graph is realizable if and only if it corresponds to a realizable Gauss diagram. If a graph corresponds to several Gauss diagrams then either all these Gauss diagrams are realizable or none is.
\end{theorem}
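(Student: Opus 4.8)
The first statement is essentially a reformulation of the stated fact that the graph of a curve equals the graph of its Gauss diagram, and I would dispatch it directly from the definitions. For the forward direction, suppose a graph $G$ is realizable, so that $G$ is the graph of some closed planar curve $\gamma$; letting $D$ be the Gauss diagram of $\gamma$, the diagram $D$ is realizable by definition and its circle graph is the graph of $\gamma$, namely $G$, so $G$ corresponds to the realizable Gauss diagram $D$. Conversely, if $G$ corresponds to some realizable Gauss diagram $D$, then $D$ is the Gauss diagram of a curve $\gamma$, and the graph of $\gamma$ coincides with the circle graph of $D$, which is $G$; hence $G$ is realizable. This half requires no new ideas beyond carefully unwinding the definitions.

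The second statement is the substantive one, and the plan is to show that realizability of a chord diagram is determined by its circle graph alone. The difficulty is precisely that a chord diagram carries more information than its circle graph: the graph records only which pairs of chords cross, whereas the diagram records the full cyclic arrangement of the $2n$ endpoints. Two diagrams $D_1, D_2$ with the same circle graph $G$ may therefore be genuinely different curves, so the claim cannot be read off from the first statement, which only guarantees that \emph{some} diagram with circle graph $G$ is realizable. I would first pin down the combinatorial relationship between two chord diagrams sharing a circle graph, using the structure theory of circle graphs (the split decomposition of Cunningham and Bouchet): a prime circle graph admits an essentially unique chord diagram up to reflection, and in general every diagram with circle graph $G$ is assembled from the diagrams of the prime components along the split-decomposition tree. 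Consequently $D_2$ can be reached from $D_1$ by a sequence of elementary moves, each of which is either a reflection of the whole diagram or a reflection of the sub-diagram occupying one side of a split.

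It then remains to check that each elementary move preserves realizability, and this is where I expect the real work to lie. A global reflection is harmless: the mirror image of a planar curve is again a planar curve whose Gauss diagram is the reflected diagram. For a split, the sub-diagram being reflected corresponds to a portion of the curve that can be isolated inside a disk meeting the remainder of the curve along a controlled interface; reflecting that portion inside the disk yields a new planar curve whose Gauss diagram is exactly the reflected one, while leaving all crossings, and hence the circle graph, unchanged. Making this ``sub-curve in a disk'' argument precise, and matching the topological reflection with the combinatorial move at a split, is the main obstacle. An alternative, and in some respects cleaner, route is to invoke any correct purely graph-theoretic characterisation of realizability: since such a criterion refers only to $G$, the diagrams $D_1$ and $D_2$ satisfy it simultaneously and the all-or-nothing conclusion is immediate. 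I would present the direct argument as primary, since producing and validating such a criterion is exactly the task undertaken in the rest of the paper.
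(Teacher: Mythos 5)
Your proposal is correct, but your \emph{primary} argument is not the one the paper uses; the paper's entire proof is what you relegate to the "alternative route." It simply observes that there exist descriptions of realizability of Gauss diagrams worded entirely in terms of the interlacement graph (Rosenstiehl's conditions and the Shtylla--Traldi--Zulli conditions, surveyed in its Section 2), so realizability is a function of the circle graph alone, and both claims of the theorem follow at once. Note that this is not circular in the way you feared: the criteria invoked are pre-existing results from the literature, not the new descriptions developed later in the paper, although your caution has some merit given that the paper's main point is that \emph{some} published criteria (GL and B) are in fact wrong --- the proof leans precisely on the ones (R, STZ) that survive scrutiny. Your primary route --- split decomposition of circle graphs, uniqueness of the chord diagram of a prime circle graph up to reflection, and a ``tangle in a disk'' argument showing each elementary move preserves realizability of the underlying curve --- is a genuinely different, self-contained structural proof. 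It is essentially the mutation theorem that the paper itself cites much later (Chmutov--Duzhin--Mostovoy, Theorem in Section 4.8.5: two chord diagrams have isomorphic circle graphs if and only if they are related by mutations) combined with the geometric fact that a mutation of a realizable diagram is realized by reflecting the corresponding sub-curve inside a disk. What your route buys is independence from the correctness of any published realizability criterion and real structural insight; what it costs is substantial extra work --- in particular, the elementary moves at degenerate split nodes (stars and complete graphs in the decomposition tree) are not only reflections but also permutations of children, so pinning down the move set precisely, or instead citing the mutation theorem, is unavoidable. The paper's route is a one-line deduction given the literature, at the price of resting entirely on external results.
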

\begin{proof}
 The statement follows from the fact that there are descriptions of realizability of Gauss diagrams \cite{rosenstiehl:hal-00259712,10.1007/3-540-63938-1_65,DBLP:journals/dm/ShtyllaTZ09} which can be worded entirely in terms of the corresponding graphs, as discussed in more detail in Section \ref{sec:realisability-descriptions}.
\end{proof}

One necessary condition of realizability, which we call the \emph{first parity condition}, is the following. 

\begin{proposition}[First parity condition] \label{prop:first-parity}
 1) In a realizable graph the degree of each vertex is even.
 \\2) In a realizable Gauss word the distance between the positions of the two occurrences of each letter is odd.
 \\3) In a realizable chord diagram, for each chord, the number of chords intersecting this chord is even.
\end{proposition}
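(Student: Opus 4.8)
The plan is to observe first that the three statements are one and the same fact seen through three different lenses, and then to prove the chord-diagram version, part 3), directly from the geometry of a realizing curve. The degree of a vertex in the circle graph is by definition the number of chords intersecting the corresponding chord, so 1) is merely a restatement of 3). For 2) I would use a counting identity: if a letter $c$ occupies positions $p<q$ in a Gauss word, then among the $q-p-1$ letters strictly between these positions, every other letter $d$ contributes either $2$ occurrences (both occurrences of $d$ lie between $p$ and $q$, and then the chord $d$ does not cross the chord $c$), or $1$ occurrence (exactly one occurrence lies between, and then $d$ crosses $c$), or $0$ occurrences. Hence $q-p-1$ is congruent mod $2$ to the number of chords crossing $c$, so the distance $q-p$ is odd exactly when that number is even. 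Thus all three parts reduce to proving 3).

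To prove 3), I would fix a realizing curve and a crossing $c$. The two passages of the curve through $c$ cut the curve into two arcs, and a chord $d$ crosses the chord $c$ precisely when the two passages through $d$ lie on opposite arcs. I would make this geometric by performing the orientation-coherent smoothing of the crossing at $c$: this replaces the single closed curve by two closed curves $\gamma_1$ and $\gamma_2$, the images of the two arcs reconnected at $c$ without crossing, while leaving every other crossing untouched. Under this smoothing, a crossing $d$ whose two passages lie on opposite arcs becomes exactly a transverse intersection point of $\gamma_1$ with $\gamma_2$, whereas a crossing with both passages on the same arc becomes a self-intersection of $\gamma_1$ or of $\gamma_2$. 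Consequently the number of chords crossing $c$ equals the number of intersection points between $\gamma_1$ and $\gamma_2$.

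It then remains to prove the key topological lemma: two closed curves in the plane, in general position, meet in an even number of points. I would establish this via the mod-$2$ winding number. For a point $x$ off $\gamma_1$, let $w(x)\in\mathbb{Z}/2$ be the parity of the number of times a generic ray from $x$ crosses $\gamma_1$; this value is locally constant and jumps by $1$ exactly when $x$ passes transversally through $\gamma_1$. Traversing the closed curve $\gamma_2$ and tracking $w$, the value must return to its starting value, so the total number of jumps, which equals the number of transverse crossings of $\gamma_2$ with $\gamma_1$, is even. Combined with the previous paragraph, this shows that each chord crosses an even number of others, which is 3).

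The step I expect to be the main obstacle is making the local picture at $c$ fully rigorous: verifying that the orientation-coherent smoothing genuinely separates the two arcs near $c$, so that $c$ itself contributes no spurious intersection and the two resulting pieces are disjoint there, and confirming that it is this resolution which turns opposite-arc crossings into genuine $\gamma_1$--$\gamma_2$ intersections and same-arc crossings into self-intersections. Everything else is either the bookkeeping identity of the first paragraph or the standard even-intersection lemma.
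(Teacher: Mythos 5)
Your proof is correct, but there is nothing in the paper to compare it against step by step: the paper states this proposition without proof, recording it as a classical fact and attributing part (3) to Gauss. What you have reconstructed is essentially the standard classical argument, and it is sound. The reductions are exactly right: the degree of a vertex in the circle graph is by definition the number of chords meeting the corresponding chord, so (1) is a restatement of (3); and your bookkeeping for (2) — among the $q-p-1$ symbols strictly between the two occurrences of $c$, a non-crossing chord contributes $0$ or $2$ and a crossing chord contributes exactly $1$, so $q-p-1$ has the parity of the crossing count — correctly shows that the distance is odd iff that count is even (the cyclic ambiguity in ``distance'' is harmless since the word length $2n$ is even). The core step, the orientation-coherent (Seifert) smoothing at $c$ that turns opposite-arc crossings into $\gamma_1$--$\gamma_2$ intersections and same-arc crossings into self-intersections, followed by the lemma that two transverse closed curves in the plane meet in an even number of points, is precisely the classical proof of Gauss's parity condition. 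Your choice of the mod-$2$ winding-number argument for that lemma is the right one here: $\gamma_1$ and $\gamma_2$ need not be simple curves, so a naive appeal to the Jordan curve theorem would not apply, whereas the parity-of-ray-crossings argument needs no embeddedness. The ``main obstacle'' you flag is routine to dispatch: in a small disc around $c$ the curve consists of two transverse arcs, and the oriented smoothing replaces them by two disjoint embedded arcs compatible with the orientations, so $c$ itself contributes no intersection point and all other crossings are untouched. In short, your proposal supplies a complete proof of a statement the paper leaves unproved.
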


The part (3) of Proposition \ref{prop:first-parity} is due to Gauss; this is why Gauss words and Gauss diagrams are called after him. Gauss also posed the problem \cite{Gauss} asking when a Gauss diagram is realizable. The problem was solved for the first time by Dehn in 1936 \cite{dehn1936}. Since then many efficient criteria and algorithms for checking the realizability of Gauss diagrams 
have been developed
\cite{10.2307/2037443,FRANCIS1969331, DEFRAYSSEIX198129,lovasz1976,rosenstiehl:hal-00259712, rosenstiehl:hal-00259721,DBLP:journals/jal/RosentiehlT84,10.1007/3-540-63938-1_65,DOWKER198319,vena2018topological,ce-pp2-96,cw-prpmg-94}. 
Talking about computational complexity, the realizability of Gauss diagrams or words can be checked in polynomial time 
and even in linear time \cite{DBLP:journals/jal/RosentiehlT84}. Furthermore, as it is shown in \cite{10.1007/978-3-642-21254-3_29}  it can be done with logarithmic space complexity. 

Most of the research on Gauss realiziability has focused on the  important class of chord diagrams called \emph{prime} diagrams, which  can be defined as follows.   One says that a closed planar curve $C$ is a connected sum of two closed planar curves $C_1, C_2$ if $C$ can be produced from $C_1$ and $C_2$ by `cutting' $C_1$ in one place, `cutting' $C_2$ in one place, and then `gluing' the free ends of $C_1$ to the free ends of $C_2$ without introducing (or erasing) any crossings. See an example in Figure \ref{fig:connected-sum}. It is easy to see that a curve can be decomposed into a connected sum if an only if the corresponding graph is disconnected, see Figure \ref{fig:gauss-ex}(c). It is also easy to see that the graph corresponding to a Gauss diagram is disconnected if and only if the set of chords of the Gauss diagram can be decomposed into families so that none of the chords in one family intersect the chords of another family; compare Figure \ref{fig:gauss-ex}(b) and Figure \ref{fig:gauss-ex}(c). A Gauss diagram whose corresponding graph is connected (disconnected) is called prime (composite), respectively.

Denote the one dimensional circle by $S^{1}$ and the two-dimensional euclidean plane by $R^{2}$. Two closed planar curves $\gamma_{1}, \gamma_{2} : S^{1} \rightarrow R^{2}$ are called 
\emph{equivalent}  if there is a homeomorphism 
$h: R^{2} \rightarrow R^{2}$ such that $h\gamma_{1} = \gamma_{2}$. Two Gauss words over the same alphabet are  \emph{isomorphic}  if one can be obtained from the other by cyclic shifts and reversing the word \cite{c-cic-91}. Similarly, the isomorphism of Gauss diagrams can be defined. In general, there are `more' classes of equivalent closed planar curves than classes of isomorphic Gauss diagrams (see examples in \cite{Valette16}); however, there is a one-to-one correspondence when one considers planar curves with prime Gauss diagrams. Indeed, for planar curves $\gamma_{1}$ and $\gamma_{2}$, such that their Gauss diagrams are prime,  both  corresponding  Gauss words and Gauss diagrams are isomorphic if and only if $\gamma_{1}$ and $\gamma_{2}$ are equivalent,  see e.g.  \cite{Soulie2004,chmutov-2006}.

The main goal of the research presented in this paper is the  experimental investigation of several realizability descriptions expressible in terms of the circle interlacement graphs, enumeration of classes of non-equivalent chord diagrams and corresponding graphs up to isomorphisms.

To this end we have implemented two algorithms for chord diagrams generation: one is a \emph{direct} algorithm based on a bijection between odd-even matchings of the set $[0, \ldots 2n-1]$ and   permutations on $n$ elements\cite{khan2021experimental,KLV21-lintel}; another one is a novel and more efficient \emph{incremental} algorithm which builds all chord diagrams of the size $n+1$ based on the  set of such diagrams of the size $n$. 
Using these algorithms, we  enumerated the classes of non-equivalent chord diagrams of sizes up to n=13 and tested different  realizability descriptions expressed in terms of interlacement graphs. We have confirmed experimentally the validity of some of these descriptions, especially STZ from \cite{DBLP:journals/dm/ShtyllaTZ09}; at the same time, we have discovered that recently proposed simple GL \cite{grinblat2018realizability,doi:10.1142/S0218216520500315} and B \cite{doi:10.1142/S0218216519500159} conditions     are wrong. We have found the minimal counterexample of size n=9 for both these descriptions (first reported in \cite{khan2021experimental}), and also enumerated counterexamples for n=10 and 11.  We reflect on the counterexamples and highlight the error in the arguments of \cite{grinblat2018realizability,doi:10.1142/S0218216520500315} and \cite{doi:10.1142/S0218216519500159}. We provide a correction and propose  new complete realizability  criteria. 
Then we enumerate non-isomorphic circle interlacement graphs for all realizable prime chord diagrams up to the size n = 13. We cross-validate and explain these numbers by comparison with existing empirical data on mutant knots. We also enumerate all non-isomorprhic graphs of a special and natural class of  meander diagrams.  We cross-validate our enumeration results with the OEIS repository and other published works. In many cases we have produced new results, and two of our new sequences of numbers have been accepted to the OEIS as A343358 and A338660.

%=====================================================FIGURE 1 ======================================================
\begin{figure}[h!]
    \centering
    \begin{tikzpicture}[scale=0.8]

\draw[line width =2, name path= a](0,0) to [out= 30, in = 30] (-0.5, 2.5);
\draw[line width =2, name path= b](-0.5,2.5) to [out= 210, in = 180] (1.2, -1);
\draw[line width =2, name path= c] (1.2,-1) to [out= 0, in = 0] (-0.5, 3.3);
\draw[line width =2, name path= d] (-0.5,3.3) to [out= 180, in = 160] (-2, -1);
\draw[line width =2, name path= e] (-2,-1) to [out= 340, in = 270] (1.2, 0.5);
\draw[line width =2, name path= f] (1.2,0.5) to [out= 90, in = 0] (-1.2, 1.5);
\draw[line width =2, name path= g] (-1.2,1.5) to [out = 180, in = 90] (-2, 0.75) to [out= 270, in = 210] (0, 0);

 %intersections
 \fill [name intersections={of=f and b, by={1}}]
(1) circle (3pt) node[above left] {$1$};

 \fill [name intersections={of=a and f, by={2}}]
(2) circle (3pt) node[above right] {$2$};

 \fill [name intersections={of=e and b, by={3}}]
(3) circle (3pt) node[above] {$3$};

 \fill [name intersections={of=g and b, by={4}}]
(4) circle (3pt) node[below] {$4$};
%----------------------the Gauss diagram--------------------------------------
\begin{scope}[xshift = 5cm, yshift = 1cm]
     \draw[line width =2] (0,0) circle (2.2);
     
     {\foreach \angle/ \label in
   { 90/1, 135/4, 180/2, 225/1, 270/4, 315/3, 0/3, 45/2}
   {
    \fill(\angle:2.5) node{$\label$};
    \fill(\angle:2.2) circle (3pt) ;
    }
}
     \draw[line width = 2] (90:2.2) -- (225:2.2);
     \draw[line width = 2] (135:2.2) -- (270:2.2);
     \draw[line width = 2] (180:2.2) -- (45:2.2);
     \draw[line width = 2] (0:2.2) to [out = 180, in = 140] (315:2.2);
     
\end{scope}
%----------------the graph----------------------------------------------
\begin{scope}[xshift =10cm, yshift = 0.5cm]
  \draw[line width = 2] (90:1.7) -- (200:1.7);
  \draw[line width = 2] (90:1.7) -- (340:1.7);
  \draw[line width = 2] (200:1.7) -- (340:1.7);
  
  \fill(90:2) node{$1$};
  \fill(90:1.7) circle(3pt);
  
  \fill(200:2) node{$2$};
  \fill(200:1.7) circle(3pt);
  
  \fill(340:2) node{$4$};
  \fill(340:1.7) circle(3pt);
  
  \fill(40:3) node{$3$};
  \fill(40:2.7) circle(3pt);
  
 \end{scope}
%------------------a),b),c)-----------------------------------------------------
  \fill(0,-2.5) node{$a)$};
  \fill(5,-2.5) node{$b)$};
  \fill(10,-2.5) node{$c)$};
    \end{tikzpicture}
    \caption{Example of a) a planar curve; b) its Gauss diagram and c) its interlacement graph. The corresponding Gauss word is $\mathbf{12334124}$}
    \label{fig:gauss-ex}
\end{figure}
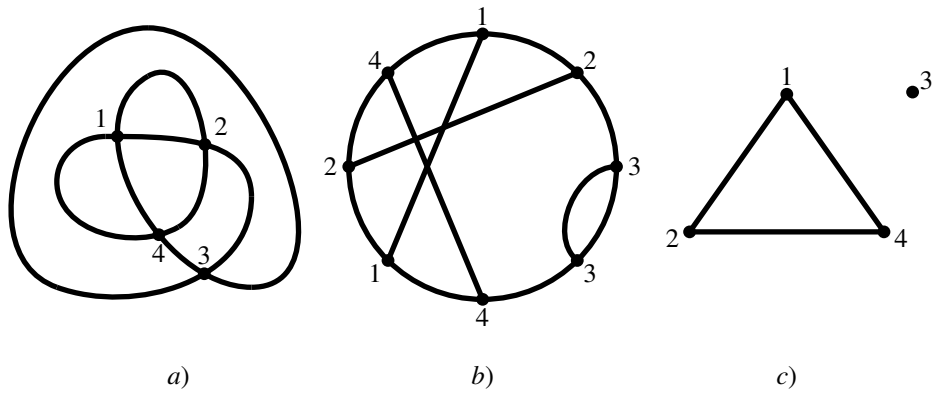
%==========================================THE END OF FIGURE 1 ============================================================

%==============================================FIGURE 1'=====================================================

\begin{figure}[h!]
\centering
\begin{tikzpicture}[scale =0.9]
\begin{scope}[xshift = -2cm]
 \draw[line width =2, name path= a](0,0) to [out= 30, in = 30] (-0.5, 2.5);
 \draw[line width =2, name path= b](-0.5,2.5) to [out= 210, in = 180] (1.2, -1);
 \draw[line width =2, name path= c] (1.2,-1) to [out= 0, in = 0] (-0.5, 3.3);
 \draw[line width =2, name path= d] (-0.5,3.3) to [out= 180, in = 160] (-2, -1);
 \draw[line width =2, name path= e] (-2,-1) to [out= 340, in = 270] (1.2, 0.5);
 \draw[line width =2, name path= f] (1.2,0.5) to [out= 90, in = 0] (-1.2, 1.5);
 \draw[line width =2, name path= g] (-1.2,1.5) to [out = 180, in = 90] (-2, 0.75) to [out= 270, in = 210] (0, 0);

 %intersections
 \fill [name intersections={of=f and b, by={1}}]
(1) circle (3pt) node[above left] {$1$};

 \fill [name intersections={of=a and f, by={2}}]
(2) circle (3pt) node[above right] {$2$};

 \fill [name intersections={of=e and b, by={3}}]
(3) circle (3pt) node[below] {$3$};

 \fill [name intersections={of=g and b, by={4}}]
(4) circle (3pt) node[below] {$4$}; 
\end{scope}

\begin{scope}[xshift = 4cm]
 \draw[line width =2, name path= a](0,0) to [out= 30, in = 30] (-1, 2.5);
 \draw[line width =2, name path= b](-1,2.5) to [out= 210, in = 250] (0.8, -0.5);
 \draw[line width =2, name path= f] (0.8,-0.5) to [out= 70, in = 0] (-1.4, 1.5);
 \draw[line width =2, name path= g] (-1.4, 1.5) to [out = 180, in = 90] (-2, 0.75) to [out= 270, in = 210] (0, 0);
 
 \fill [name intersections={of=f and b, by={1}}]
(1) circle (3pt) node[above left] {$1$};

 \fill [name intersections={of=a and f, by={2}}]
(2) circle (3pt) node[above right] {$2$};

  \fill [name intersections={of=g and b, by={4}}]
(4) circle (3pt) node[below] {$4$}; 
 \end{scope}

\begin{scope}[xshift = 8cm]
 \draw[line width =2, name path= a](0,0) to [out= 300, in = 270] (2, 1);
 \draw[line width =2](2,1) to [out= 90, in = 0] (0, 3);
 \draw[line width =2](0,3) to [out= 180, in = 150] (-1, -1);
 \draw[line width =2, name path= d](-1,-1) to [out= 330, in = 300] (1, 1);
 \draw[line width =2](1,1) to [out = 120, in = 0] (0.4,1.3) to [out= 180, in = 120] (0, 0);
 
   \fill [name intersections={of=d and a, by={3}}]
(3) circle (3pt) node[below] {$3$}; 
\end{scope}

  \fill(1,0.8) node{{\Huge{=}}};
  \fill(5.5,0.8) node{{\Huge{\#}}};
  \draw[line width = 1, dotted] (-2.5,-0.7) -- (-0.5,0);

\end{tikzpicture}
\caption{The curve on the left is a connected sum of the other two.}
\label{fig:connected-sum}
\end{figure}
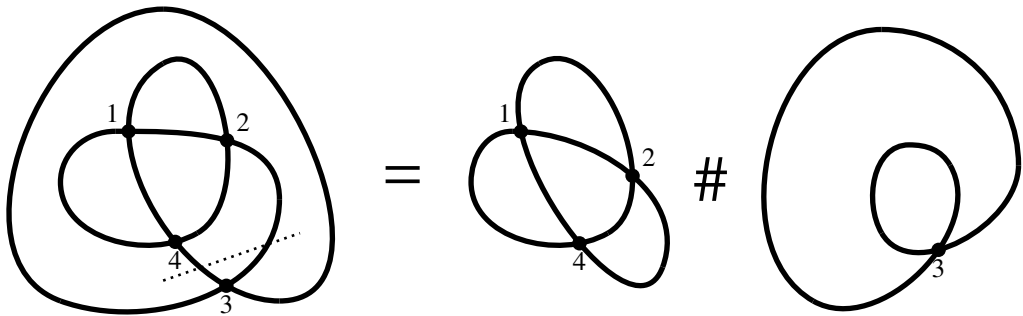

%=====================================THE END OF FIGURE 1'=======================================================

%=============================================================================================

\section{Realizability descriptions based on interlacement graphs} \label{sec:realisability-descriptions}

In this section we survey known realiziability description for Gauss diagrams which can be expressed in terms of the interlacement graphs.

\subsection{R-conditions} 

For the first time the
relizability description for Gauss words and diagrams expressed solely in terms of interlacement graphs was 
found by Rozenstiehl in 
\cite{rosenstiehl:hal-00259712}. Following  \cite{10.1007/3-540-63938-1_65} Rozentiehl's conditions can be formulated as follows. 
A Gauss diagram $g$ is realizable iff its interlacement graph $I_{g}$ has the following properties: 

\begin{itemize}
    \item the first parity condition is satisfied, that is, the degree of each vertex is even; \hspace*{8mm} {\bf (PC1)}
    \item there is a subset of vertices $A \subset V$ of $I_{g}$ such that the following two conditions are equivalent for any two vertices $u$ and $v$:  
    \begin{itemize}
    \item the vertices u and v have an odd number of common neighbours, 
    \item the vertices u and v are neighbours and either both are in $A$ or neither is in $A$  
    \end{itemize}
\end{itemize}

\subsection{STZ-conditions} 

Shtylla, Traldi and Zulli 
in \cite{DBLP:journals/dm/ShtyllaTZ09}, while keeping {\bf PC1} condition,  
presented an algebraic re-formulation of the second Rosentiehl's condition: 

\begin{itemize}
\item For its adjacency matrix $M_{g}$ there is a 
    diagonal matrix $\Lambda_{g}$ such that $M_{g}+\Lambda_{g}$ is an idempotent matrix (all matrices are considered over $GF(2)$, the finite field of two elements). 
\end{itemize}

\subsection{GL-conditions}
Another desciption for realizability expressible in terms of interlacement graph  was proposed by Grinblat and Lopatkin in  \cite{grinblat2018realizability,doi:10.1142/S0218216520500315}. It is claimed that a prime Gauss diagram $g$ is realizable if and only if the following conditions hold true: 

\begin{itemize}
    \item In its interlacement graph $I_{g}$ each pair of non-neighbouring vertices has an \emph{even} number of common neighbours (possibly, zero).  $\hspace*{9cm}$   ({\bf PC2})
    \item The above condition {\bf PC2} holds true for the reduced graph $I_{g}/v$ for each vertex $v$ of $I_{g}$.   
\end{itemize}

For a vertex $v$ of $I_{g} = \langle V,E\rangle$ the reduced graph $I_{g}/v = \langle V',E' \rangle$ is defined as follows.
$V' = V -\{v\}$ and  $E' = \{ (v_{1},v_{2}) \in E |  ((v,v_{1}) \not\in E)\lor (v,v_{2}) \not\in E))  \} \cup  \{ (v_{1},v_{2}) \mid (v,v_{1}) \in E \;\& \; (v,v_{2}) \in E \;\&\; (v_{1},v_{2}) \not\in E\}$. See also further discussion of this operation in terms of intersecting chords in Section 2 of   
\cite{doi:10.1142/S0218216519500159}.

\subsection{B-conditions} 

Biryukov in \cite{doi:10.1142/S0218216519500159} proposes a realizability description which is even simpler 
than GL-conditions. 
It is claimed that a prime Gauss diagram $g$ is realizable  if and only if 
the following conditions for 
$I_{g} = \langle V,E\rangle$ 
hold true: 

\begin{itemize}
    \item It  satisfies both above {\bf PC1} and {\bf PC2} conditions, which is called a \emph{strong parity condition} in  \cite{doi:10.1142/S0218216519500159};
    \item  For any three pairwise connected vertices  $a, b, c \in V$   the sum of the number of vertices  adjacent to  $a$, but not adjacent to  $b$ nor $c$, and the number of vertices adjacent to  b and c, but not adjacent to  $a$, is even.   \hspace*{12cm}       {\bf(PC3)}
\end{itemize}

\begin{remark}
 The B-conditions have already appeared in \cite{grinblat2018realizability} and been presented in \cite{grinblat2018realizability,doi:10.1142/S0218216520500315} as necessary conditions of realizability of Gauss diagrams in terms of adjacency matrix. These conditions appeared in the proof of \cite[Theorem 4.2]{grinblat2018realizability} (see also the proof of \cite[Theorem 4.3]{doi:10.1142/S0218216520500315}) as follows $|A| + |B| \equiv  \bmod (2)$ where $A$ and $B$ are exactly sets mentioned in \textbf{PC3} conditions.  
\end{remark}

\section{Algorithms for producing Gauss diagrams}

In this section we describe two  algorithms which we devised and used 
for the generation of Gauss diagrams.

\subsection{Even-odd Matchings and Permutation-based algorithm}

Let  $V_{2n}= \{0, \ldots, 2n-1\}$. A family of subsets $S_{j} \subseteq V_{2n}$, $j = 1, \ldots n$  is called an even-odd matching of $V_{2n}$ iff 1) $\cup  S_{j} = V_{2n}$; 2) $S_{i} \cap S_{j} = \emptyset$, $i \not= j$; 3) each $S_{j}$ contains exactly one even and one odd number.   Even-odd matchings serve as natural encodings of Gauss diagrams; indeed, place the numbers $\{0,\ldots, 2n-1\}$ clockwise around a circle and connect those in each $S_{j}$ by a chord. Obviously, the first parity condition {\bf PC1} will be satisfied in the produced chord diagram.    

For efficient Gauss diagram  generation we use the the following simple bijective map from the symmetric group $S_{n}$ to the set $P_{2n}$ of all even-odd matchings on $V_{2n}= \{0, \ldots, 2n-1\}$.  
For  $\sigma \in S_{n}$,  $\sigma:[1..n]\rightarrow[1..n]$ , $\beta(\sigma) =  \{\{2*i-1,2*\sigma(i)-2\}| i \in [1..n]\}$. 

To deal with even-odd matchings algorithmically we use their list (or $n$-tuple)   representation $((a^1_1, a^2_1), \dots,$ $(a^1_n, a^2_n))$, called a \emph{lintel} in \cite{khan2021experimental}.   
Two lintels are \emph{equivalent} if they can be transformed into one another by steps of the following two types: 1) swapping the positions of two numbers in a chord; 2) swapping the positions of chords in the list. 3) shifting the value of each entry of the lintel cyclically modulo $2n$; 4) inverting the value of each entry of the lintel modulo $2n$. The equivalence of lintels corresponds to equivalence of Gauss diagrams.   
A lintel is a \emph{sorted lintel} if each pair in it is sorted, and first elements of pairs are sorted, that is, for each $i$ we have $a^1_i < a^2_i$, and for all $i < j$ we have $a^1_i < a^1_j$.
 In each class of equivalent lintels the sorted lintel which is the first in the lexicographic order of lintels 
(we call such lintels \emph{Lyndon lintels}, by analogy with Lyndon words~\cite{Lyndon})
can 
serve as the canonical representative of the class, and there is a one-to-one correspondence between Lyndon lintels and Gauss diagrams. 

Thus, we can use the following permutation-based algorithm for generating all non-equivalent Gauss diagrams satisfying some given properties $P$.  

\begin{alg}
For each permutation $\sigma \in S_{n}$:\\
1. Produce a lintel $l_{\sigma}$ corresponding to $\beta(\sigma)$. \\
2. Produce the canonical form (the Lyndon lintel) $l$   of $l_{\sigma}$\\
3. If $l$ satisfies $P$  and is not stored yet in the list $L$ then store $l$ in $L$.  

\end{alg}

\subsection{An algorithm for producing prime Gauss diagrams}

Our experiments to check the correctness of descriptions of realizability of
Gauss diagrams required us to generate all realizable and unrealizable Gauss diagrams of small sizes. From the form of the descriptions of realizable Gauss diagrams, we knew that if there is a counterexample, it is possible to find a counterexample which is a prime Gauss diagram. Therefore, for our experiments
it is sufficient to generate only all prime Gauss diagrams. 

Thus, we were motivated to look for an algorithm to generate efficiently all prime Gauss diagrams of a given size. We have found such an algorithm and used it in some of our experiments. This section describes this algorithm and proves that it works correctly. We assume that condition \textbf{PC1} is a part of the definition of a Gauss diagram, so you will see below that we set up our algorithm in such a way that it only produces all prime Gauss diagrams which satisfy condition \textbf{PC1}.

Recall that a Gauss diagram of size $n$ is a circle with $n$ chords; we refer to the endpoints of chords as \emph{vertices}. We call each part of the circumference of the circle between two consecutive vertices an \emph{arc}. Let $G$ be a prime Gauss diagram of size $n\ge2$. Consider an arc $(u,v)$ connecting vertices $u,v$. Since $G$ is prime, it contains no chord connecting $u$ and $v$ (indeed, if $G$ contained such a chord, this chord would be disconnected from the rest of the diagram). Therefore, $u$ and $v$ belong to two distinct chords $c_{u}$ and $c_{v}$. If the chords $c_{u}$ and $c_{v}$ interlace (don't interlace), we will say that the arc $(u,v)$ is a $\times$-arc (a $\parallel$-arc).

By a \emph{wheel Gauss diagram} we shall mean a Gauss diagram of size
$n$ whose list of vertices (listed consecutively around the circle)
are $v_{1},v_{2},\dots,v_{2n}$ and whose chords are $(v_{i},v_{n+i})$
for all $i$; informally speaking, each vertex is connected to the
opposite vertex lying on the opposite side of the circle\footnote{In graph-theoretical terms, the graph corresponding to this diagram
is not what is called a wheel graph, but what is called a Mobius ladder;
however, it would be a mouthful to talk about a `Mobius ladder Gauss
diagram', so we call them wheel diagrams.}. The following result is a simple observation.

\begin{lemma}
\begin{enumerate}
    \item 
 If $G$ is a prime Gauss diagram of size $n\ge2$ then some
of its arcs are $\times$-arcs; in other words, not all its arcs are
$\parallel$-arcs.

\item
Let $G$ be a Gauss diagram. All arcs of $G$ are $\times$-arcs
if and only if $G$ is a wheel diagram.
\end{enumerate}
\end{lemma}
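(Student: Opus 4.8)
The plan is to coordinatize the diagram: place the $2n$ vertices at cyclic positions $0,1,\dots,2n-1$ and let $\pi$ be the fixed-point-free involution sending each position to the other endpoint of its chord. For each position $k$ I would record the clockwise distance to its partner, $f(k) = (\pi(k)-k) \bmod 2n \in \{1,\dots,2n-1\}$. Note that whenever the arc $(k,k+1)$ joins two distinct chords — which always holds in a prime diagram, and is forced by the hypothesis of part (2) since an arc joining one chord to itself is not a $\times$-arc — one has $\pi(k)\ne k+1$ and hence $f(k)\ge 2$. The whole argument will hinge on re-expressing the $\times$/$\parallel$ dichotomy purely in terms of how $f$ changes as we cross an arc.

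The key step, and the only place I expect real work, is the claim that the arc $(k,k+1)$ is a $\times$-arc if and only if $f(k)\le f(k+1)$. To prove it I would use the standard fact that two chords interlace exactly when their four endpoints alternate around the circle. Listing the endpoints clockwise from $k$, we meet $k$, then $k+1$, and then the partners $\pi(k),\pi(k+1)$ in some order; the two chords alternate, hence interlace, precisely when $\pi(k)$ is reached before $\pi(k+1)$, that is when $(\pi(k)-k)\bmod 2n < (\pi(k+1)-k)\bmod 2n$. A short computation showing $(\pi(k+1)-k)\bmod 2n = f(k+1)+1$ — which uses $\pi(k+1)\ne k$, guaranteed because the chords are distinct, so that $f(k+1)\le 2n-2$ — converts the inequality into $f(k) < f(k+1)+1$, i.e. $f(k)\le f(k+1)$. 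The care here is entirely in the modular bookkeeping and in checking that the four endpoints are genuinely distinct, so I would dispatch those coincidences explicitly; this is the main obstacle.

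Granting the claim, both parts drop out of a cyclic monotonicity observation. For part (1), if no arc were a $\times$-arc, then since every arc of a prime diagram joins two distinct chords, every arc would be a $\parallel$-arc, giving $f(k) > f(k+1)$ for all $k$; read around the circle this yields the impossible chain $f(0) > f(1) > \cdots > f(2n-1) > f(0)$. Hence some arc is a $\times$-arc. For part (2), ``all arcs are $\times$-arcs'' means $f(k)\le f(k+1)$ for every $k$, and the cyclic chain $f(0)\le f(1)\le\cdots\le f(2n-1)\le f(0)$ forces $f$ to be constant, say $f\equiv c$. Then $\pi(k)=k+c$, and the involution identity $\pi(\pi(k))=k$ gives $2c\equiv 0 \pmod{2n}$; since $c\ge 2$ we get $c=n$, which is exactly the wheel diagram. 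Conversely, for a wheel diagram $f\equiv n$ is constant, so $f(k)\le f(k+1)$ holds trivially and every arc is a $\times$-arc.

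I would state the argument for $n\ge 2$, which is the setting of the lemma and is exactly what makes $c=n$ yield $\pi(k)=k+n\ne k\pm 1$, so that every arc of the wheel genuinely joins distinct chords.
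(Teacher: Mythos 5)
Your proof is correct. There is, however, nothing in the paper to compare it against: the lemma is introduced there with the phrase ``The following result is a simple observation'' and no proof is given at all, so your argument supplies the rigor the paper leaves implicit. Your coordinatization --- the fixed-point-free involution $\pi$, the clockwise-distance function $f(k)=(\pi(k)-k)\bmod 2n$, and the key equivalence that the arc $(k,k+1)$ is a $\times$-arc iff $f(k)\le f(k+1)$ --- reduces both parts to cyclic monotonicity: a strictly decreasing cyclic chain $f(0)>f(1)>\cdots>f(2n-1)>f(0)$ is impossible (part 1), and a weakly increasing cyclic chain forces $f$ to be constant, which together with $\pi\circ\pi=\mathrm{id}$ and fixed-point-freeness gives $f\equiv n$, i.e.\ the wheel (part 2). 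The modular bookkeeping in your key step checks out: distinctness of the two chords gives $\pi(k+1)\ne k$, hence $f(k+1)\le 2n-2$ and $(\pi(k+1)-k)\bmod 2n=f(k+1)+1$ with no wrap-around, and injectivity of $\pi$ rules out $f(k)=f(k+1)+1$, so alternation of the four (distinct) endpoints is exactly $f(k)\le f(k+1)$. You also correctly flag the $n=1$ degenerate case (the size-$1$ wheel has no $\times$-arcs at all), which the paper's unqualified statement of part (2) glosses over; restricting to $n\ge 2$, as you do, is the right repair.
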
 

Consider a Gauss diagram $G$. Choose one of its chords $(u,v)$ and
one of its arcs $(p,q)$. Insert two new vertices $p^{\sharp},q^{\flat}$
between $p$ and $q$; in other words, split the arc $(p,q)$ into
$3$ consecutive arcs $(p,p^{\sharp})$, $(p^{\sharp},q^{\flat})$,
$(q^{\flat},q)$. Delete the chord $(u,v)$. Add either two new chords
$(u,p^{\sharp})$ and $(v,q^{\flat})$ or two new chords $(v,p^{\sharp})$
and $(u,q^{\flat})$; the choice between the former or the latter
pair of new chords is determined by the fact that exactly one of them
preserves condiction \textbf{PC1} in the newly formed diagram, so we choose
the one that does. Let us refer to the process we have just described as the
\emph{teepee move}, and to its inverse as the \emph{reverse teepee
move}. If the arc $(p^{\sharp},q^{\flat})$ in the diagram formed
by a teepee move is a $\times$-arc (a $\parallel$-arc), we will say
that it is a $\times$-move (a $\parallel$-move). Figure \ref{fig:A-teepee-move}
shows an example of a teepee move (a $\parallel$-move), by which
one chord (shown as a dashed line) is replaced by two new chords connected
to two new vertices.

%============================The figure has been placed ===========================
%\begin{figure}
%\includegraphics[scale=0.8]{teepee-move}\caption{\label{fig:A-teepee-move}A teepee move}
%\end{figure}

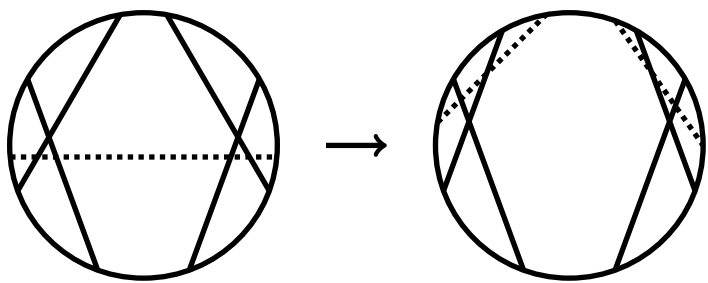
\begin{figure}[h!]
    \centering
     \begin{tikzpicture}[scale = 0.8]
      \begin{scope}
        \draw[line width =2] (0,0) circle (2.2);
        
        \draw[line width =2] (200:2.2) -- (100:2.2);
        \draw[line width =2] (150:2.2) -- (250:2.2);
        
        \draw[line width =2] (80:2.2) -- (340:2.2);
        \draw[line width =2] (30:2.2) -- (290:2.2);
        
        \draw[line width =2, dotted] (185:2.2) -- (355:2.2);
      \end{scope}
       \draw[line width =2,->] (0:3) -- (0:4);
      \begin{scope}[xshift =7cm]
       \draw[line width =2] (0,0) circle (2.2);
       
        \draw[line width =2] (200:2.2) -- (120:2.2);
        \draw[line width =2] (150:2.2) -- (250:2.2);
        
        \draw[line width =2] (60:2.2) -- (340:2.2);
        \draw[line width =2] (30:2.2) -- (290:2.2);
        
        \draw[line width =2, dotted] (170:2.2) -- (100:2.2);
        \draw[line width =2, dotted] (70:2.2) -- (0:2.2);
       
      \end{scope}
    \end{tikzpicture}
    \caption{A teepee move}
    \label{fig:A-teepee-move}
\end{figure}

%============================the end ==============================================

\begin{theorem} \label{thm:teepee}
Suppose $G$ is a prime Gauss diagram of size $n\ge2$ which
is not a wheel. Then it can be produced by a teepee move from a prime
Gauss diagram of size $n-1$.
\end{theorem}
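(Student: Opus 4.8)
The plan is to prove the theorem by exhibiting a \emph{reverse teepee move} on $G$. Since the teepee move is invertible, it suffices to find one arc of $G$ whose reverse move produces a \emph{prime} Gauss diagram $G'$ of size $n-1$; the forward teepee move from $G'$, with the pairing forced by \textbf{PC1}, then reconstructs $G$. The candidate reverse moves are indexed by the arcs of $G$: for an arc $(m,m')$ the two endpoints lie on distinct chords $c=(m,M)$ and $d=(m',M')$ (distinct because $G$ is prime, as already noted in the text), and the move deletes $m,m'$ and replaces $c,d$ by a single chord $e=(M,M')$. First I would record, via the Lemma, that a non-wheel $G$ has \emph{both} $\times$-arcs and $\parallel$-arcs; hence, reading the arcs cyclically, there is a vertex $v$ at which a $\times$-arc meets a $\parallel$-arc, and this transition is where I will look for the arc to reverse.

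The first and easy step is that \textbf{PC1} is preserved by the reverse move at \emph{every} arc. Because $m$ and $m'$ are consecutive, no chord separates them, so for any chord $x\ne c,d$ the number of crossings of $x$ with $e$ has the same parity as the number of crossings of $x$ with $c$ plus those with $d$. Hence every surviving chord keeps an even number of crossings, and since the total number of crossing pairs is even, the new chord $e$ is forced to be even too. I would also note here that reversing at $(m,m')$ and then applying the forward move that recreates $m,m'$ returns $G$, using that the \textbf{PC1}-preserving pairing of the forward move is unique.

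The heart of the proof is that, for a suitable arc, $G'$ is \emph{prime}. I would first reformulate the move on the interlacement graph: reversing at $(m,m')$ replaces the vertices $c,d$ by a single vertex $e$ with $N(e)=N(c)\,\triangle\,N(d)$ (symmetric difference, taken among the remaining vertices), leaving all other adjacencies unchanged; thus $G'$ is prime exactly when this operation keeps the interlacement graph connected. This viewpoint explains the exclusion of wheels: a wheel has complete interlacement graph, so every pair of chords are adjacent twins with $N(c)\,\triangle\,N(d)=\emptyset$, and every merge isolates $e$, destroying primeness. For a non-wheel the task is to find an arc whose merge does not disconnect the graph.

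The main obstacle is to certify connectivity for a well-chosen arc. I would characterize the obstruction geometrically: the merge at $(m,m')$ disconnects the interlacement graph precisely when $\{c,d\}$ forms a nontrivial $2$-cut of the diagram --- a cut of the circle crossed by exactly the two chords $c,d$ with a complete chord on each side --- positioned so that $m,m'$ lie on a side carrying a complete chord (the merged chord $e=(M,M')$ then slides entirely to the opposite side, splitting the diagram). Consequently, if $G$ has no nontrivial $2$-cut, then every arc works and we reduce at any arc. Otherwise I would pass to an extremal piece of the $2$-cut decomposition and attempt to reduce at the $\times$/$\parallel$ transition arc located above inside it, arguing from the primeness of $G$ and the adjacent $\parallel$-arc that the two chords of this arc cannot themselves form a separating $2$-cut. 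Turning this last connectivity argument into a clean, case-free proof --- ruling out a surviving $2$-cut for the chosen transition arc --- is the step I expect to be the crux.
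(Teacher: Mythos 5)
Your setup is sound as far as it goes: the reverse move does preserve \textbf{PC1} by the parity argument you give, your reformulation of the reverse move on the interlacement graph (merge $c,d$ into $e$ with $N(e)=N(c)\,\triangle\,N(d)$) is correct and is a genuinely useful observation not made explicit in the paper, and you start from the same place the paper does, namely the existence of a $\times$/$\parallel$ transition. But the entire content of the theorem is that \emph{some} arc admits a reverse move whose output is prime, and this is exactly what you never establish: your final step (``pass to an extremal piece of the $2$-cut decomposition and attempt to reduce at the transition arc'') is an intention rather than an argument, and you concede as much when you call it ``the step I expect to be the crux.'' A proposal that stops at the crux is not a proof of the theorem.

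Worse, the geometric criterion your plan rests on is false as stated, so the plan cannot be completed in its present form. If the two chords $c=(m,M)$, $d=(m',M')$ at the merged arc cross and their far endpoints $M,M'$ also happen to be consecutive on the circle, then the merged chord $e=(M,M')$ joins adjacent vertices, crosses nothing, and is an isolated vertex of $I_{G'}$; the merge therefore \emph{always} disconnects, whether or not any $2$-cut of your form exists. Wheels are one instance of this (and for wheels of size at least $4$ one checks directly that no cut of the circle is crossed by exactly $c$ and $d$ with a complete chord on each side), but the phenomenon is not confined to wheels: the $7$-chord diagram with Gauss word $c\,d\,p_1p_2p_3p_4p_5\,c\,d\,p_3p_2p_1p_4p_5$ satisfies \textbf{PC1}, is prime and is not a wheel, yet the merge at the arc between the first occurrences of $c$ and $d$ isolates $e$, while the only cuts crossed by exactly $\{c,d\}$ are the two trivial ones hugging $\{m,m'\}$ and $\{M,M'\}$, neither of which carries a complete chord on its small side. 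So your ``precisely when'' fails, and with it the step ``no nontrivial $2$-cut $\Rightarrow$ every arc works.'' Note also what the paper's proof has that yours lacks: at the transition, writing the chords through $u,v,w$ as $(u,u'),(v,v'),(w,w')$, it distinguishes two configurations according to whether $(u,u')$ and $(w,w')$ interlace, and reverses at the $\times$-arc in one configuration but at the $\parallel$-arc in the other; the choice of \emph{which} of the two transition arcs to reverse is essential, and your sketch leaves it undetermined. (The paper's own verification that primeness survives is itself only sketched, but that case split is the substance any completed proof along these lines must supply.)
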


\begin{proof}
Since $G$ is not a wheel, by the lemma, it contains both $\times$-arcs
and $\parallel$-arcs. Consider two consecutive arcs such that one
of them is a $\times$-arc and the other is a $\parallel$-arc. Specifically,
suppose $(u,v)$ is a $\times$-arc and $(v,w)$ is a $\parallel$-arc.
Let us denote the chords connected to $u,v,w$ by $(u,u')$, $(v,v')$
and $(w,w')$. There are two possible configurations of these $3$ chords
within $G$, and they are presented in Figure \ref{fig:Chords-in-the}.
In the former case, perform a reverse teepee move removing vertices
$u$ and $v$ and connecting vertices $u'$ and $v'$ with a chord.
In the latter case, perform a reverse teepee move removing vertices
$v$ and $w$ and connecting vertices $v'$ and $w'$ with a chord.
In either case, by considering all possible positions of those chords
of $G$ which are not shown in the figure, one can see that after
this reverse teepee move the diagram remains prime. 
\end{proof}

%==================================The figure 4 ====================================
%\begin{figure}
%\includegraphics{drawing-for-theorem}\caption{\label{fig:Chords-in-the}Chords in the proof of Theorem \ref{thm:teepee}}
%\end{figure}

\begin{figure}[h!]
    \centering
     \begin{tikzpicture}[scale =0.8]
     \begin{scope}
      \draw[line width =2] (0,0) circle (2.2);
       
       \draw[line width = 2] (80:2.2) -- (290:2.2);
       \draw[line width = 2] (110:2.2) -- (220:2.2);
       \draw[line width = 2] (140:2.2) -- (330:2.2);

       \fill(80:2.5) node{$w$};
       \fill(80:2.2) circle (3pt);
       
       \fill(110:2.5) node{$v$};
       \fill(110:2.2) circle (3pt);
       
       \fill(140:2.5) node{$u$};
       \fill(140:2.2) circle (3pt);
       
       \fill(220:2.5) node{$v'$};
       \fill(220:2.2) circle (3pt);
       
       \fill(290:2.5) node{$w'$};
       \fill(290:2.2) circle (3pt);
       
       \fill(330:2.5) node{$u'$};
       \fill(330:2.2) circle (3pt);
     \end{scope}

     \begin{scope}[xshift = 7cm]
      \draw[line width =2] (0,0) circle (2.2);
       
       \draw[line width = 2] (80:2.2) -- (310:2.2);
       \draw[line width = 2] (110:2.2) -- (220:2.2);
       \draw[line width = 2] (140:2.2) -- (270:2.2);

       \fill(80:2.5) node{$w$};
       \fill(80:2.2) circle (3pt);
       
       \fill(110:2.5) node{$v$};
       \fill(110:2.2) circle (3pt);
       
       \fill(140:2.5) node{$u$};
       \fill(140:2.2) circle (3pt);
       
       \fill(220:2.5) node{$v'$};
       \fill(220:2.2) circle (3pt);
       
       \fill(310:2.5) node{$w'$};
       \fill(310:2.2) circle (3pt);
       
       \fill(270:2.5) node{$u'$};
       \fill(270:2.2) circle (3pt);
     \end{scope}

     \end{tikzpicture}
    \caption{Chords in the proof of Theorem \ref{thm:teepee}}
    \label{fig:Chords-in-the}
\end{figure}
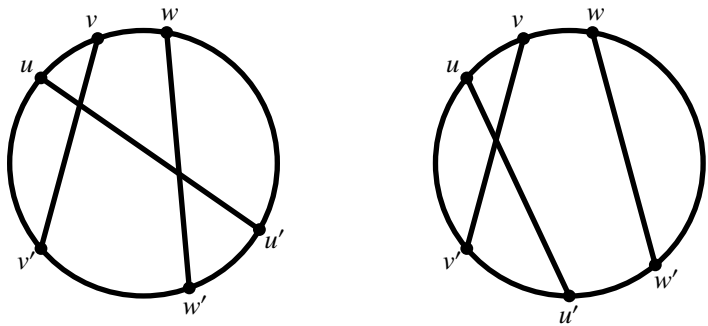

%================================================================

One might wonder whether the converse result is also true; indeed,
we saw that \emph{all} prime Gauss diagrams can be produced from smaller
prime Gauss diagrams using teepee moves; perhaps we also can claim
that \emph{only} prime Gauss diagrams can be produced from smaller
prime Gauss diagrams using teepee moves? Unfortunately, this is not
true. A counterexample is shown in Figure \ref{fig:A-teepee-move};
here a $\parallel$-move results in producing a composite Gauss diagram. 

In the proof of the theorem we explicitly employed both $\times$-moves
and $\parallel$-moves. One might wonder if it is sufficient to use
either only $\times$-moves or only $\parallel$-moves to produce
all prime Gauss diagrams. The answer is negative. In Figure \ref{fig:Producing-Gauss-diagrams}
we present an example of a prime Gauss diagram which cannot be produced
by a $\times$-move from a prime Gauss diagram and an example of a
prime Gauss diagram which cannot be produced by a $\parallel$-move
from a prime Gauss diagram. 

%\begin{figure}
%\includegraphics{using-teepee}\caption{\label{fig:Producing-Gauss-diagrams}Producing Gauss diagrams by %$\times$-moves
%and $\parallel$-moves}
%\end{figure}

%===============================================

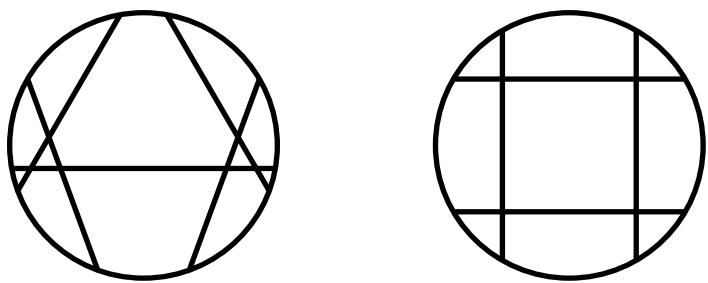
\begin{figure}[h!]
    \centering
     \begin{tikzpicture}[scale=0.8]
      \begin{scope}
        \draw[line width =2] (0,0) circle (2.2);
        
        \draw[line width =2] (200:2.2) -- (100:2.2);
        \draw[line width =2] (150:2.2) -- (250:2.2);
        
        \draw[line width =2] (80:2.2) -- (340:2.2);
        \draw[line width =2] (30:2.2) -- (290:2.2);
        
        \draw[line width =2] (190:2.2) -- (350:2.2);
      \end{scope}

      \begin{scope}[xshift =7cm]
        \draw[line width =2] (0,0) circle (2.2);
       
        \draw[line width =2] (120:2.2) -- (240:2.2);
        \draw[line width =2] (60:2.2) -- (300:2.2);
        
        \draw[line width =2] (30:2.2) -- (150:2.2);
        \draw[line width =2] (330:2.2) -- (210:2.2);
        
      \end{scope}
    \end{tikzpicture}
    \caption{Producting Gauss diagrams by $\times$-moves and $\parallel$-moves}
    \label{fig:Producing-Gauss-diagrams}
\end{figure}

%============================================================

Thus, we have designed the following useful algorithm for producing
all prime Gauss diagrams of a given size.
\begin{enumerate}
\item Start with a list of all prime Gauss diagrams of size $n-1$.
\item Apply to each of these Gauss diagrams the teepee move in all possible
combinations of the diagram's chords and arcs.
\item Some of Gauss diagrams produced by the teepee moves will be composite;
discard them.
\item Some Gauss diagrams will be produced multiple times; discard the duplicates.
\item Add the wheel diagram of size $n$.
\item What we have produced is a list of all prime Gauss diagrams of size
$n$. 
\end{enumerate}
\subsection{Implementation details} 
%\subsubsection{Details of the implementation} 
We have implemented both permutation-based and incremental  algorithms, as well as  classical  algorithm (CA) for realiziability checking from \cite{dehn1936,KAUFFMAN1999663},        
%The algorithm is implemented 
in logic programming language SWI-Prolog~\cite{wielemaker:2011:tplp}. 

Efficient built-in predicates \verb"Permutation/2" and \verb"sort/4" are used to  generate the permutations and to sort the lintels. The dynamic Prolog facts are used to  store intermediate and final results.

\section{Diagrams generation experiments and  Counterexamples} \label{sec:counterexamples}

Using our implementation, we have conducted a range of experiments and enumerated the classes of non-equivalent Gauss diagrams satisfying various    
combinations of properties.  

\begin{table}[h]
 \centering
    \begin{tabular}{|l|c|c|c|c|c|c|c|c|c|c|c|} % <-- 
   \hline 
   \it{   \;\;\;\;\;\; Size}  & 3 & 4 & 5 & 6 & 7 & 8 & 9 & 10 & 11 & 12
      %& 12 
     \\
      \hline
      Realisability (CA) & 1 & 1 & 2 & 3 & 10 & 27 & 101 & 364 & 1610 & 7202 
      %& 7202
      \\
      STZ & 1 & 1 & 2 & 3 & 10 & 27 & 101 & 364 & 1610 & 7202
      %& $\ldots$ 
      \\
      B  & 1 & 1 & 2 & 3 & 10 & 27 & {\bf 102} & {\bf 370} & {\bf 1646} & {\bf 7437}
      %& {\bf 7437} 
      \\
      GL & 1 & 1 & 2 & 3 & 10 & 27 & {\bf 102} & {\bf 370} & {\bf 1646} & {\bf 7437}
      %& $\ldots$ 
      \\
      \hline
    \end{tabular}
   \caption{The number of non equivalent Gauss diagrams of sizes  = 3, \ldots, 12, satisfying various     realizability conditions}
\label{tab:table1}
\end{table}

Table 1 shows the numbers of non-equivalent Gauss diagrams of sizes $3\ldots 12$ satisfying different realizability conditions, generated by our program.  The first two lines, found also in the OEIS as A264759 %\cite{oeisA264759}, 
together with the fact that generated corresponding lists of Gauss diagrams are the same,   verify that indeed, STZ correctly checks realizability up to the size 12.  

The last two lines show an interesting phenomenon.  Up to the size 8 B and GL conditions are satisfied by the same diagrams as  CA and STZ are.  For the size 9, however, there is one (= $102 - 101$)  up to equivalence Gauss diagram such that it satisfies both B and GL conditions, but is not realizable. For sizes 10,11 and 12 the numbers of such diagrams are 6, 36, and 235 respectively.

In summary, we can see that descriptions CA and STZ, on the one hand, and B and GL, on the other hand, are equivalent to each other up to the size 12. B and GL, however are not equivalent to CA or STZ, starting from size 9. Hence B and GL are not 
correct descriptions of
realizability, despite the claims in \cite{doi:10.1142/S0218216519500159} and \cite{grinblat2018realizability,doi:10.1142/S0218216520500315}, respectively. 

Employing our algorithms, we have also enumerated Gauss diagrams without imposing realizability conditions. Table 2 
demonstrates the numbers of non-equivalent prime Gauss diagrams (realizable and non-realizable) and the numbers of \emph{all} non-equivalent Gauss diagrams (not necessarily prime or realizable), which expands on the results of \cite{Valette16}.

\begin{table}[h]
 \centering
    \begin{tabular}{|l|c|c|c|c|c|c|c|c|c|l|} % <-- 
   \hline 
   \;\;\;\it{Size}  & 3 & 4 & 5 & 6 & 7 & 8 & 9 & 10 & 11 & Comments 
      %& 12 
     \\
      \hline
      PRIME & 1 & 1 & 4 & 8 & 40 & 183 & 1354 & 11079 & 110026 & new  
      \\ 
      \hline 
      ALL & 3 & 5 & 17 & 53 & 260 & 1466 & 10915 & 93196 & $\ldots$ &  \cite{Valette16} (up to n=7)  \\
     \hline
    \end{tabular}
   \caption{The numbers of non equivalent prime and all Gauss diagrams}
    \label{tab:table1}
\end{table}

\subsection*{Counterexample, Size=9}

Fig~\ref{fig:min} represents the smallest counter-example; this is the only non-realizable Gauss diagram of size $9$ which satisfies both B and GL conditions.

%===================================FIGURE 3==========================================================================
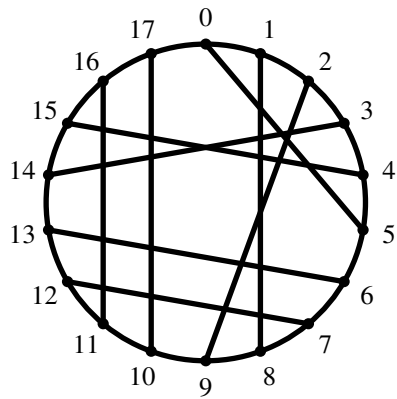
\begin{figure}[h!]
    \centering
     \begin{tikzpicture}[scale=0.7]
        \draw[line width =2] (0,0) circle (3);
     
     {\foreach \angle/ \label in
   { 90/0, 110/17, 130/16, 150/15, 170/14, 190/13, 210/12, 230/11, 250/10, 
     270/9, 290/8, 310/7, 330/6, 350/5, 10/4, 30/3, 50/2, 70/1 }
   {
    \fill(\angle:3.5) node{$\label$};
    \fill(\angle:3) circle (3pt) ;
    }
}

  \draw[line width = 2] (90:3) -- (350:3);
  \draw[line width = 2] (70:3) -- (290:3);
  \draw[line width = 2] (50:3) -- (270:3);
  \draw[line width = 2] (30:3) -- (170:3);
  \draw[line width = 2] (10:3) -- (150:3);
  \draw[line width = 2] (330:3) -- (190:3);
  \draw[line width = 2] (310:3) -- (210:3);
  \draw[line width = 2] (250:3) -- (110:3);
  \draw[line width = 2] (230:3) -- (130:3);

  \end{tikzpicture}
    \caption{Minimal contourexample, $n=9$}
%    \label{fig:my_label3}
    \label{fig:min}
\end{figure}
%===============================THE END OF FIGURE 3 ===================================================================

\subsection*{Counterexamples, Size=10}
Fig~\ref{fig:counter10} represents all $6$ counter-examples of size $10$; these are non-realizable Gauss diagram of size $10$ which satisfy both B and GL conditions.

%======================FIGURE 4 =========================================================================================
\begin{figure}[h!]
 \begin{tikzpicture}[scale=0.8]
   \begin{scope}[line width = 2, scale=0.7]
\draw (0,0) circle (3);

 \draw[line width = 2] (90:3) -- (324:3);
 \draw[line width = 2] (162:3) -- (360:3);
 
 \draw[line width = 2] (18:3) -- (72:3);
 \draw[line width = 2] (126:3) -- (216:3);
 \draw[line width = 2] (108:3) -- (234:3);
 \draw[line width = 2] (198:3) -- (288:3);
 
 \draw[line width = 2] (144:3) -- (342:3);
 \draw[line width = 2] (180:3) -- (306:3);
 
 \draw[line width = 2] (252:3) -- (54:3);
 \draw[line width = 2] (36:3) -- (270:3);

%=============================================================

  \end{scope}
  %222222222222222222222222222222222222222222222222222222222222222222222222222
  \begin{scope}[xshift = 6 cm,scale=0.7, line width = 2]
    \draw[line width =2] (0,0) circle (3);

 \draw[line width = 2] (162:3) -- (360:3);
 \draw[line width = 2] (90:3) -- (180:3);
 
 \draw (108:3) -- (234:3);
 \draw (126:3) -- (253:3);
 \draw (216:3) -- (306:3);
 \draw (216:3) -- (306:3);
 \draw (72:3) -- (18:3);
 
 \draw[line width = 2] (288:3) -- (54:3);
 \draw[line width = 2] (36:3) -- (270:3);

 \draw[line width = 2] (144:3) -- (342:3);
 \draw[line width = 2] (324:3) -- (198:3);
%=============================================================

  \end{scope}
 
%333333333333333333333333333333333333333333333333333333333333333333333333333333333333333
\begin{scope}[xshift = 12cm,scale = 0.7,line width = 2]
    \draw[line width =2] (0,0) circle (3);

 \draw[line width = 2] (324:3) -- (234:3);
 \draw[line width = 2] (72:3) -- (18:3);
 
 \draw (108:3) -- (198:3);
 \draw (162:3) -- (288:3);
 \draw (180:3) -- (306:3);
 \draw (36:3) -- (270:3);
 \draw (72:3) -- (18:3);
 
 \draw[line width = 2] (360:3) -- (126:3);
 \draw[line width = 2] (144:3) -- (342:3);
 
 \draw[line width = 2] (90:3) -- (216:3);
 \draw[line width = 2] (252:3) -- (54:3);
%=============================================================

\end{scope}
%444444444444444444444444444444444444444444444444444444444444444444444444444
\begin{scope}[yshift = -6 cm,scale = 0.7,line width = 2]
    \draw[line width =2] (0,0) circle (3);

 \draw[line width = 2] (252:3) -- (162:3);
 \draw[line width = 2] (306:3) -- (180:3);
 
 \draw (90:3) -- (216:3);
 \draw (108:3) -- (234:3);
 \draw (126:3) -- (360:3);
 \draw (36:3) -- (270:3);
 \draw (72:3) -- (18:3);
 
 \draw[line width = 2] (36:3) -- (270:3);
 \draw[line width = 2] (54:3) -- (288:3);
 
 \draw[line width = 2] (144:3) -- (342:3);
 \draw[line width = 2] (198:3) -- (324:3);
%=============================================================

  \end{scope}
%55555555555555555555555555555555555555555555555555555555555555555555555555555555555
   \begin{scope}[yshift= - 6cm,xshift = 6cm, scale = 0.7,line width =2]
    \draw[line width =2] (0,0) circle (3);

 \draw[line width = 2] (144:3) -- (18:3);
 \draw[line width = 2] (270:3) -- (360:3);
 \draw[line width = 2] (72:3) -- (342:3);
 \draw[line width = 2] (324:3) -- (198:3);

 \draw (108:3) -- (234:3);
 \draw (126:3) -- (216:3);

 \draw[line width = 2] (252:3) -- (54:3);
 \draw[line width = 2] (288:3) -- (90:3);
 
 \draw[line width = 2] (162:3) -- (36:3);
 \draw[line width = 2] (306:3) -- (180:3);

%=============================================================

  \end{scope}
%66666666666666666666666666666666666666666666666666666666666666666666666666666666
  \begin{scope}[yshift= - 6cm,xshift = 12cm, scale = 0.7,line width =2]
    \draw[line width = 2] (0,0) circle (3);

 \draw[line width = 2] (108:3) -- (234:3);
 \draw[line width = 2] (144:3) -- (54:3);

 \draw (306:3) -- (180:3);
 \draw (198:3) -- (288:3);
 \draw (360:3) -- (270:3);
 
 \draw[line width = 2] (72:3) -- (342:3);
 \draw[line width = 2] (324:3) -- (90:3);
 
 \draw[line width = 2] (126:3) -- (216:3);
 \draw[line width = 2] (162:3) -- (36:3);
\draw[line width = 2] (252:3) -- (18:3);
%=============================================================

  \end{scope}
 \end{tikzpicture}
 \caption{All counterexamples for $n=10$}
 %\label{contourexamples for n=10}
 \label{fig:counter10}
\end{figure}
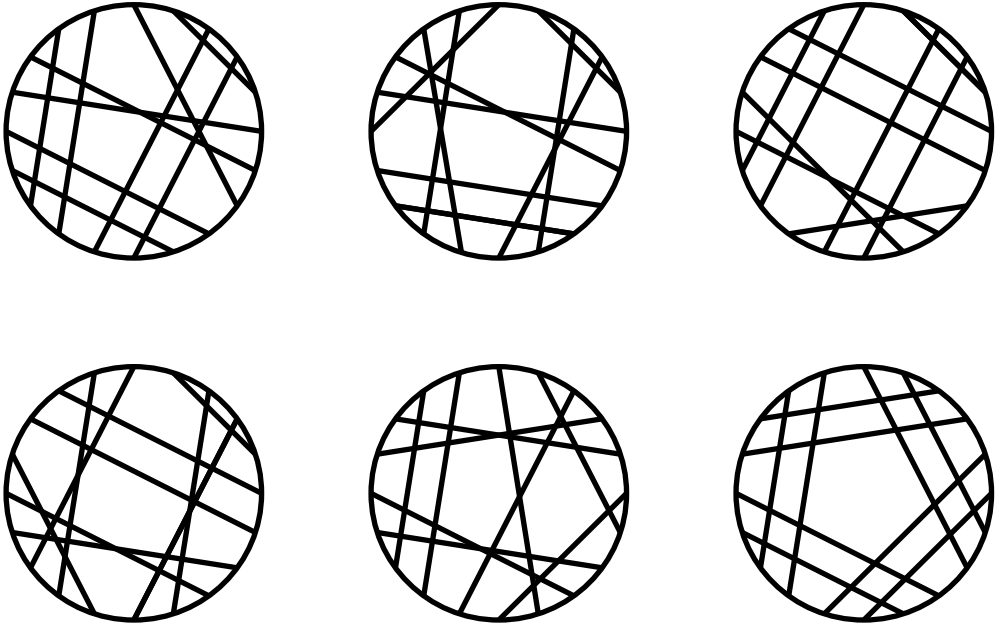
%===============================THE END OF FIGURE 4 =======================================================================

%\section{Counterexamples explained} 

\section{New descriptions of realizability}

In this section, we aim to give a new description of the realizability of Gauss diagrams in terms of the adjacency matrix of its interlacement graphs by using STZ-condition. We show that the realizability of a Gauss diagram is equivalent to the existence of a solution of the corresponding system of linear equations over the field $GF(2).$ We finish the section with explaining why the \textbf{GL} and \textbf{B} conditions are incorrect.

\subsection{Reformulations of STZ conditions}

Let $M$ be the adjacency matrix of a Gauss diagram $G$. Since $M$ is symmetric, $M^2 = (\langle m_i, m_j \rangle)_{1\le i,j \le n}$, over $GF(2)$, where 
\[
\langle m_i, m_j\rangle: = m_{i,1}m_{j,1} + \cdots + m_{i,n}m_{j,n},
\]
and $m_k:=(m_{k,1}, \ldots, m_{k,n})$ is the $k$th row of the $M$.

Let $D$ be a diagonal $n\times n$ matrix, i.e., $D = \sum_{k \in K} E_{k,k}$, where $K \subseteq \{1,\ldots, n\}$, and $E_{k,k}$ the elementary matrix, that is $E_{k,k} = (e_{i,j})_{1\le i,j \le n}$ where $e_{i,j} = 1$ if and only if $i=j=k$ and $e_{i,j}=0$ otherwise. Since $M$ is symmetric then, if $K \ne \{1,\ldots,n\}$, $DM + MD = \sum_{k \in K} M_k$ where $M_k$ is obtained from $M$ by zeroing all elements of $M$ except elements of the $k$th row and the $k$th column.

Note that if $K = \{1,\ldots, n\}$ then $D$ is the identity $n\times n$ matrix, and the STZ-condition implies $M^2 = M$, i.e, $M$ is idempotent.

Thus the STZ-conditions can be reformulated as follows

\begin{proposition}
 Let $G$ be a Gauss diagram and $M$ the adjacency $n \times n$ matrix of its interlacement graph $I_G$. Then $G$ is realizable if and only if one of the following condtions hold:
 \begin{enumerate}
     \item whenever $m_{i,j} \equiv \alpha \bmod{2}$ then $\langle m_i, m_j \rangle \equiv \alpha \bmod{2}$, $1\le i,j \le n$, $\alpha = 0,1$
     \item there is a $K\subsetneq \{ 1,\ldots, n \}$ such that $M + M^2 = \sum_{k\in K}M_k.$ 
 \end{enumerate}  
\end{proposition}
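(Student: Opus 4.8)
The plan is to prove the proposition by unpacking the STZ-condition as stated in the excerpt, namely that $G$ is realizable if and only if the graph satisfies \textbf{PC1} and there exists a diagonal matrix $\Lambda_g$ such that $M + \Lambda_g$ is idempotent over $GF(2)$. My strategy is to take this as the known characterization (it is attributed to Shtylla, Traldi and Zulli earlier in the excerpt) and reformulate the idempotency equation into the two equivalent forms listed in the statement. Writing $\Lambda_g = D$, the condition $(M+D)^2 = M+D$ expands over $GF(2)$ to $M^2 + DM + MD + D^2 = M + D$. Since $D$ is diagonal, $D^2 = D$, so these cancel, leaving the single matrix identity $M^2 + DM + MD = M$, i.e. $M + M^2 = DM + MD$.

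For condition (2), I would use the computation already recorded just before the proposition: when $K \ne \{1,\dots,n\}$ we have $DM + MD = \sum_{k \in K} M_k$, where $M_k$ zeros out everything except the $k$th row and column. Thus the rearranged idempotency identity becomes exactly $M + M^2 = \sum_{k \in K} M_k$, which is condition (2). The only subtlety here is the restriction $K \subsetneq \{1,\dots,n\}$: I must argue that one never needs $K = \{1,\dots,n\}$. As the excerpt notes, $K = \{1,\dots,n\}$ forces $D$ to be the identity and hence $M^2 = M$; I would point out that this degenerate case is subsumed because whenever $M$ is idempotent one can still realize the same matrix identity with a proper subset $K$ (for instance the idempotency equation $M+M^2 = 0$ corresponds to the empty sum, $K = \emptyset$), so restricting to $K \subsetneq \{1,\dots,n\}$ loses no generality. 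I expect justifying this reduction cleanly to be the most delicate bookkeeping step.

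For condition (1), I would translate the single matrix equation $M^2 + DM + MD = M$ into its entrywise form and read off what the freedom in choosing $D$ amounts to. Off the diagonal, the $(i,j)$ entry of $DM + MD$ is $(d_i + d_j) m_{i,j}$ where $d_k$ is the $k$th diagonal entry of $D$; on the diagonal it contributes $2 d_i m_{i,i} = 0$. Hence the off-diagonal equations read $\langle m_i, m_j\rangle + (d_i+d_j)m_{i,j} = m_{i,j}$, and the diagonal equations read $\langle m_i, m_i \rangle = m_{i,i} = 0$. The existential quantifier over the choice of $D$ can then be eliminated: the crucial observation is that when $m_{i,j} = 0$ the term $(d_i + d_j)m_{i,j}$ vanishes regardless of $D$, forcing $\langle m_i, m_j\rangle = 0$; and when $m_{i,j} = 1$ one needs $\langle m_i, m_j\rangle = 1 + (d_i + d_j)$. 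I would argue that a consistent assignment of the $d_k$ exists precisely when the parity condition of (1) holds, i.e. that $\langle m_i, m_j\rangle \equiv m_{i,j} \bmod 2$ is forced exactly in the $m_{i,j}=0$ case while the $m_{i,j}=1$ constraints can always be met by an appropriate choice of the diagonal.

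The main obstacle I anticipate is the final direction of the equivalence for condition (1): showing that the system of constraints on the $d_k$ indexed by the edges $m_{i,j}=1$ is always solvable once the non-edge constraints $\langle m_i,m_j\rangle = 0$ hold. This is essentially a consistency argument for a linear system over $GF(2)$ governed by the graph structure, and I would need to verify it does not impose hidden obstructions — most naturally by invoking the known existence of the diagonal matrix $\Lambda_g$ from the STZ-condition rather than solving the system from scratch, thereby ensuring condition (1) is equivalent to the idempotency formulation I have already established. With that in hand, conditions (1) and (2) are both shown equivalent to the STZ-condition, and hence to realizability, completing the proof.
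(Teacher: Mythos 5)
Your reduction of condition (2) to the STZ condition is correct and is essentially the paper's own argument: expanding $(M+D)^2=M+D$ over $GF(2)$, cancelling $D^2=D$ against $D$ to get $M+M^2=DM+MD$, identifying $DM+MD$ with $\sum_{k\in K}M_k$, and observing that the excluded case $K=\{1,\dots,n\}$ (i.e.\ $D=I$) forces $M^2=M$ and is therefore subsumed by $K=\emptyset$. The genuine flaw is in your treatment of condition (1). Condition (1) says $\langle m_i,m_j\rangle\equiv m_{i,j}$ for \emph{all} pairs $i,j$ (both values of $\alpha$), i.e.\ $M^2=M$; it is exactly the instance $D=0$ of the STZ condition, so it implies realizability but is \emph{not} implied by it. Your plan, by contrast, is to prove condition (1) equivalent to the full STZ condition by eliminating the existential quantifier over $D$, resting on the claim that the edge constraints $d_i+d_j=1+\langle m_i,m_j\rangle$ (for pairs with $m_{i,j}=1$) ``can always be met'' once the non-edge constraints $\langle m_i,m_j\rangle=0$ hold. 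Both the target equivalence and this supporting claim are false, and the paper itself supplies counterexamples: the realizable diagram of Figure \ref{fig_for_STZ1} has $M^2\neq M$ (the STZ witness there is a nonzero diagonal matrix, e.g.\ $E_{3,3}+E_{4,4}$), so realizability does not yield condition (1); and the diagram of Figure \ref{fig_for_STZ1_not_realizable} has $M^2=0$, so every non-edge constraint holds, yet its circle graph contains triangles, so the system $d_i+d_j=1$ over the edges is unsolvable (it asks for a proper $2$-colouring of a non-bipartite graph) and that diagram is not realizable. Indeed, if your solvability claim were true, then \textbf{PC1} and \textbf{PC2} alone would characterize realizability, which is precisely the error in the GL and B conditions that the paper's size-$9$ counterexample (Figure \ref{fig:min}) exposes.

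The repair is that no such equivalence is needed, because the proposition asserts a disjunction. For ``realizable $\Rightarrow$ (1) or (2)'', take the STZ diagonal matrix $D=\sum_{k\in K}E_{k,k}$ and split cases: if $K\subsetneq\{1,\dots,n\}$ you get (2); if $K=\{1,\dots,n\}$ then $DM+MD=0$, hence $M^2=M$ and you get (1). For the converse, each of (1) and (2) exhibits a diagonal matrix ($D=0$, respectively $D=\sum_{k\in K}E_{k,k}$) making $M+D$ idempotent, so STZ gives realizability. This case split on whether $M$ is idempotent is exactly the paper's (terse) proof; your write-up as planned would fail at the declared step that ``condition (1) is equivalent to the idempotency formulation I have already established,'' since that equivalence is false.
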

\begin{proof}
 Indeed, the first condition holds when the matrix $M$ is idempotent, $M^2 = M$ and by the STZ-condition, $D$ is zero matrix. Next, if $M$ is not idempotent the STZ conditions imply that a diagonal matrix $D$ mast have a form $D = \sum_{k \in K}E_k$ where $K \subsetneq \{1,\ldots, n\}$ and the statement follows.
\end{proof}

 We thus can reformulate the STZ-conditions as follows
 \begin{theorem}\label{reformSTZ}
  Let $G$ be a Gauss diagram, $M = (m_{i,j})_{1\le i,j \le n}$ its adjacency matrix. Then the diagram $G$ is realizable if and only if the following system of equations
    \[
   \left\{ (\alpha_i + \alpha_j)m_{i,j} = \langle m_i, m_j \rangle + m_{i,j}, \quad 1\le i,j \le n  \right.
  \]
 has a solution over a field $GF(2).$ 
 \end{theorem}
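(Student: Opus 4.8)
The plan is to identify a candidate solution $(\alpha_1,\ldots,\alpha_n)$ of the displayed system with the diagonal of the matrix $\Lambda$ occurring in the STZ-condition, and then to show by a direct expansion over $GF(2)$ that the idempotency equation $(M+\Lambda)^2 = M+\Lambda$ is equivalent, entry by entry, to that system. Concretely, I would set $\Lambda = \mathrm{diag}(\alpha_1,\ldots,\alpha_n)$, so that by the STZ-condition (and the Proposition just proved) realizability of $G$ is equivalent to the existence of scalars $\alpha_i \in GF(2)$ for which $M+\Lambda$ is idempotent. The whole argument is essentially one computation, so I do not expect a genuine obstacle; the only place needing care is the diagonal of the matrix equation, discussed below.

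Next I would expand the square. Since $\Lambda$ is diagonal with entries in $\{0,1\}$ we have $\Lambda^2 = \Lambda$ over $GF(2)$, so $(M+\Lambda)^2 = M^2 + M\Lambda + \Lambda M + \Lambda$. Subtracting $M+\Lambda$ and working over $GF(2)$ (where subtraction is addition), idempotency becomes $M^2 + M\Lambda + \Lambda M + M = 0$. I would then read off the $(i,j)$ entry of each term: by the reformulation recorded above, $(M^2)_{i,j} = \langle m_i, m_j\rangle$ because $M$ is symmetric; and since $\Lambda$ is diagonal, $(M\Lambda)_{i,j} = \alpha_j m_{i,j}$ and $(\Lambda M)_{i,j} = \alpha_i m_{i,j}$, whence $(M\Lambda + \Lambda M)_{i,j} = (\alpha_i + \alpha_j)m_{i,j}$. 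Thus the $(i,j)$ entry of the idempotency equation reads $\langle m_i, m_j\rangle + (\alpha_i + \alpha_j)m_{i,j} + m_{i,j} = 0$, which rearranges over $GF(2)$ to exactly $(\alpha_i + \alpha_j)m_{i,j} = \langle m_i, m_j\rangle + m_{i,j}$. Running this correspondence in both directions shows that the system has a solution $(\alpha_i)$ if and only if a suitable $\Lambda$ exists, i.e. if and only if $G$ is realizable.

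The one point that deserves care is the diagonal of the equation. For $i=j$ the interlacement graph has no loops, so $m_{i,i}=0$ and the left-hand side $(\alpha_i + \alpha_i)m_{i,i}$ vanishes identically; the equation collapses to $0 = \langle m_i, m_i\rangle = \sum_k m_{i,k}$, which over $GF(2)$ is the parity of the degree of vertex $i$. Hence the diagonal equations are solvable (trivially, for any choice of $\alpha_i$) precisely when \textbf{PC1} holds, so the single system simultaneously encodes both \textbf{PC1} and the second Rosenstiehl/idempotency condition, with the off-diagonal equations carrying the genuine content and the unknowns $\alpha_i$ free to be solved for. I would state this explicitly to confirm that no separate assumption of \textbf{PC1} is needed and that the equivalence with the STZ-condition is complete.
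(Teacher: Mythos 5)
Your proof is correct and takes essentially the same route as the paper: both translate the STZ condition (existence of a diagonal $\Lambda$ over $GF(2)$ with $M+\Lambda$ idempotent) into the entry-wise system, your term $M\Lambda + \Lambda M$ being exactly the paper's $\sum_k \alpha_k M_k$. Your explicit check of the diagonal entries (showing they reduce to \textbf{PC1}) is a detail the paper's proof leaves implicit, and it lets you avoid the paper's separate treatment of the case $K = \{1,\ldots,n\}$, but the underlying computation is the same.
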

 \begin{proof}
  Let $M$ be an adjacency matrix of a Gauss diagram $G$, say $M = (m_{i,j})_{1\le i,j \le n}$. Set $M':=M + M^2$, $M'=(m'_{i,j})_{1 \le i,j \le n}$. We have $m'_{i,j} = m_{i,j} + \langle m_i, m_j \rangle$, for any $1 \le i,j \le n.$

Next, since $M$ is symmetric with zero diagonal we then get
 \[
  \sum_{i=1}^n \alpha_i M_i = \begin{pmatrix}
   0 & (\alpha_1 + \alpha_2)m_{1,2} & (\alpha_1 + \alpha_3)m_{1,3} & \cdots & (\alpha_1 + \alpha_n)m_{1,n}\\
   (\alpha_1 + \alpha_2)m_{1,2} & 0 & (\alpha_2 + \alpha_3)m_{2,3} & \cdots & (\alpha_2 + \alpha_n)m_{2,n} \\
   \vdots & \vdots & \vdots & \ddots & \vdots & \\
   (\alpha_1 + \alpha_n)m_{1,n} & (\alpha_2 + \alpha_n)m_{2,n} & (\alpha_3 + \alpha_n)m_{3,n} & \cdots & 0
  \end{pmatrix}
 \]
 where $\alpha_1,\ldots, \alpha_n \in GF(2).$
 
 By the STZ-conditions, $M = M^2 + \sum_{k =1}^n \alpha_k M_k$, we thus get the following system of equations
 \[
 \{ (\alpha_i + \alpha_j)m_{i,j} = m'_{i,j}, \qquad 1 \le i,j \le n.
 \]
 
 By $m_{i,j}' = m_{i,j} + \langle m_i ,m_j \rangle$, the statement follows. It is clear that in the case $K = \{1,\ldots, n\}$ the statement holds because STZ-conditions implies that $M^2 = M$, i.e., all $\alpha_i = 1$. 
 \end{proof}

We now show that a Gauss diagram to be realisable it is enough to use STZ-conditions only instead of STZ-conditions plus \textbf{PC1}, \textbf{PC2}-conditions. 

 \begin{corollary}
  If a Gauss diagram does not satisfy \textbf{PC1} and \textbf{PC2} conditions then it is not realisable.
 \end{corollary}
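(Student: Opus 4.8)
The plan is to read the realizability criterion straight off the linear system in Theorem \ref{reformSTZ} and to notice that its ``degenerate'' equations --- those indexed by pairs $(i,j)$ with $m_{i,j}=0$ --- are precisely the \textbf{PC1} and \textbf{PC2} constraints, now appearing as fixed, variable-free conditions. Since these equations must hold for the system to admit any solution, failure of \textbf{PC1} or \textbf{PC2} renders the system inconsistent, and Theorem \ref{reformSTZ} then delivers non-realizability at once.

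Concretely, first I would invoke Theorem \ref{reformSTZ}: the diagram $G$ is realizable iff the system $(\alpha_i+\alpha_j)m_{i,j} = \langle m_i,m_j\rangle + m_{i,j}$, $1\le i,j\le n$, has a solution over $GF(2)$. The key observation is that whenever $m_{i,j}=0$ the left-hand side $(\alpha_i+\alpha_j)m_{i,j}$ vanishes for every choice of the $\alpha$'s, so the corresponding equation collapses to the constant constraint $\langle m_i,m_j\rangle = 0$, which involves no unknowns. Hence solvability of the full system forces, as a necessary condition, $\langle m_i,m_j\rangle = 0$ for every index pair with $m_{i,j}=0$.

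Next I would match these degenerate constraints to \textbf{PC1} and \textbf{PC2}. For the diagonal pairs $i=j$ we have $m_{i,i}=0$, and since all entries are $0$ or $1$, $\langle m_i,m_i\rangle = \sum_k m_{i,k}^2 = \sum_k m_{i,k}$, i.e.\ the degree of vertex $i$ modulo $2$; thus the diagonal equations read $\deg(i)\equiv 0 \pmod{2}$, which is exactly \textbf{PC1}. For off-diagonal pairs $i\ne j$ with $m_{i,j}=0$ (the non-adjacent pairs), $\langle m_i,m_j\rangle = \sum_k m_{i,k}m_{j,k}$ counts the common neighbours of $i$ and $j$ modulo $2$, so these equations assert that every pair of non-neighbouring vertices has an even number of common neighbours, which is exactly \textbf{PC2}.

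Finally, combining the two steps: if $G$ fails \textbf{PC1} or fails \textbf{PC2}, then at least one degenerate equation becomes $0=1$, the system of Theorem \ref{reformSTZ} has no solution, and $G$ is therefore not realizable --- the contrapositive of ``realizable $\Rightarrow$ \textbf{PC1} and \textbf{PC2}'', covering either reading of the statement. I expect the only delicate point to be the bookkeeping identifications $\langle m_i,m_i\rangle \equiv \deg(i)$ and $\langle m_i,m_j\rangle \equiv$ (number of common neighbours), both of which rest on the idempotence $m_{i,k}^2 = m_{i,k}$ over $GF(2)$ and on the zero diagonal of $M$; past that, the argument is simply reading the necessary conditions off the linear system.
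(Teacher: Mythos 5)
Your proof is correct and takes essentially the same route as the paper: both read the necessary conditions off the degenerate equations of the system in Theorem \ref{reformSTZ} (those indexed by pairs with $m_{i,j}=0$, including the diagonal) and conclude that failure of \textbf{PC1} or \textbf{PC2} produces an equation $0=1$, so the system is inconsistent and $G$ is not realizable. Your write-up is in fact more explicit than the paper's, which simply asserts the existence of a pair with $m_{i,j}=0$ and $\langle m_i, m_j\rangle = 1$ without spelling out the diagonal-versus-off-diagonal bookkeeping.
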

 \begin{proof}
  Indeed, let $M = (m_{i,j})_{1 \le i,j \le n}$ be its adjacency matrix. By the assumptions there exist at least two $i,j$ such that $m_{i,j} = 0$ and $\langle m_i, m_j \rangle = 1$. Hence the system contains the equations $0=1$ that gives a contradiction. 
 \end{proof}
 
However, in practice to know whether a Gauss diagram is realizable it is useful to require conditions \textbf{PC1}, \textbf{PC2} are holding. In this case we have the following
 
\begin{corollary}\label{cor_of_system}
 Let a Gauss diagram $G$ satisfy \textbf{PC1} and \textbf{PC2} conditions, $M = (m_{i,j})_{1 \le i,j \le n}$ its adjacency matrix. Then $G$ is realisable if and only if the following system of equations
 \[
  \alpha_i + \alpha_j = \langle m_i, m_j \rangle + 1, \qquad i,j \in K
 \]
 where $K \subseteq \{1,\ldots, n\}$ is a subset such that whenever $i,j \in K$ then $m_{i,j} = 1$. 
\end{corollary}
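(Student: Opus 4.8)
The plan is to read this off directly from Theorem \ref{reformSTZ} by inspecting that system one index pair $(i,j)$ at a time and separating the equations that constrain the unknowns $\alpha_k$ from those that do not. By Theorem \ref{reformSTZ}, $G$ is realizable if and only if the system $(\alpha_i+\alpha_j)m_{i,j} = \langle m_i,m_j\rangle + m_{i,j}$, for $1\le i,j\le n$, is solvable over $GF(2)$. The key observation is that the coefficient $m_{i,j}$ multiplying $\alpha_i+\alpha_j$ changes the character of the equation according to whether $i$ and $j$ are adjacent, so the whole system splits cleanly into an $\alpha$-free part and an $\alpha$-dependent part.

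First I would treat the pairs with $m_{i,j}=0$, which includes all diagonal entries since $M$ has zero diagonal. For such a pair the left-hand side vanishes and the equation collapses to $\langle m_i,m_j\rangle = 0$, which says nothing about the $\alpha_k$. I would then identify these residual conditions with \textbf{PC1} and \textbf{PC2}. For $i=j$ we have, over $GF(2)$, $\langle m_i,m_i\rangle = \sum_k m_{i,k}^2 = \sum_k m_{i,k}$, the degree of vertex $i$ modulo $2$, which vanishes exactly by \textbf{PC1}. For $i\ne j$ with $m_{i,j}=0$ the vertices are non-adjacent and $\langle m_i,m_j\rangle = \sum_k m_{i,k}m_{j,k}$ counts their common neighbours modulo $2$, which vanishes exactly by \textbf{PC2}. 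Hence, under the standing hypotheses, every equation with $m_{i,j}=0$ is automatically satisfied and contributes no constraint.

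Next I would treat the pairs with $m_{i,j}=1$, i.e. the adjacent pairs, collected by the index set $K$ in the statement (for which $m_{i,j}=1$). Here the left-hand side is $\alpha_i+\alpha_j$, so the equation reads $\alpha_i+\alpha_j = \langle m_i,m_j\rangle + 1$, which is precisely the asserted system. Combining the two cases, the full system of Theorem \ref{reformSTZ} is solvable if and only if all the $\alpha$-free equations hold \emph{and} the remaining subsystem over adjacent pairs is solvable; since \textbf{PC1} and \textbf{PC2} guarantee the former, solvability of the full system is equivalent to solvability of the reduced subsystem. Feeding this back into Theorem \ref{reformSTZ} gives the equivalence in both directions.

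The only point demanding care—the main obstacle, such as it is—is the combinatorial bookkeeping that identifies $\langle m_i,m_i\rangle$ with the parity of $\deg(i)$ and $\langle m_i,m_j\rangle$ for non-adjacent $i,j$ with the parity of the number of common neighbours, together with checking that the diagonal contributes nothing because $m_{i,i}=0$. Once these identifications are in place, the reduction is a purely formal split of the equations into the $\alpha$-free part absorbed by \textbf{PC1} and \textbf{PC2} and the $\alpha$-dependent part that is exactly the stated system.
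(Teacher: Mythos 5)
Your proposal is correct and follows essentially the same route as the paper: invoke Theorem \ref{reformSTZ}, observe that under \textbf{PC1} and \textbf{PC2} every equation with $m_{i,j}=0$ reduces to the identity $0=\langle m_i,m_j\rangle=0$, and conclude that solvability of the full system is equivalent to solvability of the subsystem over adjacent pairs. The paper's proof is just a one-line compression of this argument; your explicit treatment of the diagonal case via $\langle m_i,m_i\rangle=\deg(i)\bmod 2$ and \textbf{PC1} fills in a detail the paper leaves implicit.
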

\begin{proof}
 Indeed, if $G$ satisfies \textbf{PC1} and \textbf{PC2} conditions then whenever $m_{i,j} =0$ we have $\langle m_i, m_j \rangle = 0$ and by Proposition \ref{reformSTZ} the statement follows.
\end{proof}

\begin{corollary}\label{cor_for_graph}
 Let $I_G= (V,E)$ be an interlacement graph of a Guass diagram $G$, where $V$ is a set of vertices and $E \subseteq V \times V$ is a set of edges and let $M$ be its adjacency matrix. Consider the weighted graph $\widetilde{I_G}:=(V,E,\omega)$, where the edge weight function $\omega: E \to GF(2)$ is defined as follows $\omega: (i,j) \mapsto \langle m_i, m_j \rangle$. Then a Gauss diagram is realizable if and only if its interlacemnt graph is euler and for any cycle $C = (c_1,\ldots, c_\ell)$, we have 
 \[
 \sum_{i=1}^\ell \omega(c_i) \equiv \ell \bmod{2}.
 \]
\end{corollary}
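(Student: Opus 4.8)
The plan is to derive this graph‑theoretic criterion directly from the linear‑algebraic reformulation of the STZ‑conditions: reduce realizability to the solvability of a single $GF(2)$ potential‑difference equation on $\widetilde{I_G}$, and then apply the classical cycle‑consistency criterion for such equations. First I would fix the parity background. By the corollary stating that a diagram violating \textbf{PC1} or \textbf{PC2} is never realizable, these two conditions may be taken as standing hypotheses; here ``$I_G$ is euler'' is exactly \textbf{PC1}, since $\langle m_i, m_i\rangle = \deg(i) \equiv 0 \pmod 2$, while \textbf{PC2} is the statement that $\langle m_i, m_j\rangle = 0$ on every non‑edge. Under these hypotheses, Corollary \ref{cor_of_system} says that $G$ is realizable if and only if
\[
\alpha_i + \alpha_j = \langle m_i, m_j\rangle + 1 = \omega(i,j) + 1, \qquad (i,j)\in E,
\]
has a solution $(\alpha_1,\dots,\alpha_n)\in GF(2)^n$. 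Thus everything reduces to deciding solvability of this system of edge equations.

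Next I would recognize this as a coboundary equation over $GF(2)$: writing $\delta\colon GF(2)^V \to GF(2)^E$ for $\alpha \mapsto (\alpha_i+\alpha_j)_{(i,j)\in E}$, the system reads $\delta\alpha = b$ with $b_{(i,j)} = \omega(i,j)+1$, so it is solvable exactly when $b$ lies in the image of $\delta$, i.e.\ in the cut space of $I_G$. The workhorse is the standard fact that over $GF(2)$ the image of $\delta$ is the orthogonal complement of the cycle space; hence $\delta\alpha = b$ is solvable if and only if $\sum_{e\in C} b_e = 0$ for every cycle $C$ (equivalently, for every cycle in a fixed basis). Substituting $b_e = \omega(e)+1$ and summing over a cycle $C=(c_1,\dots,c_\ell)$ of length $\ell$ gives
\[
\sum_{i=1}^{\ell}\bigl(\omega(c_i)+1\bigr) = \sum_{i=1}^{\ell}\omega(c_i) + \ell \equiv 0 \pmod 2,
\]
which is precisely $\sum_{i=1}^{\ell}\omega(c_i)\equiv \ell \pmod 2$, the condition in the statement.

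I would establish the cycle‑consistency fact in the two usual directions. Necessity is immediate: summing $\alpha_i+\alpha_j = b_{(i,j)}$ over the edges of a cycle makes each cycle vertex contribute its label exactly twice, so the left side vanishes and $\sum_{e\in C} b_e = 0$. For sufficiency I would fix a spanning forest of $I_G$, assign the $\alpha$‑values on the roots arbitrarily, and propagate along tree edges (each tree edge forces one new value); the cycle condition on the fundamental cycles then guarantees that every non‑tree edge equation holds automatically. The only genuinely routine obstacle here is the bookkeeping — verifying that a cycle basis suffices and that the construction is independent of the chosen forest — but this is the classical $GF(2)$‑consistency argument and carries over verbatim.

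The subtlety I would flag as the main obstacle is the role of \textbf{PC2}. The cycle condition records only the edge equations, so ``euler $+$ cycle condition'' captures \textbf{PC1} together with solvability of the edge system, but \emph{not} \textbf{PC2}, and \textbf{PC2} is not a consequence of the former two. For example $C_6$ has all degrees even, all edge‑weights $\omega$ equal to $0$, and a single fundamental cycle of even length $6$, so it satisfies the Euler and cycle conditions; yet two vertices at distance two share exactly one common neighbour, so $\langle m_i,m_j\rangle=1$ on that non‑edge, \textbf{PC2} fails, and $C_6$ is non‑realizable. Hence the statement must be understood with ``euler'' meaning the full strong parity condition \textbf{PC1}\,$\wedge$\,\textbf{PC2} (equivalently, with the weights $\langle m_i,m_j\rangle$ also required to vanish on non‑edges). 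With that reading, chaining the reductions above yields exactly the three‑way equivalence: $G$ is realizable $\iff$ \textbf{PC1}\,$\wedge$\,\textbf{PC2} hold and $\sum_{i=1}^{\ell}\omega(c_i)\equiv\ell \pmod 2$ for every cycle.
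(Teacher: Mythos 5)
Your proof follows essentially the same route as the paper's: both invoke Corollary \ref{cor_of_system} to reduce realizability to solvability of the edge system $\alpha_i+\alpha_j=\omega(i,j)+1$ over $GF(2)$, and then apply the standard cycle-consistency criterion for such systems (summing the equations around each cycle), which the paper compresses into its one-line ``the statement thus follows.'' Your caveat about \textbf{PC2} is also correct and consistent with the paper's own argument: since the paper's proof rests on Corollary \ref{cor_of_system}, it silently inherits \textbf{PC1} and \textbf{PC2} as hypotheses, so the word ``euler'' in the statement must indeed be read as the full strong parity condition --- your $C_6$ example rightly shows the literal reading would be false.
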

\begin{proof}
 Indeed, by Corollary \ref{cor_of_system}, 
 \[
  \begin{cases}
   \alpha_{i_1} + \alpha_{i_2} \equiv \omega((i_1,i_2)) + 1, \\
   \phantom{\alpha_{i_1}\,}\vdots \phantom{+\alpha_{i_2}} \ddots \phantom{\omega((i_1,i_2))} \vdots \\
   \alpha_{i_\ell} + \alpha_{i_1} \equiv \omega((i_\ell, i_1)) + 1,
  \end{cases} 
 \]
 where we have put $c_1 = (i_1,i_2), \ldots, c_\ell = (i_\ell, i_1)$, and the statement thus follows.
\end{proof}

\begin{example}
 Let us consider the following Gauss diagram (see Fig.\ref{fig_for_STZ1}). 
 \begin{figure}[h!]
     \centering
     \begin{tikzpicture}[scale = 0.5]
      \draw[line width =2] (0,0) circle (3);
      {\foreach \angle/ \label in
       { 90/4, 120/3, 150/2, 180/1, 210/5, 240/6, 270/3, 300/4, 330/6, 
        0/2, 30/1, 60/5 
        }
     {
        \fill(\angle:3.5) node{$\label$};
        \fill(\angle:3) circle (3pt) ;
      }
    }
  \draw[line width = 2] (90:3) -- (300:3);
  \draw[line width = 2] (120:3) -- (270:3);
  \draw[line width = 2] (150:3) -- (0:3);
  \draw[line width = 2] (180:3) -- (30:3);
  \draw[line width = 2] (210:3) -- (60:3);
  \draw[line width = 2] (240:3) -- (330:3);
  
  \begin{scope}[xshift = 10cm]
     {\foreach \angle/ \label in
       { 90/1, 150/6,  210/5, 270/4,  330/3, 30/2 
        }
     {
        \fill(\angle:3.5) node{$\label$};
        \fill(\angle:3) circle (5pt) ;
      }
    }
     
  \draw[line width =2] (90:3) -- (30:3);     
  \draw (90:3) -- (210:3);
  \draw (90:3) -- (330:3);
  \draw (90:3) -- (270:3);
  \draw[line width =2] (90:3) -- (210:3);
  
  \draw (30:3) -- (330:3);
  \draw (30:3) -- (270:3);
  \draw[line width =2] (30:3) -- (210:3);
  
  \draw (330:3) -- (210:3);
  \draw (330:3) -- (150:3);
  
  \draw (270:3) -- (210:3);
  \draw (270:3) -- (150:3);

  \end{scope}
     \end{tikzpicture}
     \caption{The Gauss diagram $G$ and its weighted graph $I_G$; the thick edges have weight equal to $1$ and the other have weight equal to 0. We see that this graph satisfies the conditions of Corollary \ref{cor_for_graph} and thus the diagram is realizable}
     \label{fig_for_STZ1}
 \end{figure}
 
 We get
 \[
 M = \begin{pmatrix}
    0 & 1 & 1 & 1& 1& 0 \\
    1 & 0 & 1 & 1& 1& 0 \\
    1 & 1 & 0 & 0 & 1& 1 \\
    1 & 1 & 0 & 0 & 1& 1 \\
    1 & 1 & 1 & 1 & 0& 0 \\
    0 & 0 & 1 & 1 & 0& 0
 \end{pmatrix} \qquad
  M^2 = \begin{pmatrix}
    0 & 1 & 0 & 0 & 1 & 0 \\
    1 & 0 & 0 & 0 & 1 & 0 \\
    0 & 0 & 0 & 0 & 0 & 0 \\
    0 & 0 & 0 & 0 & 0 & 0 \\
    1 & 1 & 0 & 0 & 0 & 0 \\
    0 & 0 & 0 & 0 & 0 &  0
 \end{pmatrix} \qquad
 M':= M^2 + M = \begin{pmatrix}
    0 & 0 & 1 & 1 & 0 & 0 \\
    0 & 0 & 1 & 1 & 0 & 0 \\
    1 & 1 & 0 & 0 & 1 & 1 \\
    1 & 1 & 0 & 0 & 1 & 1 \\
    0 & 0 & 1 & 1 & 0 & 0 \\
    0 & 0 & 1 & 1 & 0 & 0
 \end{pmatrix}
 \]

We thus have the following system of equations
 \[
  \begin{cases}
   \alpha_1 + \alpha_2  = 0 \\
   \alpha_1 + \alpha_3  = 1 \\
   \alpha_1 + \alpha_4  = 1 \\
   \alpha_1 + \alpha_5  = 0 \\
   \alpha_2 + \alpha_3  = 1 \\
   \alpha_2 + \alpha_4  = 1 \\
   \alpha_2 + \alpha_5  = 0 \\
   \alpha_3 + \alpha_5  = 1 \\
   \alpha_3 + \alpha_6  = 1 \\
   \alpha_4 + \alpha_5  = 1 \\
   \alpha_4 + \alpha_6  = 1
  \end{cases}
 \]
 
 It follows that $\alpha_1 = c, \alpha_2 = c, \alpha_3 = 1+c, \alpha_4 = 1+ c, \alpha_5 = c, \alpha_6 = c$. Thus $D$ can be either $D = E_1 + E_2 + E_5 + E_6$ or $D = E_3 + E_4.$ Hence the corresponding Gauss diagram is realisable.
\end{example}

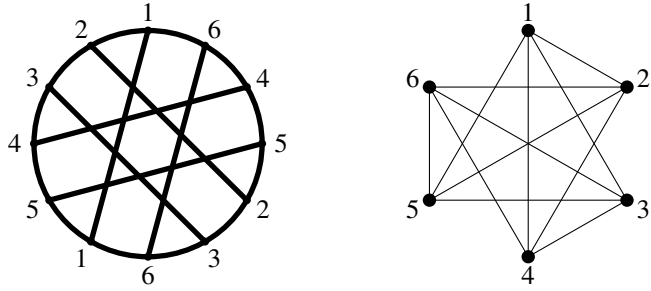
\begin{figure}[h!]
     \centering
     \begin{tikzpicture}[scale = 0.5]
      \draw[line width =2] (0,0) circle (3);
      {\foreach \angle/ \label in
       { 90/1, 120/2, 150/3, 180/4, 210/5, 240/1, 270/6, 300/3, 330/2, 
        0/5, 30/4, 60/6 
        }
     {
        \fill(\angle:3.5) node{$\label$};
        \fill(\angle:3) circle (3pt) ;
      }
    }
  \draw[line width = 2] (90:3) -- (240:3);
  \draw[line width = 2] (120:3) -- (330:3);
  \draw[line width = 2] (150:3) -- (300:3);
  \draw[line width = 2] (180:3) -- (30:3);
  \draw[line width = 2] (210:3) -- (0:3);
  \draw[line width = 2] (270:3) -- (60:3);
  
  \begin{scope}[xshift = 10cm]
     {\foreach \angle/ \label in
       { 90/1, 150/6,  210/5, 270/4,  330/3, 30/2 
        }
     {
        \fill(\angle:3.5) node{$\label$};
        \fill(\angle:3) circle (5pt) ;
      }
    }
     
  \draw (90:3) -- (30:3);     
  \draw (90:3) -- (330:3);
  \draw (90:3) -- (270:3);
  \draw (90:3) -- (210:3);

  \draw (30:3) -- (270:3);
  \draw (30:3) -- (210:3);
  \draw (30:3) -- (150:3);
  
  \draw (330:3) -- (270:3);
  \draw (330:3) -- (210:3);
  \draw (330:3) -- (150:3);
  
  \draw (270:3) -- (150:3);
  
  \draw (210:3) -- (150:3);

  \end{scope}
     \end{tikzpicture}
     \caption{Let us consider the following Gauss diagram $G$. It is easy to see that $M^2 = 0$ for its adjacency matrix $M$. Next, consider its weighted graph $I_G$; all its edges have thus weight equal to $0$. We see that this graph does not satisfy the conditions of Corollary \ref{cor_for_graph} because there exist cycles of length $3$ and thus the diagram is not realisable.}
     \label{fig_for_STZ1_not_realizable}
 \end{figure}

\subsection{A practical algorithm for testing if a graph is realizable} \label{sub:practical-algorithm-graph-realizable}

According to Theorem \ref{reformSTZ} a Gauss graph is realizable if and only if working in the two-element field $GF(2)$ we can choose values of parameters $\alpha_i$, $i=1,\dots,n$ in such a way that all equalities of the following form become true.

    \[
   \left\{ (\alpha_i + \alpha_j)m_{i,j} = \langle m_i, m_j \rangle + m_{i,j}, \quad 1\le i,j \le n  \right.
  \]

Recall that each parameter $\alpha_i$ can be treated as the color of the $i$-th vertex; the exact conditions which this $2$-colouring should satisfy will be revealed in the following paragraph.

Since we work in $GF(2)$, we can write an exhaustive list of all possible values of coefficients $m_{i,j}$ and $\langle m_i, m_j \rangle$. If $m_{i,j} = 0$ and $\langle m_i, m_j \rangle = 0$ then both the left-hand side and the right-hand side of the equality are $0$, so the equality is true. If $m_{i,j} = 0$ and $\langle m_i, m_j \rangle = 1$ then the left-hand side is $0$ and the right-hand side is $1$, therefore, the equality is false irrespective of the choice of $\alpha_i, \alpha_j$. In other words, if two vertices are not adjacent and they have an odd number of common neighbours, the graph is not realisable (this condition is the same as {\bf PC2}). If $m_{i,j} = 1$ and $\langle m_i, m_j \rangle = 0$ then the left-hand side is $\alpha_i + \alpha_j$ and the right-hand side is $1$. Therefore, we need to choose $\alpha_i, \alpha_j$ so that $\alpha_i \neq \alpha_j$. If $m_{i,j} = 1$ and $\langle m_i, m_j \rangle = 1$ then the left-hand side is $\alpha_i + \alpha_j$ and the right-hand side is $0$. Therefore, we need to choose $\alpha_i, \alpha_j$ so that $\alpha_i = \alpha_j$. 

Thus, the following algorithm can be used for checking if a Gauss graph is realisable. For convenience, we implement the conditions on the $2$-coloring from the description above as checking that a suitably modified graph is bipartite.

\begin{enumerate}
\item For each pair of vertices $u,v$ which are not adjacent to each other, count the number of their common neighbours. This number must be even for every such pair $u,v$.
\item Find all pairs of vertices $u,v$ such that $u,v$ are adjacent to each other and the number of their common neighbours is odd. After we have found all such pairs of vertices, for each such pair $u,v$ replace the edge $u, v$ by two new edges $u, w_{u,v}$ and $v, w_{u,v}$, where $w_{u,v}$ is a new vertex. Then check that the modified graph is bipartite. 
\end{enumerate}

\section{Comparison with planar Gauss diagrams}

Let us say that a Gauss diagram is \emph{planar} if all intersections of chords can be removed by laying out some of the chords on the outside of the circle. Planar Gauss diagrams have been employed in some steps of some algorithms for checking realizability of Gauss diagrams, starting from the first description of realizability, in which Dehn calls them `tree-and-onion diagrams' \cite{dehn1936}. Lemma 3 in \cite{DBLP:journals/jal/RosentiehlT84} is very close to our Proposition \ref{prop:planar-is-bipartite} below. However, we have never encountered a description of planarity of a Gauss diagram formulated on its own, and now is a good opportunity to do so, because the descriptions we give below look pleasantly similar to (but simpler than) Theorem \ref{reformSTZ} and the algorithm in Subsection \ref{sub:practical-algorithm-graph-realizable} above. 

\begin{lemma}
  Let $G$ be a Gauss diagram, $M = (m_{i,j})_{1\le i,j \le n}$ its adjacency matrix. For each non-zero entry $m_{i,j}$, consider an equation $\alpha_i + \alpha_j = 1$.
  The diagram $G$ is realizable if and only the system consisting of these equations
 has a solution over a field $GF(2).$ 
\end{lemma}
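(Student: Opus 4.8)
The plan is to prove the stated characterization while being explicit, rather than silent, about which property the displayed system actually detects. Over $GF(2)$ the equation $\alpha_i + \alpha_j = 1$ says exactly $\alpha_i \ne \alpha_j$, so imposing it for every interlacing pair (every $m_{i,j}=1$) makes a solution nothing more than an assignment $V \to GF(2)$ taking different values at the two ends of each edge of the interlacement graph $I_G$; the system is therefore solvable precisely when $I_G$ is bipartite, i.e.\ has no odd cycle. This is the simpler analogue of the realizability system of Theorem~\ref{reformSTZ}, whose right-hand sides carry the extra term $\langle m_i, m_j \rangle$. Since bipartiteness plainly is not realizability---the realizable diagram of Figure~\ref{fig:gauss-ex}, with Gauss word $12334124$, has the triangle on chords $1,2,4$ in $I_G$, hence an odd cycle and an unsolvable system---I read ``realizable'' in the statement as ``planar'' in the sense defined at the start of this section, and prove that $G$ is planar if and only if the system is solvable.

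With the algebraic step above reducing solvability to bipartiteness of $I_G$, the remaining work is the geometric equivalence: $G$ is planar if and only if $I_G$ is bipartite. First I would interpret the two colour classes as the chords to be laid \emph{inside} and the chords to be laid \emph{outside} the circle. The elementary fact I would use is that two chords drawn on the same side of the circle cross if and only if their four endpoints alternate around the circumference, that is if and only if the two chords interlace; two non-interlacing chords, being nested or disjoint, can always share a side without crossing. Consequently a crossing-free inside/outside layout is the same data as a partition of the chords into two parts each free of interlacing pairs, which is exactly a proper $2$-colouring of $I_G$. Chaining the equivalences gives $G$ planar $\iff$ $I_G$ bipartite $\iff$ the system is solvable.

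I expect the difficulty to be bookkeeping rather than conceptual. The one step needing care is the crossing criterion on the \emph{outer} region of the circle: I must confirm that two interlacing chords still cross when both are pushed to the outside, and that nested or disjoint chords still avoid each other there, so that the inside/outside $\leftrightarrow$ two-colouring dictionary is faithful (including the trivial handling of isolated vertices and of diagrams with several connected components). The genuinely delicate point, flagged already, is not in the argument but in the wording: this lemma characterizes planarity, and the Figure~\ref{fig:gauss-ex} example shows it would be false if ``realizable'' were taken literally, so I would either correct the statement to ``planar'' or note the reading explicitly.
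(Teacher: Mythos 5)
Your proof is correct and takes essentially the same approach as the paper's own: the paper's proof likewise treats the lemma as a statement about \emph{planarity} (it encodes an inside/outside placement of chords by the values $\alpha_i$ and observes that solving the system is exactly a crossing-free layout), so your reading of ``realizable'' as a slip for ``planar'' is exactly right, and is confirmed by Proposition~\ref{prop:planar-is-bipartite}, which records the resulting bipartiteness characterization. Your added care --- the explicit counterexample from Figure~\ref{fig:gauss-ex} showing the literal wording is false, and the check that interlacing chords still cross when both are laid outside the circle --- only fills in details the paper leaves implicit.
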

\begin{proof}
 Indeed, our aim is to lay out the chords inside and outside the circle to remove all intersections. We can conveniently encode this process by saying that $\alpha_i = 0$ [or $\alpha_i = 0$] means that the $i$-th chord is drawn inside [or outside] the circle. It is easy to see that solving the system of equations in the lemma is equivalent to laying out the chords in a way which removes all crossings. 
\end{proof}

\begin{proposition}
 \label{prop:planar-is-bipartite}
A Gauss diagram is planar if and only if its circle graph is bipartite. 
\end{proposition}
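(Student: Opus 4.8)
The plan is to prove the biconditional in Proposition \ref{prop:planar-is-bipartite} by directly translating the definition of planarity into the graph-theoretic notion of bipartiteness, using the lemma immediately above as the bridge. Recall that a Gauss diagram is planar precisely when its chords can be distributed between the inside and outside of the circle so that no two chords drawn on the same side intersect. I would encode this choice exactly as in the preceding lemma: assign to each chord $i$ a value $\alpha_i \in GF(2)$, where $\alpha_i = 0$ means the $i$-th chord is drawn inside and $\alpha_i = 1$ means it is drawn outside. The key observation is that two chords can be drawn without crossing on the same side of the circle if and only if they do not interlace in the original diagram; equivalently, two interlacing chords must be separated onto opposite sides.

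First I would make precise the claim that two interlacing chords drawn on the same side of the circle necessarily cross, whereas two interlacing chords drawn on opposite sides never cross, and that non-interlacing chords can always coexist on the same side. This is the only genuinely geometric step, and it is where I expect the main obstacle to lie: one must argue carefully that laying a chord outside the circle as a simple arc is always possible without introducing spurious crossings among chords that are not forced to cross, i.e.\ that the only crossing constraints are exactly the interlacement constraints. Once this is established, the condition ``remove all crossings'' becomes the purely combinatorial condition that every pair of interlacing chords receives opposite $\alpha$-values.

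Next I would phrase this combinatorial condition in terms of the circle graph $I_G$. Since an edge of $I_G$ is present exactly between two interlacing chords, the requirement $\alpha_i \neq \alpha_j$ for every interlacing pair is precisely the requirement that the vertex-labelling $\alpha \colon V \to GF(2)$ assigns different labels to the two endpoints of every edge. That is exactly a proper $2$-colouring of $I_G$. Hence a valid inside/outside layout exists if and only if $I_G$ admits a proper $2$-colouring, which by the standard characterisation is equivalent to $I_G$ being bipartite.

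Finally I would tie the two directions together. For the forward direction, a planar layout yields such a labelling $\alpha$ by recording which side each chord lies on, and by the geometric step this labelling separates every interlacing pair, so $I_G$ is properly $2$-coloured and hence bipartite. For the converse, a proper $2$-colouring of $I_G$ gives values $\alpha_i$ with $\alpha_i \neq \alpha_j$ on every edge; drawing the chords coloured $0$ inside and those coloured $1$ outside produces a crossing-free layout by the same geometric step, so $G$ is planar. Since the equations $\alpha_i + \alpha_j = 1$ over $GF(2)$ of the preceding lemma say exactly that the endpoints of each edge differ, I can also note that solvability of that system is itself equivalent to bipartiteness, which makes the proposition an immediate corollary of the lemma.
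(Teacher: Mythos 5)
Your proof is correct and takes essentially the same route as the paper: the paper also deduces the proposition from the preceding lemma, observing that solvability of the system $\alpha_i + \alpha_j = 1$ over $GF(2)$ (one equation per edge of the circle graph) is precisely the existence of a proper $2$-colouring, i.e.\ bipartiteness. The only difference is that you spell out the geometric step (interlacing chords on the same side of the circle must cross, non-interlacing ones can coexist, and opposite sides never cross) which the paper's lemma and proposition dismiss as ``easy to see.''
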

\begin{proof}
It is easy to see that a solution to the system of equations in the lemma exists if and only if the graph is bipartite. 
\end{proof}

\section{What was wrong in the GL description of realizability}

It is claimed in {\cite[Theorem 3.11]{doi:10.1142/S0218216520500315}} that 
 a Gauss diagram $G$ is realizable if and only if the following conditions hold:
  \begin{itemize}
    \item[(1)] the number of all chords that cross any two non-intersecting chords and the number of all chords intersecting each chord are both even (including zero),
    \item[(2)] for every chord $c \in G$ the Gauss diagram $\widehat{G}_c$ (= Conway's smoothing of the chord $c$) also satisfies the above condition.
  \end{itemize}

 It was shown (see proof of Theorem \cite[Theorem 4.3]{doi:10.1142/S0218216520500315}) that these conditions can be reformulated in term of adjacency matrix $M = (m_{i,j})_{1 \le i,j \le n}$ of $G$ as follows:
   \begin{enumerate}
      \item $\langle m_i, m_j \rangle \equiv \bmod{2}$, $1 \le i \le n$,
      \item $\langle m_i, m_j \rangle \equiv 0 \bmod{2}$, if the corresponding chords do not intersect,
      \item $\langle m_i, m_j \rangle + \langle m_i, m_k \rangle + \langle m_j, m_k \rangle \equiv 1 \bmod{2}$, if the corresponding chords intersect pairwise.
  \end{enumerate}

And we thus get a partial case of the STZ-conditions. Indeed, we get the system of equations (Corollary \ref{cor_of_system}) for three pairwise intersecting chords only. This is why the GL conditions are not complete. Finally, since \textbf{B3} conditions are mentioned in \cite[Proof of Theorem 4.3]{doi:10.1142/S0218216520500315} and it can be reformulated as (3) we then get the same reasons of its incompleteness.

Also in \cite{doi:10.1142/S0218216520500315} it was announced that if a Gauss diagram $G$ is realizable (say by a plane curve $\mathscr{C}(G)$), then to every closed path (say) $\mathscr{P}$ along $\mathscr{C}(G)$ we can associate a colouring of another part of $\mathscr{C}(G)$ into two colours (roughly speaking we get ``inner'' and ``outer'' sides of $\mathscr{P}$, \textit{cf.} Jordan curve Theorem). If a Gauss diagram is not realizable, then (\cite[Theorem 1.A]{gpv-ficvk-00}) it defines a virtual plane curve $\mathscr{C}(G)$. Next, it was shown that there exists a closed path along $\mathscr{C}(G)$ (a counter) to which we cannot associate a well-defined colouring of $\mathscr{C}(G)$, \textit{i.e.,} $\mathscr{C}(G)$ contains a path which is coloured into two colours.

Then in \cite{doi:10.1142/S0218216520500315} it was shown that in the case of non realizability of a Gauss diagram we always can find a special kind of such counters ($X$-counter), in terms of Gauss diagrams this counter is made by two intersecting chords. The counterexamples in Section \ref{sec:counterexamples} show that this statement fails. However the idea that if a Gauss diagram is not realizable then there is a counter to which we cannot associate a well-defined colouring is still true. We present the corresponding counters below.

%======================FIGURE 8 =========================================================================================
\begin{figure}[h!]
 \begin{tikzpicture}[scale=0.8]
   \begin{scope}[scale=0.7]
    \draw[line width =5] (0,0) circle (3);
    \draw[line width =4,white] (0,0) circle (3);
  %arcs===========================================
     \begin{scope}[rotate =144]
      \draw[line width=4, gray] (3,0) arc(0:36:3);
     \end{scope}
     
     \begin{scope}[rotate =306]
      \draw[line width=4, gray] (3,0) arc(0:36:3);
     \end{scope}
     
     \begin{scope}[rotate =0]
      \draw[line width=1.5, black] (3,0) arc(0:90:3);
     \end{scope}
     
  %================================================

 \draw[line width = 2] (90:3) -- (324:3);
 \draw[line width = 2] (162:3) -- (360:3);
 
 \draw[line width = 1] (18:3) -- (72:3);
 \draw[line width = 1] (126:3) -- (216:3);
 \draw[line width = 1] (108:3) -- (234:3);
 \draw[line width = 1] (198:3) -- (288:3);
 
 \draw[line width = 3, gray] (144:3) -- (342:3);
 \draw[line width = 3, gray] (180:3) -- (306:3);
 
 \draw[line width = 2, dotted] (252:3) -- (54:3);
 \draw[line width = 2, dotted] (36:3) -- (270:3);

%=============================================================
  
{\foreach \angle/ \label in
   { 90/9, 108/6, 126/1, 144/2, 162/3, 180/4, 198/5, 216/1, 234/6,252/7, 
    270/8, 288/5, 306/4, 324/9, 342/2, 360/3, 18/10, 36/8,   54/7, 72/10
   }
   {
    \fill(\angle:3.5) node{$\label$};
    \fill(\angle:3) circle (3pt) ;
    }
}
  \end{scope}
  %222222222222222222222222222222222222222222222222222222222222222222222222222
  \begin{scope}[xshift = 6 cm,scale=0.7]
    \draw[line width =5] (0,0) circle (3);
    \draw[line width =4,white] (0,0) circle (3);
  %arcs===========================================
     \begin{scope}[rotate =144]
      \draw[line width=4, gray] (3,0) arc(0:54:3);
     \end{scope}
     
     \begin{scope}[rotate =324]
      \draw[line width=4, gray] (3,0) arc(0:18:3);
     \end{scope}
     
     \begin{scope}[rotate =0]
      \draw[line width=1.5, black] (3,0) arc(0:90:3);
     \end{scope}
     
%================================================

 \draw[line width = 2] (162:3) -- (360:3);
 \draw[line width = 2] (90:3) -- (180:3);
 
 \draw (108:3) -- (234:3);
 \draw (126:3) -- (253:3);
 \draw (216:3) -- (306:3);
 \draw (216:3) -- (306:3);
 \draw (72:3) -- (18:3);
 
 \draw[line width = 2, dotted] (288:3) -- (54:3);
 \draw[line width = 2, dotted] (36:3) -- (270:3);

 \draw[line width = 3, gray] (144:3) -- (342:3);
 \draw[line width = 3, gray] (324:3) -- (198:3);
%=============================================================
  
{\foreach \angle/ \label in
   { 90/1, 108/2, 126/3, 144/4, 162/5, 180/1, 198/6, 216/7, 234/2,252/3, 
    270/8, 288/9, 306/7, 324/6, 342/4, 360/5, 18/10, 36/8,   54/9, 72/10
   }
   {
    \fill(\angle:3.5) node{$\label$};
    \fill(\angle:3) circle (3pt) ;
    }
}
  \end{scope}
 
%333333333333333333333333333333333333333333333333333333333333333333333333333333333333333
\begin{scope}[xshift = 12cm,scale = 0.7]
    \draw[line width =5] (0,0) circle (3);
    \draw[line width =4,white] (0,0) circle (3);
  %arcs===========================================
     \begin{scope}[rotate = 216]
      \draw[line width=4, gray] (3,0) arc(0:36:3);
     \end{scope}
     
     \begin{scope}[rotate =54]
      \draw[line width=4, gray] (3,0) arc(0:36:3);
     \end{scope}
     
     \begin{scope}[rotate =324]
      \draw[line width=1.5, black] (3,0) arc(0:54:3);
     \end{scope}
%================================================

 \draw[line width = 2] (324:3) -- (234:3);
 \draw[line width = 2] (72:3) -- (18:3);
 
 \draw (108:3) -- (198:3);
 \draw (162:3) -- (288:3);
 \draw (180:3) -- (306:3);
 \draw (36:3) -- (270:3);
 \draw (72:3) -- (18:3);
 
 \draw[line width = 2, dotted] (360:3) -- (126:3);
 \draw[line width = 2, dotted] (144:3) -- (342:3);
 
 \draw[line width = 3, gray] (90:3) -- (216:3);
 \draw[line width = 3, gray] (252:3) -- (54:3);
%=============================================================
 {\foreach \angle/ \label in
   { 90/6, 108/1, 126/2, 144/3, 162/4, 180/5, 198/1, 216/6, 234/7,252/8, 
    270/9, 288/4, 306/5, 324/7, 342/3, 360/2, 18/10, 36/9,   54/8, 72/10
   }
   {
    \fill(\angle:3.5) node{$\label$};
    \fill(\angle:3) circle (3pt) ;
    }
}
\end{scope}
%444444444444444444444444444444444444444444444444444444444444444444444444444
\begin{scope}[yshift = -6 cm,scale = 0.7]
    \draw[line width =5] (0,0) circle (3);
    \draw[line width =4,white] (0,0) circle (3);
  %arcs===========================================
     \begin{scope}[rotate = 144]
      \draw[line width=4, gray] (3,0) arc(0:54:3);
     \end{scope}
     
     \begin{scope}[rotate =324]
      \draw[line width=4, gray] (3,0) arc(0:18:3);
     \end{scope}
     
     \begin{scope}[rotate =252]
      \draw[line width=1.5, black] (3,0) arc(0:54:3);
     \end{scope}

%================================================

 \draw[line width = 2] (252:3) -- (162:3);
 \draw[line width = 2] (306:3) -- (180:3);
 
 \draw (90:3) -- (216:3);
 \draw (108:3) -- (234:3);
 \draw (126:3) -- (360:3);
 \draw (36:3) -- (270:3);
 \draw (72:3) -- (18:3);
 
 \draw[line width = 2, dotted] (36:3) -- (270:3);
 \draw[line width = 2, dotted] (54:3) -- (288:3);
 
 \draw[line width = 3, gray] (144:3) -- (342:3);
 \draw[line width = 3, gray] (198:3) -- (324:3);
%=============================================================
  
{\foreach \angle/ \label in
   { 90/1, 108/2, 126/3, 144/4, 162/5, 180/6, 198/7, 216/1, 234/2, 252/5, 
    270/8, 288/9, 306/6, 324/7, 342/4, 360/3, 18/10, 36/8,   54/9, 72/10
   }
   {
    \fill(\angle:3.5) node{$\label$};
    \fill(\angle:3) circle (3pt) ;
    }
}
  \end{scope}
%55555555555555555555555555555555555555555555555555555555555555555555555555555555555
   \begin{scope}[yshift= - 6cm,xshift = 6cm, scale = 0.7]
    \draw[line width =5] (0,0) circle (3);
    \draw[line width =4,white] (0,0) circle (3);
  %arcs===========================================
     \begin{scope}[rotate = 162]
      \draw[line width=4, gray] (3,0) arc(0:18:3);
     \end{scope}
     
     \begin{scope}[rotate =306]
      \draw[line width=4, gray] (3,0) arc(0:90:3);
     \end{scope}
     
     \begin{scope}[rotate =198]
      \draw[line width=1.5, black] (3,0) arc(0:72:3);
     \end{scope}
     
     \begin{scope}[rotate =72]
      \draw[line width=1.5, black] (3,0) arc(0:72:3);
     \end{scope}
 
%================================================

 \draw[line width = 2] (144:3) -- (18:3);
 \draw[line width = 2] (270:3) -- (360:3);
 \draw[line width = 2] (72:3) -- (342:3);
 \draw[line width = 2] (324:3) -- (198:3);

 \draw (108:3) -- (234:3);
 \draw (126:3) -- (216:3);

 \draw[line width = 2, dotted] (252:3) -- (54:3);
 \draw[line width = 2, dotted] (288:3) -- (90:3);
 
 \draw[line width = 3, gray] (162:3) -- (36:3);
 \draw[line width = 3, gray] (306:3) -- (180:3);

%=============================================================
  
{\foreach \angle/ \label in
   { 90/9, 108/1, 126/2, 144/3, 162/4, 180/5, 198/6, 216/2, 234/1, 252/7, 
    270/8, 288/9, 306/5, 324/6, 342/10, 360/8, 18/3, 36/4,   54/7, 72/10
   }
   {
    \fill(\angle:3.5) node{$\label$};
    \fill(\angle:3) circle (3pt) ;
    }
}
  \end{scope}
%66666666666666666666666666666666666666666666666666666666666666666666666666666666
  \begin{scope}[yshift= - 6cm,xshift = 12cm, scale = 0.7]
    \draw[line width =5] (0,0) circle (3);
    \draw[line width =4,white] (0,0) circle (3);
  %arcs===========================================
     \begin{scope}[rotate = 126]
      \draw[line width=4, gray] (3,0) arc(0:36:3);
     \end{scope}
     
     \begin{scope}[rotate =216]
      \draw[line width=4, gray] (3,0) arc(0:36:3);
     \end{scope}
     
     \begin{scope}[rotate =18]
      \draw[line width=4, gray] (3,0) arc(0:18:3);
     \end{scope}

     \begin{scope}[rotate =54]
      \draw[line width=1.5, black] (3,0) arc(0:54:3);
     \end{scope}
 
%================================================

 \draw[line width = 2] (108:3) -- (234:3);
 \draw[line width = 2] (144:3) -- (54:3);

 \draw (306:3) -- (180:3);
 \draw (198:3) -- (288:3);
 \draw (360:3) -- (270:3);
 
 \draw[line width = 2, dotted] (72:3) -- (342:3);
 \draw[line width = 2, dotted] (324:3) -- (90:3);
 
 \draw[line width = 3, gray] (126:3) -- (216:3);
 \draw[line width = 3, gray] (162:3) -- (36:3);
\draw[line width = 3, gray] (252:3) -- (18:3);
%=============================================================
  
{\foreach \angle/ \label in
   { 90/9, 108/6, 126/1, 144/2, 162/3, 180/4, 198/5, 216/1, 234/6, 252/7, 
    270/8, 288/5, 306/4, 324/9, 342/10, 360/8, 18/7, 36/3,   54/2, 72/10
   }
   {
    \fill(\angle:3.5) node{$\label$};
    \fill(\angle:3) circle (3pt) ;
    }
}
  \end{scope}
 \end{tikzpicture}
 \caption{The contours and the corresponding coloring chords. Thus for every diagram we got a contour (the gray one) that does not divide it. Indeed, we have chords (dashed ones) with endpoints are in arcs with different colors. It follows that these diagrams are not realizable.}
 \label{Fig.8}
\end{figure}
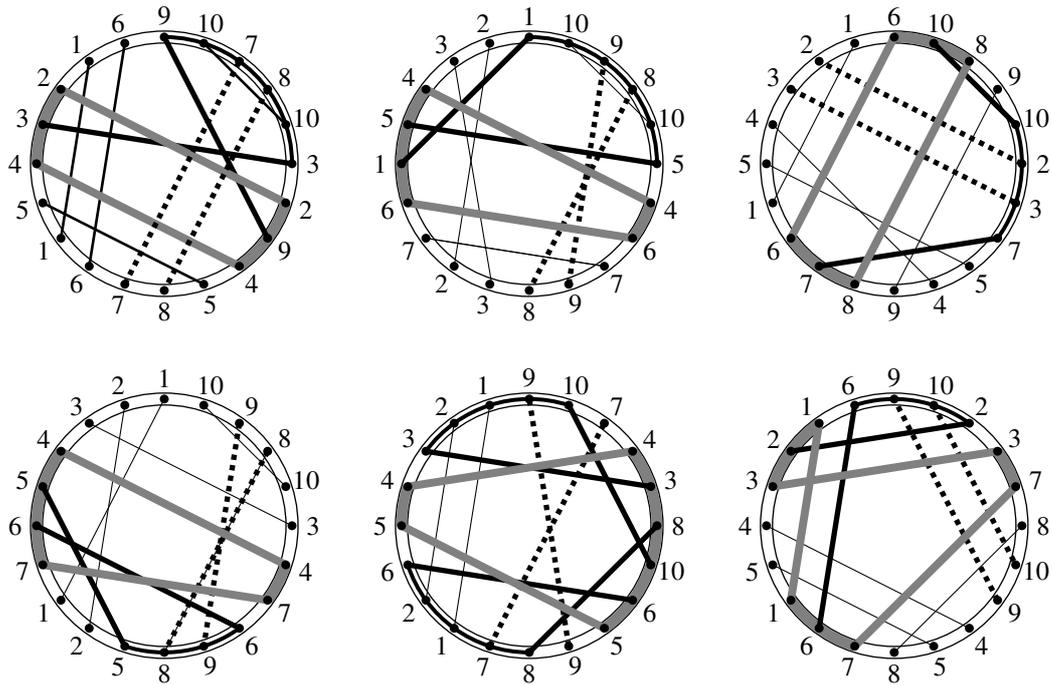
%===============================THE END OF FIGURE 8 =======================================================================

\section{Graphs corresponding to meanders}

In this section we consider the STZ-conditions when an adjacency matrix $M$ of a Gauss diagram is idempotent, i.e., $M^2= M$, which happens to be a special class of closed curves related to constructions known as meanders.

First of all we give an example of such matrices.

\begin{example}\label{meander1}
 Let us consider the following Gauss diagram
 
\begin{figure}[h!]
    \centering
    \begin{tikzpicture}[scale = 0.5]
            {\foreach \angle/ \label in
       {180/0, 155/1, 130/2,  105/3, 80/4, 55/5, 30/6, 0/0,
       330/1, 305/4, 280/3, 255/2, 230/5, 210/6
        }
     {
        \fill(\angle:3.5) node{$\label$};
        \fill(\angle:3) circle (4pt) ;
      }
    }
    
    \draw[line width = 2] (0,0) circle (3);
    
    \draw (0:3) -- (180:3);
    \draw (155:3) -- (330:3);
    \draw (130:3) -- (255:3);
    \draw (105:3) -- (280:3);
    \draw (80:3) -- (305:3);
    \draw (55:3) -- (230:3);
    \draw (30:3) -- (210:3);
    \end{tikzpicture}
\end{figure}
its adjacency matrix has the following form
 \[
  M = \begin{pmatrix} 0 & 1 & 1 & 1 & 1 & 1 & 1 \\
                      1 & 0 & 1 & 1 & 1 & 1 & 1 \\
                      1 & 1 & 0 & 0 & 0 & 1 & 1 \\
                      1 & 1 & 0 & 0 & 0 & 1 & 1 \\
                      1 & 1 & 0 & 0 & 0 & 1 & 1 \\
                      1 & 1 & 1 & 1 & 1 & 0 & 1 \\
                      1 & 1 & 1 & 1 & 1 & 1 & 0 
  \end{pmatrix}
 \]
 
By the straightforward verification it is easy to see that $M^2 = M$. The corresponding plane curve and graph $I_G$ are pictured below.

\begin{figure}[h!]
  \begin{center}
  \begin{tikzpicture}[scale=0.7]
  \begin{scope}[xshift=-3cm, line width = 2]
     \draw[name path =o] (0,0) to [out = 300, in = 200] (4,-0.5) to [out=20, in = 270] (5, 0.3) to [out = 90, in = 60] (0,0);
      \draw (0,0) to [out = 220, in = 200] (0.5,-1);
      \draw[name path = a] (0.5,-1) to [out = 20, in = 180] (3,0.5);
      \draw[name path =b] (3,0.5) to [out = 0, in = 30] (4.5,-1);
      \draw[name path =c] (4.5,-1) to [out = 210, in = 0] (2.7, -0.2);
      \draw[name path = d] (2.7, -0.2) to [out =  180, in = 90] (2,-1) to [out = 270, in = 270] (6,0) to [out = 90, in = 30] (4,2);
      \draw[name path =e] (4,2) to [out = 210, in = 0] (2,1);
      \draw[name path =f] (2,1) to [out = 180, in = 0] (0,1.5);
      \draw (0,1.5) to [out = 180, in = 120] (0,0);
     
     \fill [name intersections={of=o and a, by={A}}]
       (A) circle (4pt) node[below] {$1$};  
     
     \fill [name intersections={of=o and b, by={B}}]
       (B) circle (4pt) node[right] {$4$};
       
     \fill [name intersections={of=o and c, by={C}}]
       (C) circle (4pt) node[below] {$3$};
       
     \fill [name intersections={of=o and d, by={D}}]
       (D) circle (4pt) node[below left] {$2$}; 
      
     \fill [name intersections={of=o and e, by={E}}]
       (E) circle (4pt) node[above] {$5$};
       
     \fill [name intersections={of=o and f, by={F}}]
       (F) circle (4pt) node[above] {$6$}; 
     
       \fill (0,0) circle (4pt);
       \node[left] at (0,0){$0$};
    \end{scope}
 %============================================================================   
    \begin{scope}[xshift = 7cm, scale = 0.7]
       {\foreach \angle/ \label in
       {180/0, 155/1, 130/2,  105/3, 80/4, 55/5, 30/6, 0/0,
       330/1, 305/4, 280/3, 255/2, 230/5, 210/6
        }
     {
        \fill(\angle:3.5) node{$\label$};
        \fill(\angle:3) circle (4pt) ;
      }
    }
    
    \draw[line width = 2] (0,0) circle (3);
    
    \draw (0:3) -- (180:3);
    \draw (155:3) -- (330:3);
    \draw (130:3) -- (255:3);
    \draw (105:3) -- (280:3);
    \draw (80:3) -- (305:3);
    \draw (55:3) -- (230:3);
    \draw (30:3) -- (210:3);
  \end{scope}
 %----------------------------------------------------------------------------- 
  \begin{scope}[xshift = 13cm,scale =0.7]
          {\foreach \angle/ \label in
       { 90/2, 30/3, 330/4, 270/5, 210/6, 150/1
        }
     {
        \fill(\angle:3.5) node{$\label$};
        \fill(\angle:3) circle (5pt) ;
        
        \draw (65:6) -- (\angle:3);
        \draw (150:3) -- (\angle:3);
              }
    }
    
    \draw (90:3) -- (270:3);
    \draw (90:3) -- (210:3);
    
    \draw (30:3) -- (270:3);
    \draw (30:3) -- (210:3);
    
    \draw (330:3) -- (270:3);
    \draw (330:3) -- (210:3);
    
    \draw (270:3) -- (210:3);
    
    \fill(65:6.5) node[left] {$0$};
    \fill(65:6) circle (5pt);
  \end{scope}
  
   \fill(-2,-3.3) node{$a)$};
  \fill(7,-3.3) node{$b)$};
  \fill(13,-3.3) node{$c)$}; 
\end{tikzpicture}
  \end{center}
  \caption{a) The plane curve, b) the Gauss diagram, c) the graph $I_G.$}\label{meander1}
\end{figure}
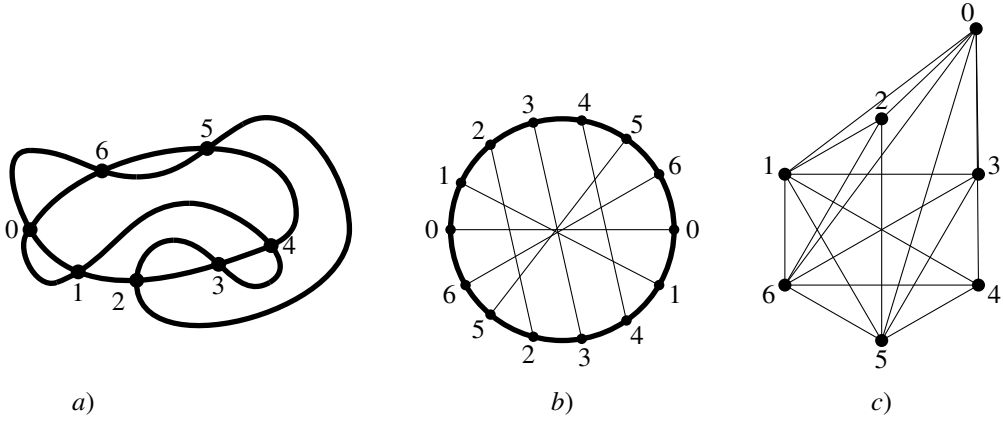
\end{example}

\begin{definition}
 An open meander is a configuration consisting of an oriented simple curve and a segment of a straight line on the plane that cross one another a finite number of times. Two open meanders are equivalent if there is a homeomorphism of the plane that maps one meander to the other.
\end{definition}

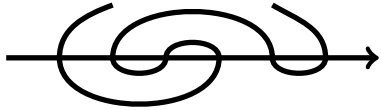
\begin{figure}[h!]
    \centering
     \begin{tikzpicture}[scale = 0.7]
      \draw[->, line width = 2] (0,0) to (7,0);
      \draw[line width =2] (2,1) to [out = 200, in = 90] (1,0);
      \draw[line width =2] (1,0) to [out = 270, in = 270] (4,0);
      \draw[line width =2] (4,0) to [out = 90, in = 90] (3,0);
      \draw[line width =2] (3,0) to [out = 270, in = 270] (2,0);
      \draw[line width =2] (2,0) to [out = 90, in = 90] (5,0);
      \draw[line width =2] (5,0) to [out = 270, in = 270] (6,0);
      \draw[line width =2] (6,0) to [out = 90, in = 330] (5,1);
     \end{tikzpicture}
    \caption{In other words, a meander can be also defines as follows. Take a fixed oriented line $L$ in $\mathbb{R}^2$, a meander of order $n$ (in this case $n = 6)$ is a non-self-intersecting curve in $\mathbb{R}^2$ which transversally intersects the line at $n$ points for some positive integer $n$.}
    \label{meander2}
\end{figure}

It is convenient to draw a meander of order $2n$ so that the straight line is horizontal and oriented from left to right, and the curve is oriented so that at the first (that is, leftmost) intersection of the curve and the straight line it is directed from top to bottom. The intersections allow a natural labelling by the integers $\{1, 2, . . . , 2n\}$, defined by their ordering along the line from the left to the right. These labels also have a cyclic order defined by their order along the closed curve.

\begin{definition}
 This cyclic order, when interpreted as a a full cycle in $\mathfrak{S}_{2n}$ (= the symmetric group), is referred to as a \textit{meandric permutation}. 
\end{definition}

It is clear that by the afore-mentioned conditions of making of meanders there is a one-to-one correspondence between meanders and meanders permutations.

\begin{construction}
 Let $\mathscr{M}$ be a meander with an oriented line $L$ and $\{1,2,\ldots, 2n\}$ the corresponding intersecting points of $\mathscr{M}$ with $L$. Let $L$ has a finite length. Take some points, say $a$, $b$ on $L$ such that $a$ is the beginning point of $L$ and not equal to 1, and $b$ is the end point of $L$ and not equal to $2n$. Next, we fix some orientation of $\mathscr{M}$ and take some points, say $m,m'$ on $M$ such that $m$ be the start point and $m'$ the finish point. 
 
 Further, let us attach two intervals $I =[i_0,i_1], J= [j_1,j_1]$ to the configuration as follows $i_0$ attach to $m$ and $i_1$ to $b$, $j_0$ to $m'$ and $j_1$ to $a$. In the result we get an intersection point of the attached intervals, denote it by $0$. We thus get a closed curve $\widetilde{\mathscr{M}}$.
\end{construction}

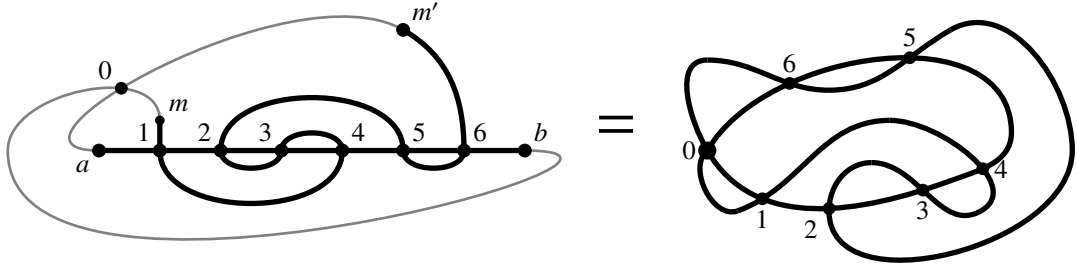
\begin{figure}
    \centering
     \begin{tikzpicture}[scale = 0.8]
      \begin{scope}[xshift = -3cm]
      \draw[line width = 2] (0,0) to (7,0);
      \draw[line width =2] (1,0.5) to [out = 270, in = 90] (1,0);
      \draw[line width =2] (1,0) to [out = 270, in = 270] (4,0);
      \draw[line width =2] (4,0) to [out = 90, in = 90] (3,0);
      \draw[line width =2] (3,0) to [out = 270, in = 270] (2,0);
      \draw[line width =2] (2,0) to [out = 90, in = 90] (5,0);
      \draw[line width =2] (5,0) to [out = 270, in = 270] (6,0);
      \draw[line width =2] (6,0) to [out = 90, in = 330] (5,2);
      
      \draw[line width =1,name path =a, gray] (1,0.5) to [out = 90, in = 90] (-1.5,0) to [out = 270, in = 0] (7,0);
      \draw[line width =1,name path =b,gray] (5,2) to [out = 150, in = 180] (0,0);
      
      \draw [fill](1,0) circle (3pt);
      \draw [fill](2,0) circle (3pt);
      \draw [fill](3,0) circle (3pt);
      \draw [fill](4,0) circle (3pt);
      \draw [fill](5,0) circle (3pt);
      \draw [fill](6,0) circle (3pt);
      
      \node[above left] at (1,0){$1$};
      \node[above left] at (2,0){$2$};
      \node[above left] at (3,0){$3$};
      \node[above right] at (4,0){$4$};
      \node[above right] at (5,0){$5$};
      \node[above right] at (6,0){$6$};
      
      \draw [fill](0,0) circle (3pt);
      \draw [fill](7,0) circle (3pt);
      \draw [fill](1,0.5) circle (2pt);
      \draw [fill](5,2) circle (3pt);
      
      \node[below left] at (0,0){$a$};
      \node[above right] at (7,0){$b$};
      
      \node[above right] at (1,0.5){$m$};
      \node[above right] at (5,2){$m'$};
      
          \fill [name intersections={of=a and b, by={A}}]
       (A) circle (3pt) node[above left] {$0$};   
      \end{scope}
      
      \node[above] at (5.5,0) {\Huge{$=$}};
       
       \begin{scope}[xshift = 7cm,line width = 2]
         \draw[name path =o] (0,0) to [out = 300, in = 200] (4,-0.5) to [out=20, in = 270] (5, 0.3) to [out = 90, in = 60] (0,0);
      \draw (0,0) to [out = 220, in = 200] (0.5,-1);
      \draw[name path = a] (0.5,-1) to [out = 20, in = 180] (3,0.5);
      \draw[name path =b] (3,0.5) to [out = 0, in = 30] (4.5,-1);
      \draw[name path =c] (4.5,-1) to [out = 210, in = 0] (2.7, -0.2);
      \draw[name path = d] (2.7, -0.2) to [out =  180, in = 90] (2,-1) to [out = 270, in = 270] (6,0) to [out = 90, in = 30] (4,2);
      \draw[name path =e] (4,2) to [out = 210, in = 0] (2,1);
      \draw[name path =f] (2,1) to [out = 180, in = 0] (0,1.5);
      \draw (0,1.5) to [out = 180, in = 120] (0,0);
     
     \fill [name intersections={of=o and a, by={A}}]
       (A) circle (3pt) node[below] {$1$};  
     
     \fill [name intersections={of=o and b, by={B}}]
       (B) circle (3pt) node[right] {$4$};
       
     \fill [name intersections={of=o and c, by={C}}]
       (C) circle (3pt) node[below] {$3$};
       
     \fill [name intersections={of=o and d, by={D}}]
       (D) circle (3pt) node[below left] {$2$}; 
      
     \fill [name intersections={of=o and e, by={E}}]
       (E) circle (3pt) node[above] {$5$};
       
     \fill [name intersections={of=o and f, by={F}}]
       (F) circle (3pt) node[above] {$6$}; 
     
       \draw [fill](0,0) circle (3pt);
       \node[left] at (0,0){$0$};
       \end{scope}
   \end{tikzpicture}
    \caption{We make a closed curve with a new intersection point $0$ and a Gauss diagram of this curve we define as a Gauss diagram of the meander.}
    \label{meander3}
\end{figure}

We see that such curves can be also described as plane curves that have a Gauss diagram to have at least one chord crosses all others. Recall that Example \ref{meander1} gives a curve with this property.

We also see that after adding the new crossing $0$ we meet labeled points $1,\ldots, 2n$ by walking on the curve (from top to bottom) in the reverse order than we meet them by walking on the same curve in the same direction but without adding the crossing $0$. 

For any permutation $\pi \in \mathfrak{S}_n$ we consider the following set of pairs $R_\pi \subseteq \{1,\ldots, n\} \times \{1,\ldots, n \}$,
\[
 R_\pi:=\{(i,j)\, | \, i < j \, \& \, \pi(i) > \pi(j)\}.
\]

\begin{remark}\label{R=braids}
 The set $R_\pi$ are called Thurston generators of the braid groups. They are braids with positive crossings and any two strands cross at least one. The elements of $R_\pi$ corresponds to crossing of strings, i.e., if $(i,j) \in R_\pi$ then $i$th and $j$th strands are crossed. 
\end{remark}

\begin{definition}
 Let $\mathscr{M}$ be a meander of an order $n$ and let $\pi$ the corresponding permutation. The meander graph $\Gamma(\mathscr{M}) = (V,E)$ consists of the set of vertices $V = \{0,1,2,\ldots, n\}$ and the set $E: = \bigcup_{1\le i \le n}\{(0,i)\} \cup \neg R_\pi$. 
\end{definition}

\begin{example}
 Let us turn out to the meander pictured in Fig.\ref{meander2}.
 
 \begin{figure}[h!]
    \centering
     \begin{tikzpicture}[scale = 0.7]
      \draw[->, line width = 2] (0,0) to (7,0);
      \draw[line width =2] (2,1) to [out = 200, in = 90] (1,0);
      \draw[line width =2] (1,0) to [out = 270, in = 270] (4,0);
      \draw[line width =2] (4,0) to [out = 90, in = 90] (3,0);
      \draw[line width =2] (3,0) to [out = 270, in = 270] (2,0);
      \draw[line width =2] (2,0) to [out = 90, in = 90] (5,0);
      \draw[line width =2] (5,0) to [out = 270, in = 270] (6,0);
      \draw[line width =2] (6,0) to [out = 90, in = 330] (5,1);
  
     {\foreach \x in
       { 1,2,3,4,5,6
        }
     {
        \fill(\x,0) node[below] {$\x$};
        \fill(\x,0) circle (4pt) ;
              }
    }
\end{tikzpicture}
    \label{meander4}
\end{figure}
 
 We have $n = 6$, $\pi = \begin{pmatrix} 1 & 2 & 3  & 4& 5& 6 \\ 1 & 4 & 3 & 2 & 5 & 6 \end{pmatrix}.$ Hence $R_\pi = \{ (2,3), (2,4), (3,4) \}$, therefore
 \[
  \neg R_{\pi} = \{ (1,2), (1,3), (1,4), (1,5), (1,6), (2,5), (2,6), (3,5), (3,6), (4,5), (4,6), (5,6) \}
 \]
 and we get exactly the same graph $I_G$ as in Fig.\ref{meander1} c).
 
\end{example}

\begin{lemma}\cite[Lemma 9.1.6]{EpThur}\label{criteria}
A set $R$ of pairs $(i,j)$, with $i <j$, comes from some permutation if and only if the following two conditions are satisfied:
\begin{enumerate}
    \item If $(i,j) \in R$ and $(j,k) \in R$, then $(i,k) \in R$.
    \item If $(i,k) \in R$, then $(i,j) \in R$ or $(j,k) \in R$ for every $j$ with $i<j<k$.
\end{enumerate}
\end{lemma}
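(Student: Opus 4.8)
The plan is to recognise $R_\pi$ as the set of \emph{inversions} of $\pi$ (pairs of positions $i<j$ carrying a descent $\pi(i)>\pi(j)$) and to prove the two implications separately. For the ``only if'' direction I would read conditions (1) and (2) directly off the definition of $R_\pi$. For the ``if'' direction I would reconstruct a candidate permutation from $R$ by building a total order on the positions $\{1,\dots,n\}$, and show that this order is well defined \emph{precisely} because (1) and (2) hold; the permutation is then the one that ranks positions according to that order.

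For the ``only if'' direction, suppose $R=R_\pi$. Condition (1): if $(i,j),(j,k)\in R_\pi$ then $i<j<k$ and $\pi(i)>\pi(j)>\pi(k)$, so $\pi(i)>\pi(k)$ and hence $(i,k)\in R_\pi$. Condition (2): if $(i,k)\in R_\pi$ then $\pi(i)>\pi(k)$, and for any $j$ with $i<j<k$ the value $\pi(j)$ is distinct from both; if $\pi(j)<\pi(i)$ then $(i,j)\in R_\pi$, while if $\pi(j)>\pi(i)$ then $\pi(j)>\pi(i)>\pi(k)$ gives $(j,k)\in R_\pi$. Either way one of the two pairs lies in $R$, which is exactly (2). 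This part is routine.

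For the ``if'' direction, define a tournament (a complete antisymmetric relation) $\prec$ on $\{1,\dots,n\}$ by declaring, for every pair $i<j$, that $j\prec i$ when $(i,j)\in R$ and $i\prec j$ otherwise; the intended reading is $a\prec b \iff \pi(a)<\pi(b)$. I would then invoke the classical fact that a tournament is a linear order if and only if it contains no directed $3$-cycle, so it suffices to examine each triple $i<j<k$. Writing out the eight possible memberships of $(i,j),(i,k),(j,k)$ in $R$, a $3$-cycle in $\prec$ occurs in exactly two of them: the configuration $\{(i,k)\in R,\ (i,j)\notin R,\ (j,k)\notin R\}$ and the configuration $\{(i,j)\in R,\ (j,k)\in R,\ (i,k)\notin R\}$. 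The first is forbidden by condition (2) applied to this very $j$, and the second is forbidden by condition (1). Hence $\prec$ has no $3$-cycle and is a genuine linear order.

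With $\prec$ a linear order, let $\pi$ be the unique permutation in $\mathfrak{S}_n$ satisfying $\pi(a)<\pi(b)\iff a\prec b$; unwinding the definition of $\prec$ shows that for $i<j$ we have $\pi(i)>\pi(j)$ exactly when $(i,j)\in R$, i.e.\ $R=R_\pi$, completing the reconstruction. The main obstacle is the backward direction, and within it the crux is the small but essential case analysis identifying the two cyclic tournaments on a triple and matching each to the correct hypothesis; everything else reduces to the standard equivalence between acyclic tournaments and linear orders, which is what lets me check only triples rather than arbitrary chains.
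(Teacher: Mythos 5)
Your proposal is correct, and both directions are sound. The ``only if'' part is the routine inversion-set check, and your ``if'' part correctly reduces everything to the classical fact that a tournament is transitive (hence a linear order) precisely when it has no directed $3$-cycle; your identification of the only two cyclic configurations on a triple $i<j<k$ --- namely $(i,k)\in R$ with $(i,j),(j,k)\notin R$, killed by condition (2), and $(i,j),(j,k)\in R$ with $(i,k)\notin R$, killed by condition (1) --- is exactly right (the other six configurations are easily seen to be transitive). One point of comparison: the paper does not prove this lemma at all; it is quoted verbatim from Epstein et al.\ (cited as Lemma 9.1.6 of that reference), so there is no internal argument to measure yours against. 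Your tournament reconstruction is a clean, self-contained substitute for that citation, and it makes transparent why only triples need to be inspected: the equivalence between acyclic tournaments and linear orders absorbs all longer chains. The only cosmetic caveat is that ``comes from some permutation'' should be read as $R=R_\pi$ for a permutation $\pi$ of the finite ground set $\{1,\dots,n\}$ on which the pairs live, which is indeed how your construction of $\pi$ from the linear order $\prec$ proceeds.
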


 \begin{theorem}{(cf. \cite[Lemma 7.4, Theorem 7.5]{doi:10.1142/S0218216520500315})}\label{meander_matrix=idempotent}
  An adjacency matrix of any meander graph is idempotent.
\end{theorem}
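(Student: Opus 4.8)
The plan is to deduce idempotency of $M$ from the STZ reformulation (Theorem \ref{reformSTZ}) together with the single structural feature that distinguishes meander graphs: the vertex $0$ is \emph{universal}, being joined to every other vertex by the edges $(0,i)$ in the definition of $\Gamma(\mathscr{M})$. First I would record that a meander graph is realizable: the Construction above turns the meander into an honest closed plane curve $\widetilde{\mathscr{M}}$, and $\Gamma(\mathscr{M})$ is precisely the interlacement graph of its Gauss diagram. Hence the first parity condition (Proposition \ref{prop:first-parity}) holds, so every vertex has even degree, and by Theorem \ref{reformSTZ} the associated system $(\alpha_i+\alpha_j)m_{i,j}=\langle m_i,m_j\rangle + m_{i,j}$ over the vertices $0,1,\dots,n$ has a solution over $GF(2)$.

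The crux is to show that any such solution is constant. Fix $j\ge 1$. Since $0$ is universal, its row $m_0$ has a $1$ in every position except the diagonal, so
\[
\langle m_0,m_j\rangle = \sum_{k\neq 0} m_{j,k} = \deg(j)-1 \equiv 1 \pmod 2,
\]
where the last congruence uses that $\deg(j)$ is even by the first parity condition. The equation attached to the adjacent pair $(0,j)$ (where $m_{0,j}=1$) therefore reads $\alpha_0+\alpha_j=\langle m_0,m_j\rangle+1=0$, forcing $\alpha_j=\alpha_0$. As $j$ was arbitrary, all $\alpha_i$ coincide with a single value $c\in GF(2)$, so the associated diagonal matrix is the scalar matrix $\Lambda=cI$.

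Finally I would feed this back into the STZ identity $(M+\Lambda)^2=M+\Lambda$. Because $\Lambda=cI$ is scalar, the cross terms cancel over $GF(2)$: one has $M\Lambda+\Lambda M=2cM=0$ and $\Lambda^2=\Lambda$, so the identity collapses to $M^2+\Lambda=M+\Lambda$, that is $M^2=M$. The main obstacle in this argument is exactly the step forcing the colouring $(\alpha_i)$ to be constant; everything hinges on the universal vertex, which removes all the gauge freedom normally carried by the STZ diagonal, together with the parity input $\deg(j)\equiv 0$. An alternative but far more laborious route would compute $(M^2)_{i,j}$ directly from the planar point configuration $(k,\pi(k))$ and the inversion set $R_\pi$, using the characterisation of Lemma \ref{criteria}; the universal-vertex argument sidesteps this casework entirely.
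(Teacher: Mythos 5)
Your proof is correct, and it takes a genuinely different route from the paper's, although both pivot on the same key computation: since vertex $0$ is universal, $\langle m_0,m_j\rangle=\deg(j)-1\equiv 1 \pmod{2}$ by \textbf{PC1}. From there the paper argues entrywise: for every edge $(i,j)$ with $i,j\ge 1$ it closes the edge into the triangle $(v_0,v_i,v_j)$ and applies the weighted-cycle criterion (Corollary \ref{cor_for_graph}) to this $3$-cycle, obtaining $\langle m_i,m_j\rangle\equiv 1=m_{i,j}$; combining this with the pairs through $0$ (and, implicitly, \textbf{PC2} for non-adjacent pairs) it concludes $M^2=M$ entry by entry. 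You instead stay with the linear system of Theorem \ref{reformSTZ}: the equations attached to the edges $(0,j)$ force any solution $(\alpha_i)$ to be constant, so the STZ diagonal is the scalar matrix $cI$, and then $(M+cI)^2=M+cI$ collapses to $M^2=M$ by characteristic-$2$ algebra. Your version buys a few things: it needs no cycle corollary and no entry-by-entry case analysis; the non-adjacent pairs and the diagonal (i.e.\ \textbf{PC2} and \textbf{PC1} for the conclusion), which the paper handles tacitly, are absorbed automatically because their equations are part of the solvable system; and it isolates the conceptual point that the universal vertex removes all the gauge freedom normally present in the STZ diagonal. The paper's version, for its part, works directly with inner products and displays the triangle structure explicitly; and since Corollary \ref{cor_for_graph} is itself derived from the same linear system, the two arguments are ultimately cousins --- summing your edge equations around the triangle $(0,i,j)$ reproduces exactly the parity identity the paper extracts from the cycle criterion.
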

\begin{proof}
 Indeed, we have $m_{0,i} = 1$ for any $1 \le i \le n$ where we have set that $M=(m_{i,j})_{0\le i,j \le n}$ is a matrix of a meander graph. Let $m_{i,j} = 1$ for some $1\le i,j \le n$ then the graph contains a cycle with three edges $(v_0,v_i), (v_i,v_j), (v_j,v_0)$ and by Corollary \ref{cor_for_graph}, $\langle m_0,m_i \rangle + \langle m_i, m_j \rangle + \langle m_0, m_j \rangle \equiv 1 \bmod{2}$. Since $\langle m_0, m_k \rangle  =1 + \langle m_k, m_k \rangle \equiv 1 \bmod{2}$ because of $\langle m_k, m_k \rangle \equiv 0 \bmod{2}$ for any $1 \le k \le n$. Therefore $\langle m_i, m_j \rangle \equiv 1 \bmod{2}$. We thus get $\langle m_i, m_j \rangle \equiv m_{i,j} \bmod{2}$ for any $0 \le i,j \le n$, i.e., $M^2 = M$ as claimed.
\end{proof}

\begin{theorem}
 Let $\Gamma = (V,E)$ be a finite graph with $V = \{0,1,\ldots, 2n\}$, $n >0$. The finite graph $\Gamma = (V,E)$ is meander graph if and only if the following conditions hold:
 \begin{enumerate}
     \item $(0,i) \in E$ for any $1\le i \le n$
     \item if $(i,j) \in E$ then either $(i,k) \in E$ or $(k,j) \in E$ for all $i < k < j$
     \item if $(i,j)\in E$ and $(j,k) \in E$ then $(i,k) \in E$ for any $0 \le i,j, k \le 2n$
     \item its adjacency matrix $M$ is idempotent.
 \end{enumerate}
\end{theorem}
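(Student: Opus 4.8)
The plan is to prove the two implications separately: conditions~(2)--(3) encode, via the Epstein--Thurston criterion (Lemma~\ref{criteria}), that the non-zero part of $\Gamma$ comes from a permutation, while the idempotency condition~(4) is what upgrades that permutation to a genuinely meandric one. I read condition~(1) as saying that $0$ is a universal vertex, adjacent to every other vertex, and I treat every pair $(i,j)$ as ordered with $i<j$, so that $0$, being the least label, can occur only as a first coordinate.

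\emph{Forward direction.} Suppose $\Gamma=\Gamma(\mathscr{M})$ for a meander $\mathscr{M}$ with meandric permutation $\pi$. Condition~(1) is immediate, since $\bigcup_i\{(0,i)\}\subseteq E$ by the definition of the meander graph. For~(2) and~(3), any triple containing $0$ has $0$ as its least element and is handled trivially by~(1); on the non-zero vertices $E$ coincides with $\neg R_\pi$, and since $R_\pi$ is the inversion set of a permutation it satisfies both Epstein--Thurston conditions of Lemma~\ref{criteria}. Taking contrapositives of those conditions and rewriting them for the complement $\neg R_\pi=E$ produces exactly~(2) and~(3). Condition~(4) is precisely Theorem~\ref{meander_matrix=idempotent}.

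\emph{Converse, recovering the permutation.} Assume (1)--(4). On the non-zero vertices set $R:=\neg E$; the contrapositives of~(2) and~(3) are the two conditions of Lemma~\ref{criteria} for $R$, so there is a permutation $\pi$ with $R=R_\pi$, and~(1) identifies $\Gamma$ with $\bigl(V,\ \bigcup_i\{(0,i)\}\cup\neg R_\pi\bigr)$. I would then realise this graph as an explicit chord diagram $G$ --- the universal chord $0$ together with the closing-up of the arc system of $\pi$, exactly as in the Construction --- and verify directly from~(1)--(3) that its interlacement graph is $\Gamma$. This exhibits $\Gamma$ as a circle graph and fixes $M$ as the adjacency matrix of an actual Gauss diagram.

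\emph{Converse, using idempotency.} Because $M^2=M$ we have $M+M^2=0$ over $GF(2)$, so the linear system of Theorem~\ref{reformSTZ} is solved by setting $\alpha_i=0$ for all $i$; hence $G$ is realizable and is the Gauss diagram of a closed planar curve $C$. Since the chord $0$ crosses every other chord, $C$ is a curve of the type built by the Construction (a curve whose Gauss diagram has a chord meeting all others). Cutting $C$ open at the crossing $0$ --- the inverse of the closing-up move --- unfolds it into a simple curve that crosses a line transversally at the points $1,\dots,2n$, that is, a meander $\mathscr{M}$; reading the cyclic order along the unfolded curve returns the permutation $\pi$, whence $\Gamma=\Gamma(\mathscr{M})$.

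The main obstacle is this last unfolding step. Two things must be checked. First, that opening $C$ at $0$ yields a \emph{simple} (non-self-intersecting) meander curve: the endpoints of the other chords are split by the two endpoints of chord $0$ into two arcs, one for each side of the recovered line, and planarity of $C$ --- guaranteed by~(4) through Theorem~\ref{reformSTZ} --- is exactly what forces the arcs on each side to nest rather than cross, so that conditions~(1)--(3) alone do not suffice. Second, that the meandric permutation read off from $\mathscr{M}$ agrees with the $\pi$ extracted combinatorially from $R=\neg E$; this is a bookkeeping comparison of the order along $C$ with the order along the line, made using the observation that inserting the crossing $0$ reverses the order in which the points $1,\dots,2n$ are visited.
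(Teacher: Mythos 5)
Your proposal is correct, and its skeleton agrees with the paper's: both directions rest on Lemma \ref{criteria} to convert conditions (2)--(3) into a permutation, and on idempotency together with the STZ machinery to control realizability. Where you genuinely part ways with the paper is the decisive last step. The paper's own proof is a three-line reduction: it observes that it suffices to produce a meandric permutation, restates conditions (1)--(4) as six conditions on the adjacency matrix, and then closes by citing the STZ-conditions, Theorem \ref{meander_matrix=idempotent}, and Proposition 6.3 of \cite{doi:10.1142/S0218216520500315} --- so the geometric fact that a realizable diagram with a chord crossing all others is the closed-up diagram of a meander is outsourced to an external reference. You instead prove that fact directly: from $M^2=M$ the linear system of Theorem \ref{reformSTZ} is solved by $\alpha\equiv 0$ (and the diagonal equations force $\langle m_i,m_i\rangle=(M^2)_{ii}=m_{ii}=0$, so \textbf{PC1} comes for free), hence the chord diagram built from $\pi$ is realized by a closed planar curve $C$; condition (1) guarantees that every crossing other than $0$ involves both of the two arcs into which cutting at the crossing $0$ splits $C$, so each arc is simple, and straightening one of them by a plane homeomorphism recovers an open meander whose permutation is the $\pi$ extracted from $\neg E$. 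What your route buys is self-containedness --- every ingredient is proved inside the paper (Lemma \ref{criteria}, Theorem \ref{reformSTZ}, Theorem \ref{meander_matrix=idempotent}, the Construction) --- at the cost of the two bookkeeping checks you honestly flag (simplicity of the unfolded curve, and the order-reversal comparison of the recovered meandric permutation with $\pi$); both go through essentially as you sketch them. The paper's route is shorter but cannot be verified without the external proposition, and it never makes explicit the unfolding argument that your version supplies.
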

\begin{proof}
 Since a meandric permutation defines an open meander up to homemomorphism it is enough to prove that such graphs defines a meandric permutation.
 
 Next, let us reformulate the conditions of the Theorem in term of adjacency matrices.
 
 Any matrix ${M} = ({m}_{ij}) \in \mathrm{Mat}_{n}(GF(2))$  is an adjacency matrix of a meander graph if and only if the following conditions hold:
  \begin{itemize}
    \item[(1)] its main diagonal contains only $0$,
    \item[(2)] ${M}$ is symmetric,
    \item[(3)] it has at least one string ${m}_i = ({m}_{i1}, \ldots, {m}_{i,n})$ with ${m}_{ik} = 1$ for every $1 \le k\le n$, $k \ne i$,
    \item[(4)] ${M}$ satisfies the STZ-conditions,
    \item[(5)] if ${m}_{pq}=1$ and ${m}_{qr} = 1$, then ${m}_{pr}=1$ for all $1 \le p,q,r \le n$,
    \item[(6)] if ${m}_{st}=1$, then either ${m}_{sj}=1$ or ${m}_{jt}=1$ for every $j$ with $1 \le s<j<t \le n$.
 \end{itemize}
 
 By the STZ-contions, \cite[Proposition 6.3]{doi:10.1142/S0218216520500315}, and Theorem \ref{meander_matrix=idempotent} the statement follows.
\end{proof}

\begin{remark}
 Just for convenience we omit the vertex $0$ in a meander graph.
\end{remark}

\begin{corollary}[{cf. meanders construction algorithm \cite[p. 22]{doi:10.1142/S0218216520500315}}]
  We get the following algorithm to construct meander graphs 
  \begin{itemize}
    \item[\sf{REQUIRE}] $N \equiv 0 \bmod{2}$, $V = V_0\cup V_1$, $V_0=\{2,4,\ldots, N\}$, $V_1 = \{1,3, \ldots, N-1\}$
    \item[\sf{ENSURE}] $(n_1,\ldots, n_N) \in V$
    \item[\bf{1.}] $i:=1$
    \item[{\bf{2.}}] Choose a vertex $n_i \in V_i$
    \item[\bf{3.}] Colour the $n_i$
    \item[\bf{4.}] Join the vertex $n$ with any vertex $n'$ if $n' >n$
    \item[\bf{5.}] \textsf{IF} the vertex $n$ has an odd number of all its neighbours \textsf{THEN GOTO} \textbf{6.} \textsf{ELSE END}
    \item[\bf{6.}] \textsf{IF} for any two coloured $n,m$ a number of its common neighbours is odd \textsf{THEN GOTO} \textbf{7.} \textsf{ELSE END}
    \item[\bf{7.}] \textsf{PRINT} $n_i$
    \item[\bf{8.}] $V_i:=V_i \setminus \{n_i\}$
    \item[\bf{9.}] $i:=i+1\bmod{2}$
    \item[\bf{10.}] \textsf{GOTO} \textbf{2.}
    \item[\bf{11.}] \textsf{END}
    \end{itemize}
\end{corollary}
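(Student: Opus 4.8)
The plan is to obtain this as a direct consequence of the preceding characterization of meander graphs, by reading the algorithm as an incremental procedure that maintains, as loop invariants, exactly the defining conditions (1)--(4) of that theorem. Following the concluding remark, the vertex $0$ is suppressed, so the algorithm operates on the reduced graph on $\{1,\dots,N\}$; I would first make precise how each condition on the full graph translates into a condition on this reduced graph. Since condition (1) forces $0$ to be adjacent to every other vertex, restoring $0$ increases each vertex degree by one and each pairwise common-neighbour count by one. Consequently the parity test of step 5 (``the vertex has an odd number of neighbours'') corresponds, after restoring vertex $0$, to the even-degree condition \textbf{PC1} for the full graph, and the test of step 6 (on common neighbours of coloured vertices) corresponds, via Corollary \ref{cor_for_graph} and Theorem \ref{meander_matrix=idempotent}, to the idempotency condition $M^2=M$, i.e.\ condition (4). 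I would make the precise parity bookkeeping of this correspondence explicit, since restoring $0$ flips each count by one.

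Next I would verify that the ordering discipline of the algorithm enforces the remaining conditions (2) and (3). The alternation $i \mapsto i+1 \bmod 2$ between the even set $V_0$ and the odd set $V_1$, together with the rule of step 4 that joins a newly chosen vertex only to vertices with strictly larger labels, builds the edge set as the complement $\neg R_\pi$ of an inversion set. I would show that the transitivity of step-4 edges yields condition (3), and that the interleaving forced by the alternation yields the betweenness condition (2); these are precisely the two Thurston-generator conditions of Lemma \ref{criteria}. Hence the recorded edge set comes from a genuine permutation $\pi \in \mathfrak{S}_n$, and $\pi$ is a meandric permutation by the one-to-one correspondence noted earlier.

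With the translation in place, the corollary reduces to a soundness-and-completeness statement. For soundness I would argue by induction on the number of processed vertices that any output of the algorithm satisfies conditions (1)--(4), the inductive step being exactly the two parity tests of steps 5--6 reinterpreted above, so that the preceding theorem guarantees the result is a meander graph. For completeness I would show that every meander graph arises from some run: given its meandric permutation, one reads off an admissible choice sequence $(n_1,\dots,n_N)$ together with the colouring of step 3, and checks that each prefix passes the tests in steps 5 and 6.

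The hard part will be the completeness direction together with the verification that the \emph{local} parity tests performed at each insertion are faithful to the \emph{global} idempotency condition. Concretely, I must confirm that the greedy tests never reject a choice sequence that does extend to a valid meander graph, and never accept a partial configuration that cannot so extend; this is where the equivalence of the vertex-$0$-suppressed parities with the full STZ/idempotency conditions (Theorem \ref{reformSTZ}, Corollary \ref{cor_of_system}) must be applied at every stage rather than only at the end. Once this invariant is shown to be preserved under a single insertion, both directions follow, and the correctness of the algorithm is established.
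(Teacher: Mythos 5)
First, a point of comparison: the paper gives \emph{no proof at all} of this corollary. It is stated as an immediate consequence of the preceding four-condition characterization of meander graphs together with the remark that vertex $0$ is omitted, and is followed only by a worked example for $N=8$. So your plan, which attempts an actual soundness-and-completeness argument, already goes beyond what the paper supplies, and its skeleton (translate conditions (1)--(4) to the vertex-$0$-suppressed graph; recognise the edges produced by step 4 as $\neg R_\pi$ for the permutation $\pi$ given by the choice order; recover a permutation via Lemma \ref{criteria}; conclude via the characterization theorem and Theorem \ref{meander_matrix=idempotent}) is the right one and matches the paper's implicit justification.

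However, as a proof your proposal has a genuine gap, and it is exactly the step you label ``the hard part'' and then defer: that the \emph{local} tests of steps 5--6, performed on partial graphs, certify the \emph{global} conditions of the theorem. Saying ``once this invariant is shown to be preserved under a single insertion, both directions follow'' is a restatement of the claim, not an argument. The missing idea that makes it work is structural: when a vertex $n_k$ is coloured, the new edges created are incident only to $n_k$ and to still-uncoloured vertices of larger label, so the neighbourhood of every already-coloured vertex is \emph{final} at the moment it is tested; hence degrees and common-neighbour counts involving coloured vertices never change afterwards, the tests of steps 5--6 are stable, and a completed run satisfies condition (4) globally (soundness), while running the algorithm along the meandric permutation of a given meander graph passes every test because the final graph does (completeness). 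Without this observation your ``greedy tests never reject/never accept'' worry cannot be resolved. Separately, you misattribute the roles of the two mechanisms: conditions (2) and (3) hold for \emph{any} choice order, since step 4 gives an edge $(a,b)$, $a<b$, exactly when $a$ is chosen before $b$, and ``chosen before'' trivially yields both transitivity and betweenness; the alternation between $V_0$ and $V_1$ does not produce condition (2), but rather enforces the parity alternation characteristic of meandric permutations (non-crossing arcs above or below the line join points of opposite parity), which is what makes the printed sequence $(n_1,\dots,n_N)$ a candidate meandric permutation. Finally, note that the parity bookkeeping you promise is not uniform: after deleting vertex $0$, \emph{adjacent} coloured pairs must have an \emph{even} number of common neighbours and \emph{non-adjacent} coloured pairs an \emph{odd} number, a case split the literal wording of step 6 (and your sketch) glosses over.
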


\begin{example}
  Let $N=8$, then $V_0=\{2,4,6,8\}$ and $V_1=\{1,3,5,7\}$.
  \begin{enumerate}
      \item Picture them as it shown in Fig.\ref{meander5}.1).
      
      \item Let us choose vertex $3$ we then have to join it with vertices $4,5,6,7,8$ and colour it (we draw a circle around it instead of colouring) we get Fig.\ref{meander5}.2).
      
      \item It is clear that vertex $3$ has odd number of neighbours.
      We have to choose a vertex with even number, say, $6$. We then have to colour it and joint with $7,8$, we obtain Fig.\ref{meander5}.3).
      
      \item  We see that common numbers of neighbours for $6$ is odd. We have to choose a vertex with an odd number, say $7$. We then get Fig.\ref{meander5}.4).
      
      \item Again, the oddness conditions hold. We have to choose a vertex with an even number. Let us choose a vertex $4$ we then get Fig.\ref{meander5}.5).
      
      \item  We see that, for instance, vertex $6$ has an even number of neighbours it follows that we cannot choose vertex $4$ on this step. Let us choose vertex $2$, we then have to join it with vertices $4,5,8$, we get Fig.\ref{meander5}.6). It is easy to verify that the oddness conditions hold.
      
      \item We have to chose a vertex with an odd number. It is easy to see that we have only one choice -- vertex $1.$ Indeed, if we chose vertex $5$ we get Fig.\ref{meander5}.7). We see that vertices $5$, $6$ have an even number of common neighbours; $3,8$, i.e., we cannot chose vertex $5$ on this step.
      
      \item Choose vertex $1$ and we thus get Fig.\ref{meander5}.8).
      
      \item We have to chose a vertex with an even number. If we chose vertex $8$ we get Fig.\ref{meander5}.9). We see that vertices $4,6$ have an even number of common neighbours; $3,8$, i.e., we cannon chose a vertex $4$ on this step and thus we chose vertex $8$. We obtain Fig.\ref{meander5}.10).
      
      \item Thus we must choose $5$ and $4$ and we then get the following meandric permutation
         \[
           \pi = \begin{pmatrix} 1 &2&3&4&5&6&7&8\\3&6&7&2&1&8&5&4 \end{pmatrix}
         \]
and the corresponding meander has the following form as it shown if Fig.\ref{meander6}.
  \end{enumerate}

 \begin{figure}[h!]
     \centering
     \begin{tikzpicture}[scale = 0.41]
    \begin{scope}[xshift = -10cm]
     {\foreach \angle/ \label in
       { 90/1, 45/2, 0/3, 315/4, 270/5, 225/6, 180/7, 135/8
        }
     {
        \fill(\angle:3.5) node{$\label$};
        \fill(\angle:3) circle (5pt) ;
      }
    } 
    \node[below] at (270:5){1)};
    \end{scope}    
%----------------------------------------------------------------------    
    \begin{scope}[xshift=0cm]
     {\foreach \angle/ \label in
       { 90/1, 45/2, 0/3, 315/4, 270/5, 225/6, 180/7, 135/8
        }
     {
        \fill(\angle:3.5) node{$\label$};
        \fill(\angle:3) circle (5pt) ;
      }
    }
    \draw (0:3) -- (315:3);
    \draw (0:3) -- (270:3);
    \draw (0:3) -- (225:3);
    \draw (0:3) -- (180:3);
    \draw (0:3) -- (135:3);
    
    \draw(0:3) circle(10pt);
    
    \node[below] at (270:5){2)};
    \end{scope}
%----------------------------------------------------------------------------
   \begin{scope}[xshift = 10cm]
      {\foreach \angle/ \label in
       { 90/1, 45/2, 0/3, 315/4, 270/5, 225/6, 180/7, 135/8
        }
     {
        \fill(\angle:3.5) node{$\label$};
        \fill(\angle:3) circle (5pt) ;
      }
    }
    \draw (0:3) -- (315:3);
    \draw (0:3) -- (270:3);
    \draw (0:3) -- (225:3);
    \draw (0:3) -- (180:3);
    \draw (0:3) -- (135:3);
    
    \draw (225:3) -- (180:3);
    \draw (225:3) -- (135:3);

    \draw(0:3) circle(10pt);
    \draw(225:3) circle (10pt);
     \node[below] at (270:5){3)};
   \end{scope}
%-------------------------------------------------------------------------------------
   \begin{scope}[xshift = -10cm, yshift = -12 cm]
      {\foreach \angle/ \label in
       { 90/1, 45/2, 0/3, 315/4, 270/5, 225/6, 180/7, 135/8
        }
     {
        \fill(\angle:3.5) node{$\label$};
        \fill(\angle:3) circle (5pt) ;
      }
    }
    \draw (0:3) -- (315:3);
    \draw (0:3) -- (270:3);
    \draw (0:3) -- (225:3);
    \draw (0:3) -- (180:3);
    \draw (0:3) -- (135:3);
    
    \draw (225:3) -- (180:3);
    \draw (225:3) -- (135:3);
    
    \draw(180:3) -- (135:3);
    
    \draw(0:3) circle(10pt);
    \draw(225:3) circle (10pt);
    \draw(180:3) circle (10pt);
       \node[below] at (270:5){4)};
   \end{scope}
%-------------------------------------------------------------------------------------------------
   \begin{scope}[yshift = -12cm]
      {\foreach \angle/ \label in
       { 90/1, 45/2, 0/3, 315/4, 270/5, 225/6, 180/7, 135/8
        }
     {
        \fill(\angle:3.5) node{$\label$};
        \fill(\angle:3) circle (5pt) ;
      }
    }
    \draw (0:3) -- (315:3);
    \draw (0:3) -- (270:3);
    \draw (0:3) -- (225:3);
    \draw (0:3) -- (180:3);
    \draw (0:3) -- (135:3);
    
    \draw (225:3) -- (180:3);
    \draw (225:3) -- (135:3);
    
    \draw(180:3) -- (135:3);
    
    \draw(315:3) -- (270:3);
    \draw(315:3) -- (225:3);
    \draw(315:3) -- (180:3);
    \draw(315:3) -- (135:3);

    \draw(0:3) circle(10pt);
    \draw(225:3) circle (10pt);
    \draw(180:3) circle (10pt);
    \draw(315:3) circle (10pt);
    
   \node[below] at (270:5){5)};
   \end{scope}
%----------------------------------------------------------------------------------------------
   \begin{scope}[xshift = 10cm, yshift = -12cm]
       {\foreach \angle/ \label in
       { 90/1, 45/2, 0/3, 315/4, 270/5, 225/6, 180/7, 135/8
        }
     {
        \fill(\angle:3.5) node{$\label$};
        \fill(\angle:3) circle (5pt) ;
      }
    }
    \draw (0:3) -- (315:3);
    \draw (0:3) -- (270:3);
    \draw (0:3) -- (225:3);
    \draw (0:3) -- (180:3);
    \draw (0:3) -- (135:3);
    
    \draw (225:3) -- (180:3);
    \draw (225:3) -- (135:3);
    
    \draw(180:3) -- (135:3);
    
    \draw(45:3) -- (315:3);
    \draw(45:3) -- (270:3);
    \draw(45:3) -- (135:3);

    \draw(0:3) circle(10pt);
    \draw(225:3) circle (10pt);
    \draw(180:3) circle (10pt);
    \draw(45:3) circle (10pt);
    
 \node[below] at (270:5){6)};
   \end{scope}
%------------------------------------------------------------------------------
 \begin{scope}[yshift = -24cm, xshift = - 10cm]
     {\foreach \angle/ \label in
       { 90/1, 45/2, 0/3, 315/4, 270/5, 225/6, 180/7, 135/8
        }
     {
        \fill(\angle:3.5) node{$\label$};
        \fill(\angle:3) circle (5pt) ;
      }
    }
    \draw (0:3) -- (315:3);
    \draw (0:3) -- (270:3);
    \draw (0:3) -- (225:3);
    \draw (0:3) -- (180:3);
    \draw (0:3) -- (135:3);
    
    \draw (225:3) -- (180:3);
    \draw (225:3) -- (135:3);
    
    \draw(180:3) -- (135:3);
    
    \draw(45:3) -- (315:3);
    \draw(45:3) -- (270:3);
    \draw(45:3) -- (135:3);
    
    \draw(270:3) -- (135:3);

    \draw(0:3) circle(10pt);
    \draw(225:3) circle (10pt);
    \draw(180:3) circle (10pt);
    \draw(45:3) circle (10pt);
    \draw(270:3) circle (10pt);
    
  \node[below] at (270:5){7)};     
 \end{scope}
%---------------------------------------------------------------------------
  \begin{scope}[yshift = -24cm]
     {\foreach \angle/ \label in
       { 90/1, 45/2, 0/3, 315/4, 270/5, 225/6, 180/7, 135/8
        }
     {
        \fill(\angle:3.5) node{$\label$};
        \fill(\angle:3) circle (5pt) ;
      }
    }
    \draw (0:3) -- (315:3);
    \draw (0:3) -- (270:3);
    \draw (0:3) -- (225:3);
    \draw (0:3) -- (180:3);
    \draw (0:3) -- (135:3);
    
    \draw (225:3) -- (180:3);
    \draw (225:3) -- (135:3);
    
    \draw(180:3) -- (135:3);
    
    \draw(45:3) -- (315:3);
    \draw(45:3) -- (270:3);
    \draw(45:3) -- (135:3);
    
    \draw(90:3) --(315:3);
    \draw(90:3) --(270:3);
    \draw(90:3) --(135:3);

    \draw(0:3) circle(10pt);
    \draw(225:3) circle (10pt);
    \draw(180:3) circle (10pt);
    \draw(45:3) circle (10pt);
    \draw(90:3) circle (10pt);
     \node[below] at (270:5){8)};  
  \end{scope}
%------------------------------------------------------------------------
  \begin{scope}[yshift = - 24cm, xshift = 10cm]
      {\foreach \angle/ \label in
       { 90/1, 45/2, 0/3, 315/4, 270/5, 225/6, 180/7, 135/8
        }
     {
        \fill(\angle:3.5) node{$\label$};
        \fill(\angle:3) circle (5pt) ;
      }
    }
    \draw (0:3) -- (315:3);
    \draw (0:3) -- (270:3);
    \draw (0:3) -- (225:3);
    \draw (0:3) -- (180:3);
    \draw (0:3) -- (135:3);
    
    \draw (225:3) -- (180:3);
    \draw (225:3) -- (135:3);
    
    \draw(180:3) -- (135:3);
    
    \draw(45:3) -- (315:3);
    \draw(45:3) -- (270:3);
    \draw(45:3) -- (135:3);
    
    \draw(90:3) --(315:3);
    \draw(90:3) --(270:3);
    \draw(90:3) --(135:3);
    
    \draw(315:3) --(270:3);
    \draw(315:3) --(135:3);

    \draw(0:3) circle(10pt);
    \draw(225:3) circle (10pt);
    \draw(180:3) circle (10pt);
    \draw(45:3) circle (10pt);
    \draw(90:3) circle (10pt);
    \draw(315:3) circle (10pt);
    
      \node[below] at (270:5){9)};  
  \end{scope}
%-------------------------------------------------------------------------
 \begin{scope}[yshift = -36cm]
      {\foreach \angle/ \label in
       { 90/1, 45/2, 0/3, 315/4, 270/5, 225/6, 180/7, 135/8
        }
     {
        \fill(\angle:3.5) node{$\label$};
        \fill(\angle:3) circle (5pt) ;
      }
    }
    \draw (0:3) -- (315:3);
    \draw (0:3) -- (270:3);
    \draw (0:3) -- (225:3);
    \draw (0:3) -- (180:3);
    \draw (0:3) -- (135:3);
    
    \draw (225:3) -- (180:3);
    \draw (225:3) -- (135:3);
    
    \draw(180:3) -- (135:3);
    
    \draw(45:3) -- (315:3);
    \draw(45:3) -- (270:3);
    \draw(45:3) -- (135:3);
    
    \draw(90:3) --(315:3);
    \draw(90:3) --(270:3);
    \draw(90:3) --(135:3);

    \draw(0:3) circle(10pt);
    \draw(225:3) circle (10pt);
    \draw(180:3) circle (10pt);
    \draw(45:3) circle (10pt);
    \draw(90:3) circle (10pt);
    \draw(135:3) circle (10pt);
    
    \node[below] at (270:5){10)};
 \end{scope}
 \end{tikzpicture}
 \caption{Steps of making of a meander graph.}\label{meander5}
 \end{figure}
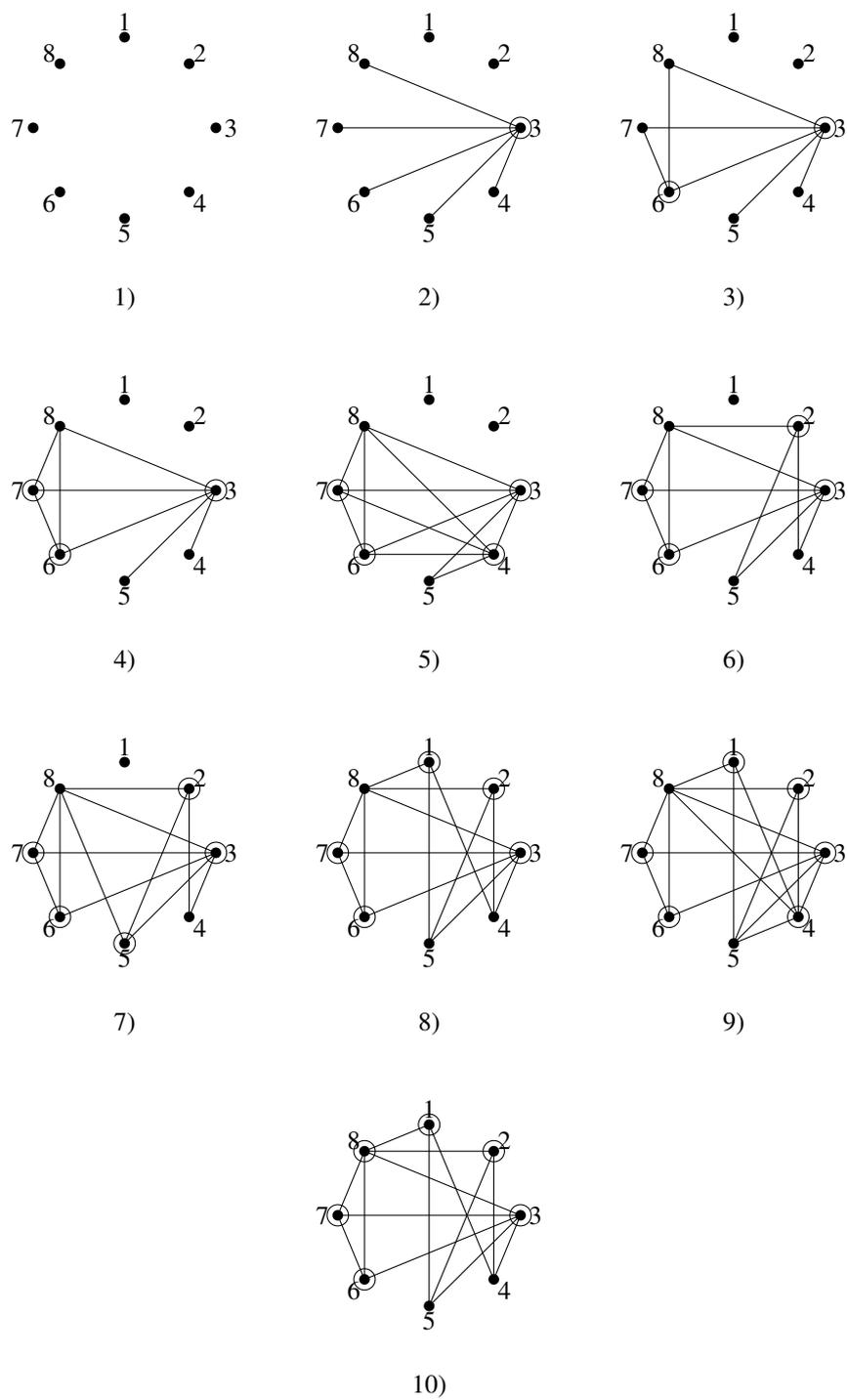

 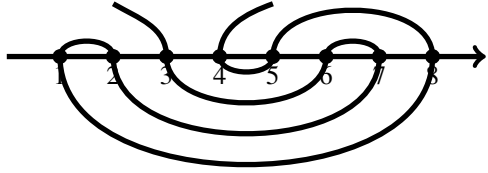
\begin{figure}[h!]
    \centering
     \begin{tikzpicture}[scale = 0.7]
      \draw[->, line width = 2] (0,0) to (9,0);
      \draw[line width =2] (2,1) to [out = 330, in = 90] (3,0);
      \draw[line width =2] (3,0) to [out = 270, in = 270] (6,0);
      \draw[line width =2] (6,0) to [out = 90, in = 90] (7,0);
      \draw[line width =2] (7,0) to [out = 270, in = 270] (2,0);
      \draw[line width =2] (2,0) to [out = 90, in = 90] (1,0);
      \draw[line width =2] (1,0) to [out = 270, in = 270] (8,0);
      \draw[line width =2] (8,0) to [out = 90, in = 90] (5,0);
      \draw[line width =2] (5,0) to [out = 270, in = 270] (4,0);
      \draw[line width =2] (4,0) to [out = 90, in = 200] (5,1);
  
     {\foreach \x in
       { 1,2,3,4,5,6,7,8
        }
     {
        \fill(\x,0) node[below] {$\x$};
        \fill(\x,0) circle (4pt) ;
              }
    }
\end{tikzpicture}
   \caption{The meander corresponds to the permutation $\pi$} \label{meander6}
\end{figure}

\end{example}

\section{Counting Gauss graphs}

By a Gauss graph we mean a circle graph of a realizable Gauss diagram. Recall from the previous section that a meander diagram is a Gauss diagram in which there is a chord intersecting all other chords and, accordingly, a meander graph is a Gauss graph in which there is a vertex adjacent to every other vertex. Table \ref{tab:table3} presents the number of realizable Gauss diagrams and realizable meander diagrams. In Table \ref{tab:table4} we have calculated the numbers of non-isomorphic realizable Gauss graphs (we uploaded it to the OEIS as A343358) and the numbers of non-isomorphic realizable meander graphs for sizes $n=1 \ldots 13$ (we uploaded it to the OEIS as A338660). 

%\AV{This is still to be written. We present the number of Gauss graphs (	1, 1, 2, 3, 7, 18, 41, 123, 361, 1257, 4573) and meander Gauss graphs (1, 2, 5, 13, 43, 167). For the former sequence, we explain why these numbers are these, see https://oeis.org/A343358}

%The table related to meander curves is this:
%
%size	3	5	7	9	11	13
%Number of Gauss diagrams, A264759	1	2	6	23	115	688
%Number of graphs	1	2	5	13	43	167
 %
%he numbers in the last row of this table are now published as %https://oeis.org/A338660

%1 1 2 3 10 27 101 364 1610 7202

\begin{table}[h]
 \centering
    \begin{tabular}{|l|c|c|c|c|c|c|c|c|c|c|c|} % <-- 
   \hline 
   \;\;\;\it{Size}  & 3 & 4 & 5 & 6 & 7 & 8 & 9 & 10 & 11 & 12 & 13
      %& 12 
     \\
      \hline
     Gauss diagrams, A264759  & 1 & 1 & 2 & 3 & 10 & 27 & 101 & 364 & 1610 & 7202 &  34659
      \\ 
      \hline 
      Meander diagrams & 1 &  & 2 & & 6 &  & 23 &  & 115 & & 688  \\
     \hline
    \end{tabular}
   \caption{The number of realizable Gauss diagrams  and meander  diagrams   of sizes  = 3, \ldots, 13}
    \label{tab:table3}
\end{table}

\begin{table}[h]
 \centering

    \begin{tabular}{|l|c|c|c|c|c|c|c|c|c|c|c|} % <-- 
   \hline 
   \;\;\;\it{Size}  & 3 & 4 & 5 & 6 & 7 & 8 & 9 & 10 & 11 & 12 & 13
      %& 12 
     \\
      \hline
     Gauss graphs, A343358  & 1 & 1 & 2 & 3 & 7 & 18 & 41 & 123 & 361 & 1257 & 4573
      \\ 
      \hline 
      Meander graphs, A338660 & 1 &  & 2 & & 5 &  & 13 &  & 43 & & 167  \\
     \hline
    \end{tabular}
   \caption{The number of realizable  Gauss graphs and meander graphs  of sizes  = 3, \ldots, 13}
    \label{tab:table4}
    \end{table}

The calculated sequence A343358 coincides with OEIS sequence  
A002864 (Number of alternating prime knots with n crossings) up to n=10.  For  n=11, 12, 13  the numbers of alternating knots are different from those of Gauss graphs and are equal to  367, 1288, 4878 respectively. This observation can be explained as follows. Firstly, so called \emph{flype} move applied to a chord diagram does not change a graph of the diagram %and any two chord diagrams with %the same graph are related by a sequence of flype moves, see e.g. 
\cite{Soulie2004,chmutov_duzhin_mostovoy_2012}. 
%Furthermore, 
According to \cite{Flype91},  who proved long-standing  Tait conjecture (from 1898),  any two reduced diagrams of an alternating  prime knot are related by a sequence of flype moves. Further to that,  the graphs of the diagrams are preserved by the \emph{mutation} moves, so non-equivalent mutant knots also have isomorphic Gauss graphs of their diagrams \cite{chmutov_duzhin_mostovoy_2012}.   
Thus, the number of non-isomorphic Gauss graphs should 
%be equal  to 
be \emph{no more}  than 
the number of alternating knots modulo equivalence generated by 
%flype
mutation 
moves (mutant knots).  It is well-known that no alternating mutant knots exist up to the size 10 
%\cite{???} 
-- that explains coincidence of A343358 and A002864 up to n=10. Stoimenov's Knot Data Tables\footnote{http://stoimenov.net/stoimeno/homepage/ptab/}    present explicit lists of mutant knots (both alternating and non alternating)  for n = 11\ldots 15. For example, for n=11 there exist 6 pairs of mutant alternating knots (e.g. those related by flype moves) and that explains the case n=11: $367 = 361 + 6$.  For n=12 Knot Data Table lists 27 pairs of alternating mutant knots and 2 \emph{triples} of alternating mutant knots, which again consistent with our results, for $2188 = 1257 + 27 + 2 \times 2$. For size $13$, this manual comparison becomes difficult, but we wrote a script which counts mutant alternating knots in Stoimenov's knot data; it is known that there are $4878$ alternating knots of size $13$, and Stoimenov's list of mutant knots of size $13$ includes $574$ alternating knots in $269$ groups of mutant alternating knots, that is, there are $305$ more alternating knots than their groups, and $4878-305$ is our entry $4573$.
Thus, we not only proposed a new natural sequence (A343358) for OEIS, but contributed for cross validation of both theoretical results and empirically computed data in knot theory. 

%\AL{I beleive the content the following paragraph is already presented above  - see lines 3-9 in the previous paragraph}  
An important theoretical question is whether the number of circle graphs of realizable Gauss diagrams of a given size is always equal to the number of classes of mutant alternating knots of this size, as in examples above, or whether these numbers may diverge for larger values of size. An important theorem \cite[Theorem in Section 4.8.5]{chmutov_duzhin_mostovoy_2012}  states that the circle graphs of two chord diagrams are isomorphic if and only if the two chord diagrams can be transformed into one another by a sequence of moves which are equivalent to knot mutations. Therefore, A343358 is both the sequence of numbers of circle graphs of realizable Gauss diagrams and the sequence of numbers of classes of mutant alternating knots.

\subsection{Remarks on implementation}

The numbers of non-isomorphic Gauss graphs have been computed by 
1) computing all non-equivalent Gauss diagrams using our implementations of the 
%direct metho 
permutation-based algorithm
(up to n=11) and the incremental algorithm (up to n=12), for the case n=13 we have used Tait Curves program by J. Betrema\footnote{  https://github.com/j2b2/TaitCurves}; 
2) translating the results to Gauss codes; 3) applying  an implementation of circle graphs isomorphism procedure by calling Networkx library from Python code.  

\section{Conclusion}
%\section{Conclusion} 
The experimental approach we have applied to studying the  realizability of Gauss diagrams and related objects proved to be fruitful. With the help  of  two novel algorithms for efficient generation of Gauss diagrams we have found the errors in the recently published realizability criteria and proposed corrections. Based on that we further proposed new realizability criteria. We experimentally enumerated various classes of the diagrams and their interlacement graphs cross-validating and expanding on existing knowledge in the field.      

%{\bf TBA} 
\section{Acknowledgments}
The work of the first, second and fourth named authors was supported by the Leverhulme Trust Research Project Grant RPG-2019-313.

\bibliographystyle{alpha}
\bibliography{sample}

\end{document}